\DeclareMathOperator{\arccot}{arccot}
\DeclareMathOperator{\CT}{CT}
\DeclareMathOperator{\Res}{Res}
\DeclareMathOperator{\spann}{span}
\DeclareMathOperator{\sgn}{sgn}
\DeclareMathOperator{\reg}{reg}
\DeclareMathOperator{\SL}{SL}
\DeclareMathOperator{\Mp}{Mp}
\DeclareMathOperator{\Z}{\mathbb{Z}}
\DeclareMathOperator{\R}{\mathbb{R}}
\DeclareMathOperator{\C}{\mathbb{C}}
\DeclareMathOperator{\Q}{\mathbb{Q}}
\renewcommand{\H}{\mathbb{H}}
\renewcommand{\Re}{\text{Re}}
\DeclareMathOperator{\tr}{tr}
\DeclareMathOperator{\calQ}{\mathcal{Q}}
\DeclareMathOperator{\e}{\mathfrak{e}}
	\newtheorem{Satz}{Satz}[section]
	\newtheorem{theorem}[Satz]{Theorem}
	\newtheorem{lemma}[Satz]{Lemma}
	\newtheorem{proposition}[Satz]{Proposition} 
	\newtheorem{corollary}[Satz]{Corollary}
	\theoremstyle{definition} 
	\newtheorem{example}[Satz]{Example}
	\newtheorem{remark}[Satz]{Remark}
\date{\today}
\title[Borcherds lifts of harmonic Maass forms]{Borcherds lifts of harmonic Maass forms and modular integrals}
\address{Mathematical Institute, University of Cologne, Weyertal 86-90, D--50931 Cologne, Germany}
\author{Markus Schwagenscheidt}
\email{mschwage@math.uni-koeln.de}
\thanks{This work is a shortened version of a chapter of my PhD thesis. I am indebted to my advisor Jan Bruinier for proposing the topic of this work to me and for many enlightening discussions. I also thank Kathrin Bringmann, Stephan Ehlen, and Yingkun Li for several helpful discussions on the topic. During the preparation of this work, I was partially supported by the DFG Research Unit FOR 1920 \lq Symmetry, Geometry and Arithmetic\rq, by the LOEWE Reseach Unit USAG, and by the SFB-TRR 191 \lq Symplectic Structures in Geometry, Algebra and Dynamics\rq, funded by the DFG
}
\begin{document}

\begin{abstract}
We extend Borcherds' singular theta lift in signature $(1,2)$ to harmonic Maass forms of weight $1/2$ whose non-holomorphic part is allowed to be of exponential growth at $i\infty$. We determine the singularities of the lift and compute its Fourier expansion. It turns out that the lift is continuous but not differentiable along certain geodesics in the upper half-plane corresponding to the non-holomorphic principal part of the input. As an application, we obtain a generalization to higher level of the weight $2$ modular integral of Duke, Imamoglu and T\'oth. Further, we construct automorphic products associated to harmonic Maass forms.
\end{abstract}

\maketitle

\section{Introduction}

In \cite{borcherds95}, Borcherds constructed a regularized theta lift which maps weakly holomorphic modular forms of weight $1/2$ to real analytic modular functions with logarithmic singularities at CM points. His results were generalized by Bruinier and Ono \cite{bruinieronoheegnerdivisors} to twisted lifts of harmonic Maass forms which map to cusp forms under the $\xi$-operator. In the present work, we extend the twisted Borcherds lift to general harmonic Maass forms (which may map to weakly holomorphic modular forms under the $\xi$-operator). By taking the derivative of the Borcherds lift of a suitable harmonic Maass form of weight $1/2$, we obtain modular integrals of weight $2$ with rational period functions. Their Fourier coefficients are given by twisted traces of geodesic cycle integrals of harmonic Maass forms of weight $0$. They generalize the modular integral of Duke, Imamoglu and T\'oth \cite{dit} to higher level. In the introduction, we restrict to modular forms for the full modular group $\Gamma = \SL_{2}(\Z)$ for simplicity, but in the body of the work we treat modular forms of arbitrary level $\Gamma_{0}(N)$ by using the language of vector valued modular forms for the Weil representation. Let us now describe our results in more detail.

\subsection{The Borcherds lift of a harmonic Maass form} Recall from \cite{bruinierfunke04} that a harmonic Maass form of weight $1/2$ for $\Gamma_{0}(4)$ is a smooth function $f: \H \to \C$ which is annihilated by the invariant Laplace operator $\Delta_{1/2}$, transforms like a modular form of weight $1/2$ for $\Gamma_{0}(4)$, and is at most of linear exponential growth at the cusps of $\Gamma_{0}(4)$. Such a form can be written as a sum $f = f^{+} + f^{-}$ with a holomorphic part $f^{+}$ and a non-holomorphic part $f^{-}$ with Fourier expansions of the shape
\begin{align*}
f^{+}(\tau) &= \sum_{\substack{D \in \Z}}c_{f}^{+}(D)e(D\tau), \\
f^{-}(\tau) &=c_{f}^{-}(0)\sqrt{v} + \sum_{ D < 0}c_{f}^{-}(D)\sqrt{v}\beta_{1/2}(4\pi |D|v)e(D\tau) + \sum_{D > 0}c_{f}^{-}(D)\sqrt{v}\beta_{1/2}^{c}(-4\pi Dv)e(D\tau),
\end{align*}
with $\tau = u + iv \in \H$, $e(x) = e^{2\pi i x}$ for $x \in \C$, coefficients $c_{f}^{\pm}(D) \in \C$, and 
\[
\beta_{1/2}(s) = \int_{1}^{\infty}e^{-st}t^{-1/2}dt, \qquad \beta_{1/2}^{c}(s) = \int_{0}^{1}e^{-st}t^{-1/2}dt.
\]
We let $H_{1/2}$ denote the space of harmonic Maass forms of weight $1/2$ which satisfy the Kohnen plus space condition, which means that the Fourier expansion is supported on indices $D \equiv 0,1 (4)$. The antilinear differential operator
\[
\xi_{1/2}f(\tau) = 2iv^{1/2}\overline{\frac{\partial}{\partial \bar{\tau}}f(\tau)}
\]
maps a harmonic Maass form $f \in H_{1/2}$ of weight $1/2$ to a weakly holomorphic modular form of weight $3/2$. We let $H_{1/2}^{+}$ be the subspace of $H_{1/2}$ consisting of forms which map to cusp forms under $\xi_{1/2}$, and we let $M_{1/2}^{!}$ be the subspace of weakly holomorphic modular forms.

Let $\Delta \in \Z$ be a fundamental discriminant. For simplicity, we assume $\Delta > 1$ in the introduction. Following \cite{borcherds95}, we define the Borcherds lift $\Phi_{\Delta}(z,f)$ of a harmonic Maass form $f \in H_{1/2}$ by the regularized integral
\[
\Phi_{\Delta}(f,z) = \CT_{s = 0}\left[\lim_{T \to \infty}\int_{\mathcal{F}_{T}(4)} f(\tau)\overline{\Theta_{\Delta}(\tau,z)}v^{1/2-s} \frac{du \, dv}{v^{2}}\right],
\]
where $\Theta_{\Delta}(\tau,z)$ is a twisted Siegel theta function which transforms in $\tau$ like a modular form of weight $1/2$ for $\Gamma_{0}(4)$ and is invariant in $z$ under $\Gamma$, $\mathcal{F}_{T}(4)$ denotes a suitably truncated fundamental domain for $\Gamma_{0}(4) \backslash \H$, and $\CT_{s = 0}F(s)$ denotes the constant term in the Laurent expansion at $s =0$ of a function $F(s)$ which is meromorphic near $s = 0$. Borcherds \cite{borcherds} proved that for $\Delta = 1$ and a weakly holomorphic modular form $f\in M_{1/2}^{!}$ the regularized theta lift $\Phi_{\Delta}(f,z)$ defines a $\Gamma$-invariant real analytic function with logarithmic singularities at certain CM points in $\H$, which are determined by the principal part of $f$, i.e., by the coefficients $c_{f}^{+}(D)$ with $D < 0$. Bruinier and Ono \cite{bruinieronoheegnerdivisors} showed that this result remains true for twisted Borcherds lifts of harmonic Maass forms $f \in H_{1/2}$ which map to cusp forms under the $\xi$-operator, which means that $c_{f}^{-}(0) = 0$ and $c_{f}^{-}(D) = 0$ for $D > 0$. One of the main aims of the present work is to generalize the Borcherds lift $\Phi_{\Delta}(z,f)$ to the full space $H_{1/2}$.

For a discriminant $D$ we let let $\calQ_{D}$ be the set of integral binary quadratic forms $Q = [a,b,c]$ of discriminant $D = b^{2}-4ac$. For $D < 0$ and $Q \in \calQ_{D}$ there is an associated CM (or Heegner) point $z_{Q}\in\H$
	which is characterized by $Q(z_{Q},1) = 0$.  For $D > 0$ there is an associated geodesic in $\H$ given by
	\[
	c_{Q} = \{z \in \H: a|z|^{2} + bx + c = 0\},
	\]
	with $z = x+iy \in \H$. We let $H_{\Delta}^{+}(f)$ be the set of all CM points $z_{Q}$ corresponding to quadratic forms $Q \in \calQ_{\Delta D}$ with $D < 0$ such that $c_{f}^{+}(D) \neq 0$, and we let $H_{\Delta}^{-}(f)$ be the union of all geodesics $c_{Q}$ corresponding to quadratic forms $Q \in \calQ_{\Delta D}$ with $D > 0$ such that $c_{f}^{-}(D) \neq 0$. We obtain the following extension of the Borcherds lift on the full space $H_{1/2}$.

\begin{theorem}\label{theorem Borcherds lift introduction}
Let $\Delta > 1$ be a fundamental discriminant. For $f \in H_{1/2}$ the Borcherds lift $\Phi_{\Delta}(f,z)$ defines a $\Gamma$-invariant harmonic function on $\H \setminus (H^{+}_{\Delta}(f) \cup H^{-}_{\Delta}(f))$. It has \lq logarithmic singularities\rq \ at the CM points in $H^{+}_{\Delta}(f)$ and \lq arcsin singularities\rq \ along the geodesics in $H_{\Delta}^{-}(f)$. More precisely, this means that for $z_{0} \in H_{\Delta}^{+}(f) \cup H_{\Delta}^{-}(f)$ the function
\begin{align*}
\Phi_{\Delta}(f,z) &-\sum_{D < 0}c_{f}^{+}(D)\!\!\!\sum_{\substack{Q=[a,b,c] \in \calQ_{\Delta D} \\ z_{0} = z_{Q}}}\chi_{\Delta}(Q)\log|az^{2} + bz + c| \\
		&+\sum_{D > 0}\frac{c_{f}^{-}(D)}{\sqrt{D}}\!\!\!\sum_{\substack{Q= [a,b,c] \in \calQ_{\Delta D} \\ z_{0} \in c_{Q}}}\chi_{\Delta}(Q)\arcsin\left(\frac{1}{\sqrt{1+\frac{1}{\Delta Dy^{2}}(a|z|^{2} + bx + c)^{2}}}\right).
\end{align*}
can be continued to a real analytic function near $z_{0}$. Here $\chi_{\Delta}$ is the usual genus character. Note that all the above sums are finite.
\end{theorem}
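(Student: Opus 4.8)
The plan is to combine the differential equation satisfied by the theta kernel with a local analysis of the regularized integral near the singular locus. First I would insert the Fourier expansions of the holomorphic part $f^+$ and the non-holomorphic part $f^-$ into the regularized integral defining $\Phi_\Delta(f,z)$ and unfold against $\Theta_\Delta(\tau,z)$. Since $\Theta_\Delta$ is a sum over the lattice twisted by the genus character, its contributions organize according to the discriminant $\Delta D$ and the associated quadratic forms $Q=[a,b,c]\in\calQ_{\Delta D}$, which is what produces the character values $\chi_\Delta(Q)$ in the final formula. The $\Gamma$-invariance of $\Phi_\Delta(f,z)$ is then immediate from the $\Gamma$-invariance of $\Theta_\Delta(\tau,z)$ in the variable $z$.

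For the harmonicity I would use the intertwining relation $\Delta_z \Theta_\Delta = c\,\Delta_{1/2}\Theta_\Delta$ for a nonzero constant $c$, where $\Delta_z$ is the weight $0$ hyperbolic Laplacian in $z$ and $\Delta_{1/2}$ is the weight $1/2$ Laplacian in $\tau$. Applying $\Delta_z$ under the integral sign and transferring it onto the $\tau$-variable, two integrations by parts over $\mathcal{F}_T(4)$ move the operator onto $f$. Because $f$ is annihilated by $\Delta_{1/2}$, the interior term vanishes, so that $\Delta_z\Phi_\Delta(f,z)=0$ away from the singular set. The only subtlety is the boundary contribution at the cusps arising from the truncation $\mathcal{F}_T(4)$, which must be shown to be absorbed by the $\CT_{s=0}$-regularization as $T\to\infty$.

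The core of the argument is the local description of the singularities. Fixing $z_0\in H_\Delta^+(f)\cup H_\Delta^-(f)$, I would observe that the Gaussian factor in $\Theta_\Delta(\tau,z)$ decays rapidly for $z$ near $z_0$ except at those finitely many $Q\in\calQ_{\Delta D}$ that become degenerate at $z_0$, namely those with $z_0=z_Q$ (for $D<0$) or $z_0\in c_Q$ (for $D>0$); all remaining lattice vectors contribute a real analytic function near $z_0$. For the holomorphic coefficients $c_f^+(D)$ with $D<0$ the relevant $v$-integral reproduces Borcherds' computation and yields the logarithmic singularity $\chi_\Delta(Q)\log|az^2+bz+c|$. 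For the non-holomorphic coefficients $c_f^-(D)$ with $D>0$, the Whittaker factor $\sqrt{v}\,\beta_{1/2}^c(-4\pi D v)$ is paired against the isolated theta term, and I would evaluate the resulting integral in $v$ explicitly; this is where the new singularity $\chi_\Delta(Q)\arcsin\!\big(1/\sqrt{1+(a|z|^2+bx+c)^2/(\Delta D y^2)}\big)$ must emerge, together with the verification that, on the geodesic $c_Q$ where $a|z|^2+bx+c=0$, the value $\arcsin(1)=\pi/2$ makes the lift continuous while the corner in the argument accounts for the failure of differentiability.

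The main obstacle I anticipate is twofold. First, the exponential growth of $f^-$ at $i\infty$ (coming precisely from the terms $\sqrt{v}\,\beta_{1/2}^c(-4\pi D v)e(D\tau)$ with $D>0$) invalidates the classical convergence underlying Borcherds' regularization, so the meromorphic continuation in $s$ and the extraction of the constant term must be justified with care, presumably by splitting off the growing part and estimating its contribution separately against the decay of the theta kernel. Second, and more substantively, the explicit evaluation of the $v$-integral producing the $\arcsin$ function — including the confirmation of the exact argument $1/\sqrt{1+\cdots}$ and the proof that the remaining terms extend real-analytically across $c_Q$ — is the genuinely new computation and constitutes the technical heart of the theorem.
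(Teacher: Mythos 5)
Your plan coincides with the paper's own proof (Theorem \ref{theorem Borcherds lift} together with its accompanying lemma): convergence of the regularized integral is obtained by inserting the Fourier expansions, integrating over $u$, and taming the exponentially growing non-holomorphic principal part via the estimate $\beta_{1/2}^{c}(4\pi Q_{\Delta}(X)v)\leq 2e^{-4\pi Q_{\Delta}(X)v}$ combined with the strict positivity of $Q_{\Delta}(X_{z})$ away from the geodesics; harmonicity follows exactly as you describe from the intertwining differential equation for the theta kernel and Stokes' theorem; and the arcsin singularity comes from the $v$-integral you isolate, which the paper evaluates by substituting the integral representation of $\beta_{1/2}^{c}$, interchanging the order of integration, and reducing to $\int_{0}^{1}(t-nw^{2})^{-1/2}\,dw=n^{-1/2}\arcsin(\sqrt{n/t})$. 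The two computations you defer as \lq anticipated obstacles\rq\ are precisely the new technical content of the paper's argument, but you identify both of them and the correct method for each, so your proposal is essentially the paper's proof in outline.
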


We refer the reader to Theorem \ref{theorem Borcherds lift} for the general result. 
\begin{remark}The logarithmic singularities imply that the Borcherds lift blows up at the Heegner points $z_{Q} \in H_{\Delta}^{+}(f)$, and the arcsin singularities show that it is continuous but not differentiable at points on the geodesics $c_{Q} \subset H_{\Delta}^{-}(f)$. 
\end{remark}

\subsection{The Fourier expansion of the Borcherds lift} Using Maass-Poincar\'e series one can always write a harmonic Maass form $f \in H_{1/2}$ as $f = f_{1} + f_{2}$ where $f_{1},f_{2} \in H_{1/2}$ satisfy  $c_{f_{1}}^{+}(D) = 0$ for all $D < 0$ and $c_{f_{2}}^{-}(D) = 0$ for all $D \geq 0$. In particular, $f_{2}$ maps to a cusp form under the $\xi$-operator, and since the Borcherds lift of such harmonic Maass forms has already been investigated by Bruinier and Ono \cite{bruinieronoheegnerdivisors}, we assume from now on that $c_{f}^{+}(D) = 0$ for all $D < 0$. In this case, the Borcherds lift $\Phi_{\Delta}(f,z)$ only has singularities along the geodesics in $H_{\Delta}^{-}(f)$. Furthermore, the Fourier expansion of $\Phi_{\Delta}(f,z)$ can be stated as follows.

\begin{proposition}	\label{proposition Fourier expansion introduction}
		Let $\Delta > 1$ be a fundamental discriminant and let $f \in H_{1/2}$ such that $c_{f}^{+}(D) = 0$ for all $D < 0$. Then for $z \in \H \setminus H_{\Delta}^{-}(f)$ the Borcherds lift of $f$ has the Fourier expansion
		\begin{align*}
		\Phi_{\Delta}(f,z) &=
			-4\sum_{m=1}^{\infty}c_{f}^{+}(\Delta m^{2})\sum_{b (\Delta)}\left( \frac{\Delta}{b}\right)\log|1-e(mz + b/\Delta)| \\
			&\quad+\sqrt{\Delta}\, L_{\Delta}(1) \left(2c_{f}^{+}(0) + y\, c_{f}^{-}(0)\right)  \\
			&\quad-4\sum_{D > 0}\frac{c_{f}^{-}(D)}{\sqrt{D}}\sum_{\substack{Q \in \mathcal{Q}_{\Delta D} \\ a > 0}}\chi_{\Delta}(Q)\mathbf{1}_{Q}(z)\left(\arctan\left(\frac{y\sqrt{\Delta D}}{a|z|^{2} + bx + c}\right) + \frac{\pi}{2}\right),
		\end{align*}
			where $\mathbf{1}_{Q}(z)$ denotes the characteristic function of the bounded component of $\H \setminus c_{Q}$, and $L_{\Delta}(s) = \sum_{n \geq 1}\left( \frac{\Delta}{n}\right)n^{-s}$ for $\Re(s)>1$ is a Dirichlet $L$-function.
	\end{proposition}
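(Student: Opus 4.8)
The plan is to evaluate the regularized theta integral by Borcherds' unfolding method \cite{borcherds95}, following the calculations of Bruinier--Ono \cite{bruinieronoheegnerdivisors} for the holomorphic and cuspidal pieces and supplying the new computation for the non-holomorphic part of exponential growth. First I would rewrite the twisted Siegel theta kernel $\Theta_{\Delta}(\tau,z)$ with respect to the isotropic line in the underlying lattice attached to the cusp $\infty$. Choosing a primitive isotropic vector together with a complement and applying Poisson summation in the isotropic direction expresses $\Theta_{\Delta}(\tau,z)$ as a sum over a dual frequency $n \in \Z$, each summand being a rank-one theta series times an additive character $e(n\,\Re(\cdots))$ in the coordinates of $z$. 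The terms with $n \neq 0$ can be unfolded against the stabilizer of the cusp, so that the truncated domain $\mathcal{F}_{T}(4)$ is replaced by the strip $u \in [0,1]$, $v > 0$; the term $n = 0$ stays on $\mathcal{F}_{T}(4)$ and will produce the constant term after the $\CT_{s=0}$ regularization.

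On the unfolded strip I would integrate over $u \in [0,1]$ first; by orthogonality of the additive characters this extracts the Fourier coefficients $c_{f}^{+}(D)$ and $c_{f}^{-}(D)$ of $f$ and leaves, for each index, a one-dimensional integral over $v \in (0,\infty)$ weighted by $\chi_{\Delta}(Q)$ summed over $Q \in \calQ_{\Delta D}$. The computation now splits into three cases. For the holomorphic coefficients $c_{f}^{+}(\Delta m^{2})$ with $m \geq 1$, expanding the logarithm as a geometric series reproduces the terms $\log|1 - e(mz + b/\Delta)|$ exactly as in the classical Borcherds product, the residues $b\,(\Delta)$ and the character $\left(\frac{\Delta}{b}\right)$ coming from the twisted theta coefficients evaluated on the isotropic line. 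The constant coefficients $c_{f}^{+}(0)$ and $c_{f}^{-}(0)$ feed into the $n=0$ term; carrying out the Epstein-type integral and summing the genus character over residue classes yields the Dirichlet $L$-value $\sqrt{\Delta}\,L_{\Delta}(1)$, while the factor $2c_{f}^{+}(0) + y\,c_{f}^{-}(0)$ reflects that $c_{f}^{-}(0)$ multiplies the non-holomorphic constant $c_{f}^{-}(0)\sqrt{v}$, which supplies the extra power of $y$.

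The genuinely new and most delicate part is the contribution of the coefficients $c_{f}^{-}(D)$ with $D > 0$, which carry the factor $\beta_{1/2}^{c}(-4\pi Dv)$ and grow exponentially at $i\infty$, so that the regularization is indispensable. I would first justify, for $\Re(s)$ large, that summation, the $u$-integration and the limit $T \to \infty$ may be interchanged, and that the result continues meromorphically to $s = 0$. Taking $s = 0$ in the end, the heart of the matter is an explicit $v$-integral of the shape
\[
\int_{0}^{\infty}\beta_{1/2}^{c}(-4\pi D v)\,e^{-\kappa\,v}\,v^{-1/2}\,dv, \qquad \kappa = \frac{\pi\,R(z,Q)^{2}}{\Delta D y^{2}},
\]
where $R(z,Q) = a|z|^{2} + bx + c$ measures the distance of $z$ from the geodesic $c_{Q}$. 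Inserting $\beta_{1/2}^{c}(-4\pi Dv) = \int_{0}^{1}e^{4\pi D vt}t^{-1/2}dt$ and performing the $v$-integration reduces this to the elementary integral $\int_{0}^{1}\big(t\,(\kappa - 4\pi D t)\big)^{-1/2}dt$; crucially, the negative argument $-4\pi D$ (with $D > 0$) makes the quadratic under the root indefinite, so that the integral evaluates to an inverse \emph{trigonometric} function (an $\arcsin$) rather than an inverse hyperbolic one. This is the source of the $\arctan$ terms.

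It remains to organize the sum over quadratic forms and to read off $\mathbf{1}_{Q}(z)$. The turning point $t^{\ast} = \kappa/(4\pi D)$ of the elementary integral lies in $(0,1)$ exactly when $z$ lies in the bounded component of $\H \setminus c_{Q}$; the restriction $a > 0$, the characteristic function $\mathbf{1}_{Q}(z)$, and the constant shift $+\tfrac{\pi}{2}$ all emerge from this case distinction and from tracking the sign of $R(z,Q)$ and the orientation in the unfolded integral. As $z$ crosses the geodesic $c_{Q}$, where $R(z,Q) = 0$, the term $\arctan\big(y\sqrt{\Delta D}/R(z,Q)\big)$ jumps by $\pi$, which is exactly the non-differentiability predicted by Theorem~\ref{theorem Borcherds lift introduction}. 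As a final consistency check, the identity $\arcsin(1/\sqrt{1+t^{2}}) = \arctan(1/|t|)$ together with $\arctan(x) = \tfrac{\pi}{2} - \arctan(1/x)$ for $x > 0$ shows that the $\arcsin$ output of the $v$-integral is precisely the stated $\arctan\big(y\sqrt{\Delta D}/R(z,Q)\big) + \tfrac{\pi}{2}$, matching the arcsin singularities of Theorem~\ref{theorem Borcherds lift introduction}. The main obstacle throughout is controlling the regularization for the exponentially growing non-holomorphic part and carrying out the sign bookkeeping that produces $\mathbf{1}_{Q}(z)$.
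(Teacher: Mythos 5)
Your treatment of the first two lines (Poisson summation in the isotropic direction, unfolding against the stabilizer of the cusp, the $u$-integration extracting the coefficients, the $\log|1-e(mz+b/\Delta)|$ terms and the $L_{\Delta}(1)$ constant term) matches the paper's proof of Proposition \ref{proposition Fourier expansion}. The gap is in how you produce the third line. You claim the $\arctan$ terms come out of an explicit $v$-integral on the unfolded strip, but this cannot work for two reasons. First, the unfolding is only justified for $y\gg 0$, and in that range every $\mathbf{1}_{Q}(z)$ with $a>0$ vanishes identically, so no term-by-term evaluation of the unfolded integral can ever see the arctan sum. Second, and more structurally, after Poisson summation in the isotropic direction the theta kernel is rewritten as a Poincar\'e series attached to the one-dimensional sublattice $K$ of diagonal matrices; only the quadratic forms with $a=0$ (vertical geodesics) survive in the unfolded expansion, and the corresponding $v$-integral carries a factor $\exp(-\pi n^{2}Ny^{2}/|\Delta|v)$ and evaluates to a $K$-Bessel function, which must then be resummed over the dual frequency via Lemma \ref{lemma Fourier expansion arcsin series}. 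It is not the integral $\int_{0}^{\infty}\beta_{1/2}^{c}(-4\pi Dv)e^{-\kappa v}v^{-1/2}\,dv$ you write down; that integral (with lower limit $1$) is the one used to determine the local singularity type in Theorem \ref{theorem Borcherds lift}. Moreover your ``turning point in $(0,1)$'' mechanism for generating $\mathbf{1}_{Q}(z)$ is not correct: the elementary integral $\int_{0}^{1}(t-nw^{2})^{-1/2}\,dw$ evaluates to an arcsin for every $t\geq n$, with no case distinction tied to the bounded component of $\H\setminus c_{Q}$.

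What is missing is the analytic-continuation step of Corollary \ref{corollary Borcherds lift Fourier expansion additional part}: one subtracts the proposed arctan sum from $\Phi_{\Delta}(f,z)$, notes that the two agree for $y\gg 0$ because the geodesics $c_{Q}$ with $a\neq 0$ have bounded height, and then uses the singularity analysis of Theorem \ref{theorem Borcherds lift} together with the identity rewriting $-2\,\mathbf{1}_{X}(z)\bigl(\arctan(\cdots)+\pi/2\bigr)$ as the singular term $\arctan\bigl(\sqrt{Q_{\Delta}(X)/(-Q_{\Delta}(X_{z}))}\bigr)$ minus a globally real-analytic $\arccot$ term, to conclude that the difference is real analytic on all of $\H$; only then does the expansion computed for $y\gg 0$ persist to all $z\in\H\setminus H_{\Delta}^{-}(f)$. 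Your proposal contains the right local computation (the arcsin integral) but not the argument that converts it into a statement about the global Fourier expansion, so as written the derivation of the third line does not go through.
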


For the general result, see Proposition \ref{proposition Fourier expansion}.
\begin{remark} 
	\begin{enumerate}
		\item For $Q \in \calQ_{\Delta D}$ with $a > 0$ the corresponding geodesic $c_{Q}$ is a semi-circle centered at the real line which divides $\H$ into a bounded and an unbounded connected component, so the characteristic function $\mathbf{1}_{Q}$ makes sense.
		\item The sum over $D$ in the third line is finite since $f$ has a finite principal part. The sum over $Q \in \mathcal{Q}_{\Delta D}$ is locally finite since each point $z \in \H$ lies in the bounded component of $\H \setminus c_{Q}$ for finitely many geodesics $c_{Q} \in \calQ_{\Delta D}$, and it vanishes for $y \gg 0$ large enough since the imaginary parts of points lying on geodesics $c_{Q}$ for $Q \in \calQ_{\Delta D}$ are bounded by $\sqrt{\Delta D}$.
		\item We have $z \in c_{Q}$ for $Q = [a,b,c]$ if and only if $a|z|^{2} + bx + c = 0$. Further, for $a > 0$ a point $z \in \H$ lies in the inside of the bounded component of $\H\setminus c_{Q}$ if and only if $a|z|^{2} + bx + c < 0$. Since $\lim_{x \to -\infty}\arctan(x) = -\frac{\pi}{2}$, we see from the Fourier expansion that $\Phi_{\Delta}(f,z)$ is continuous. However, computing the derivative of the above expansion for $z \in \H \setminus H_{\Delta}^{-}(f)$ shows that the third line is not differentiable at points $z \in H_{\Delta}^{-}(f)$. More precisely, the derivative of $\Phi_{\Delta}(f,z)$ has jumps along the geodesics in $H_{\Delta}^{-}(f)$.
	\end{enumerate}
\end{remark}

\subsection{The derivative of the Borcherds lift} We apply the (derivative of the) Borcherds lift to certain interesting harmonic Maass forms of weight $1/2$ for $\Gamma_{0}(4)$, in order to construct modular integrals of weight $2$ with rational period functions. In \cite{dit}, Duke, Imamoglu and T\'oth constructed a basis $\{h_{d}\}$ (indexed by discriminants $d > 0$) of $H_{1/2}$, which under $\xi_{1/2}$ maps to a basis $\{g_{d}\}$ of the space of weakly holomorphic modular forms of weight $3/2$ for $\Gamma_{0}(4)$. More precisely, the $g_{d}$ are the generating series of traces of singular moduli, see \cite{zagiertraces}. The coefficients of the $h_{d}$ are given by traces of CM values and traces of (regularized) cycle integrals of weakly holomorphic modular functions for $\Gamma$. For example, the Fourier expansion of the function $h = h_{1}$ is given by
\begin{align*}
h(\tau) = \frac{1}{2\pi}\sum_{D > 0}\tr_{J}(D)q^{D} + 2\sqrt{v}\beta_{1/2}^{c}(-4\pi v)q - 8\sqrt{v} + \sqrt{v}\sum_{D < 0}\tr_{J}(D)\beta_{1/2}(4\pi |D|v )q^{D},
\end{align*}
where
\[
\tr_{J}(D) = \begin{dcases}
 \sum_{Q \in \mathcal{Q}_{D}/\Gamma}\frac{J(z_{Q})}{|\overline{\Gamma}_{Q}|} , & D < 0, \\
\sum_{Q \in \mathcal{Q}_{D}/\Gamma}\int_{\Gamma_{Q}\setminus c_{Q}}J(z)\frac{dz}{Q(z,1)},  & D > 0,
\end{dcases}
\]
are traces of CM values and geodesic cycle integrals of $J = j-744$, which need to be regularized as explained in \cite{bif} if $D > 0$ is a square. The harmonic Maass form $h$ does not map to a cusp form but to a weakly holomorphic modular form under $\xi_{1/2}$, so it is interesting to apply our extension of the Borcherds lift to it. The coefficients $c_{h}^{+}(D)$ for $D \leq 0$ vanish, so the Borcherds lift $\Phi_{\Delta}(h,z)$ is a harmonic $\Gamma$-invariant function on $\H \setminus H_{\Delta}^{-}(h)$ with arcsin singularities along the geodesics in $H_{\Delta}^{-}(h)$. In this case, the latter set is just the union of all geodesics $c_{Q}$ for $Q \in \mathcal{Q}_{\Delta}$. Hence the derivative $\Phi'_{\Delta}(h,z) = \frac{\partial}{\partial z}\Phi_{\Delta}(h,z)$ is a holomorphic function on $\H \setminus H_{\Delta}^{-}(h)$ transforming like a modular form of weight $2$ for $\Gamma$. Moreover, it turns out that $\Phi'_{\Delta}(h,z)$ has jump singularities along the geodesics in $H_{\Delta}^{-}(h)$, and admits a nice Fourier expansion.

\begin{proposition}\label{proposition Borcherds lift derivative introduction}
	Let $\Delta > 1$ be a fundamental discriminant. The derivative $\Phi_{\Delta}'(h,z)$ of the Borcherds lift of $h$ is a holomorphic function on $\H \setminus H_{\Delta}^{-}(h)$ which transforms like a modular form of weight $2$ for $\Gamma$. For $z \in \H\setminus H_{\Delta}^{-}(h)$ it has the expansion
	\begin{align*}
	&\frac{1}{4\pi i \sqrt{\Delta }}\Phi'_{\Delta}(h,z) \\
	&= \frac{1}{2\pi}\tr_{1}(\Delta)+ \frac{1}{2\pi}\sum_{n=1}^{\infty}\left(\sum_{m \mid n} \left(\frac{\Delta}{n/m}\right)m\tr_{J}(\Delta m^{2}) \right) e(nz)  + \frac{1}{\pi}\sum_{\substack{Q \in \calQ_{\Delta}\\ a > 0}}\frac{\mathbf{1}_{Q}(z)}{Q(z,1)},
	\end{align*}
	where $\mathbf{1}_{Q}$ denotes the characteristic function of the bounded component of $\H \setminus c_{Q}$.
\end{proposition}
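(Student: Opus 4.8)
The plan is to obtain $\Phi'_{\Delta}(h,z)$ by differentiating the explicit Fourier expansion of $\Phi_{\Delta}(h,z)$ supplied by Proposition \ref{proposition Fourier expansion introduction}. First I would read off the coefficients of $h$ from its given expansion: the holomorphic part contributes $c_h^+(\Delta m^2) = \frac{1}{2\pi}\tr_J(\Delta m^2)$ for $m \geq 1$ and $c_h^+(D) = 0$ for all $D \leq 0$, while the non-holomorphic part contributes $c_h^-(0) = -8$, $c_h^-(1) = 2$, and $c_h^-(D) = 0$ for $D > 1$. Since $c_h^+(D) = 0$ for $D < 0$, Proposition \ref{proposition Fourier expansion introduction} applies verbatim. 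I would also record that for $Q \in \calQ_\Delta$ the genus character $\chi_\Delta(Q)$ attached to the trivial splitting $\Delta = \Delta\cdot 1$ is principal, i.e. $\chi_\Delta(Q) = 1$ for every such $Q$ (if $Q$ represents $n$ coprime to $\Delta$, then $\left(\frac{\Delta}{n}\right) = 1$), which is why no character appears in the statement. It then remains to apply $\frac{\partial}{\partial z} = \frac{1}{2}(\partial_x - i\partial_y)$ to the three lines of the expansion, working on $\H \setminus H_\Delta^-(h)$ so that each $\mathbf 1_Q$ is locally constant.

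For the first (logarithmic) line I would use that $\log|1 - e(w)|$ has holomorphic derivative $\partial_z \log|1-e(mz+b/\Delta)| = -\pi i m\, e(mz+b/\Delta)/(1 - e(mz+b/\Delta))$, expand the geometric series $e(w)/(1-e(w)) = \sum_{k\geq 1}e(kw)$, and then evaluate the quadratic Gauss sum $\sum_{b(\Delta)}\left(\frac{\Delta}{b}\right)e(kb/\Delta) = \left(\frac{\Delta}{k}\right)\sqrt{\Delta}$. Reindexing by $n = km$ collects the coefficients into the divisor sum $\sum_{m\mid n}\left(\frac{\Delta}{n/m}\right)m\,\tr_J(\Delta m^2)$, producing exactly the holomorphic Fourier series in the statement after accounting for the factor $\frac{1}{4\pi i\sqrt\Delta}$.

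For the third (arcsin/arctan) line the key computation is $\partial_z \arctan\!\big(y\sqrt{\Delta}/P(z)\big)$, where $P(z) = a|z|^2 + bx + c$. I would use the identity $P(z)^2 + \Delta y^2 = |Q(z,1)|^2 = Q(z,1)\overline{Q(z,1)}$ (which follows from $\Re Q(z,1) = P - 2ay^2$, $\imag Q(z,1) = y(2ax+b)$, and $\Delta = b^2-4ac$) for the denominator $1 + (y\sqrt\Delta/P)^2$, together with a short simplification showing the numerator equals $-\frac{i}{2}\sqrt{\Delta}\,\overline{Q(z,1)}$. The conjugate factors cancel and one is left with the manifestly holomorphic $-\tfrac{i\sqrt{\Delta}}{2}\,Q(z,1)^{-1}$; multiplying by the prefactor $-8\,\mathbf 1_Q(z)$ and by $\tfrac{1}{4\pi i\sqrt\Delta}$ yields the period-function term $\tfrac1\pi\sum_{Q}\mathbf 1_Q(z)/Q(z,1)$. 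This step simultaneously explains the holomorphy of $\Phi'_\Delta(h,\cdot)$ away from the geodesics (consistent with the harmonicity of $\Phi_\Delta(h,\cdot)$ from Theorem \ref{theorem Borcherds lift introduction}) and, upon restoring the suppressed distributional $\partial_z \mathbf 1_Q$ terms, the jump singularities along $H_\Delta^-(h)$.

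The main obstacle is the constant ($n=0$) term. Its only surviving contribution comes from the second line $\sqrt{\Delta}\,L_\Delta(1)\,y\,c_h^-(0) = -8\sqrt\Delta L_\Delta(1)\,y$, whose $z$-derivative is the constant $4 i\sqrt\Delta L_\Delta(1)$, i.e. $L_\Delta(1)/\pi$ after normalization. To match the stated constant $\tfrac{1}{2\pi}\tr_1(\Delta)$ I would have to identify $\tr_1(\Delta)$, the trace $\sum_{Q\in\calQ_\Delta/\Gamma}\int_{\Gamma_Q\backslash c_Q} dz/Q(z,1)$ of cycle integrals of the constant function, with $2L_\Delta(1)$. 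This is where an external input is needed: evaluating each cycle integral in terms of the regulator and summing over classes is exactly Dirichlet's class number formula for the real quadratic order of discriminant $\Delta$, and the delicate point is pinning down the normalizing constants (and the narrow-class-number conventions) precisely. Finally, weight-$2$ modularity is immediate because $\Phi_\Delta(h,\cdot)$ is $\Gamma$-invariant, so $\partial_z$ raises the weight by $2$.
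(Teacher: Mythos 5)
Your proposal is correct and follows essentially the same route as the paper: the paper also obtains this statement by differentiating the Fourier expansion of $\Phi_{\Delta}(f,z)$ term by term (Proposition \ref{proposition Fourier expansion derivative} and Corollary \ref{corollary derivative additional part}, specialized to $N=1$ and $f=h$), using the Gauss sum evaluation for the holomorphic series, the computation $\partial_z\arctan\bigl(y\sqrt{\Delta}/P(z)\bigr)=-\tfrac{i\sqrt{\Delta}}{2}Q(z,1)^{-1}$ for the jump term, and the identity $L_{\Delta}(1)=\tfrac{1}{2}\tr_{1}(\Delta)$ from Dirichlet's class number formula for the constant term. Your reading of the coefficients of $h$, the triviality of $\chi_{\Delta}$ on $\calQ_{\Delta}$, and all normalizing constants check out.
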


The result for general harmonic Maass forms $f \in H_{1/2}$ of higher level is given in Proposition \ref{proposition Fourier expansion derivative} and Corollary \ref{corollary derivative additional part}. 

\begin{remark} The Fourier series over $n$ is holomorphic on $\H$, whereas the sum over $Q$ has jump singularities along the geodesics $c_{Q}$ with $Q \in \calQ_{\Delta}$. Again, the sum over $Q$ is locally finite and vanishes for $y \gg 0$ large enough.
\end{remark}

\subsection{Modular integrals} In \cite{dit}, Theorem 5, the authors proved that the generating series
\[
F_{\Delta}(z) = \frac{1}{\pi}\sum_{m = 0}^{\infty}\tr_{J_{m}}(\Delta)e(mz),
\]
with $J_{m}(z) = q^{-m} + O(q) \in M_{0}^{!}$, e.g. $J_{0} = 1$ and $J_{1} = J$, defines a holomorphic function on $\H$ which transforms as
\begin{align}\label{eq transformation GDelta introduction}
z^{-2}F_{\Delta}\left(-\frac{1}{z}\right)-F_{\Delta}(z)  = \frac{2}{\pi}\sum_{\substack{Q \in \calQ_{\Delta} \\ c < 0 < a}}\frac{1}{Q(z,1)},
\end{align}
so $F_{\Delta}(z)$ is a holomorphic modular integral of weight $2$ with holomorphic rational period functions in the sense of \cite{knoppeichler}.

	Returning to the derivative of the Borcherds lift of $h$, we note that
\begin{align}\label{eq trace equation}
\tr_{J_{m}}(\Delta) = \sum_{d \mid m}\left( \frac{\Delta}{m/d}\right)d \tr_{J_{1}}(\Delta d^{2}),
\end{align}
compare \cite{zagiereisensteinriemann}, pp. 290--292, so $F_{\Delta}(z)$ in fact agrees with $\Phi'_{\Delta}(h,z)$ up to some constant factor if $y \gg 0$ is sufficiently large. The transformation behaviour of the singular part in the Fourier expansion of $\Phi'_{\Delta}(h,z)$ can easily be determined, so we can recover \eqref{eq transformation GDelta introduction} from Proposition \ref{proposition Borcherds lift derivative introduction}. Further, using the Borcherds lift we generalize the construction of modular integrals of weight $2$ with rational period functions from \cite{dit} to higher level, see Proposition \ref{proposition weight 2 modular integral}. The coefficients of our modular integrals are linear combinations of Fourier coefficients of the holomorphic parts of harmonic Maass forms $f$ of weight $1/2$. Choosing $f$ as the image of a theta lift of a harmonic Maass form $F$ of weight $0$ studied by Bruinier, Funke and Imamoglu \cite{bif}, we obtain modular integrals whose coefficients are linear combinations of traces of cycle integrals of $F$, see Example \ref{example weight 2 modular integral}. In fact, the construction of $F_{\Delta}$ as a theta lift and its generalizations to higher level were our main motivation to extend the Borcherds lift to the full space $H_{1/2}$.

\subsection{Borcherds products} Bruinier and Ono \cite{bruinieronoheegnerdivisors} defined a twisted Borcherds product associated to a harmonic Maass form $f \in H_{1/2}^{+}$ with real coefficients $c_{f}^{+}(D)$ for all $D$, and $c_{f}^{+}(D) \in \Z$ for $D \leq 0$. For $\Delta > 1$ a fundamental discriminant and $y \gg 0$ sufficiently large the twisted Borcherds lift of $f$ is given by
\[
\Psi_{\Delta}(f,z) = \prod_{m=1}^{\infty}\prod_{b (\Delta)}[1-e(mz+b/\Delta)]^{\left(\frac{\Delta}{b}\right)c_{f}^{+}(\Delta m^{2})}.
\]
It has a meromorphic continuation to $\H$ with roots and poles at CM points corresponding to the principal part of $f$, and it transforms like a modular form of weight $0$ with some unitary character for $\Gamma$. We will define Borcherds products associated to general harmonic Maass forms $f \in H_{1/2}$. For simplicity, in the introduction we only consider the harmonic Maass form $\pi h$. The general result is given in Theorem~\ref{theorem new borcherds products}.

\begin{theorem}
	Let $\Delta > 1$ be a fundamental discriminant. Then the infinite product 
	\begin{align*}
	\Psi_{\Delta}(z) &=  e\left( -\sqrt{\Delta}\tr_{1}(\Delta)z\right)\prod_{m=1}^{\infty}\prod_{b (\Delta)}[1-e(mz+b/\Delta)]^{\left(\frac{\Delta}{b}\right)\tr_{J}(\Delta m^{2})}
	\end{align*}
	converges to a holomorphic function on $\H$. Its logarithmic derivative is given by
	\begin{align*}
	\frac{\partial}{\partial z}\log(\Psi_{\Delta}(z)) = -2\pi^{2} i\sqrt{\Delta}F_{\Delta}(z).
	\end{align*} 
	Further, it transforms as
	\begin{align*}
	\Psi_{\Delta}(z+1) &= e\left( -\sqrt{\Delta}\tr_{1}(\Delta)\right)\Psi_{\Delta}(z), \\
	\Psi_{\Delta}\left(-\frac{1}{z}\right)&= e\left(-2\sum_{\substack{Q \in \calQ_{\Delta} \\ c < 0 < a}}\left(\log\left(\frac{z-w_{Q}}{i-w_{Q}}\right)-\log\left(\frac{z-w_{Q}'}{i-w_{Q}'}\right)\right) \right)\Psi_{\Delta}(z),
	\end{align*}
	where $w_{Q} > w_{Q}'$ denote the real endpoints of the geodesic $c_{Q}$.
	\end{theorem}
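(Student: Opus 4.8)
The plan is to derive this final theorem as a direct consequence of Proposition~\ref{proposition Borcherds lift derivative introduction}, treating $\Psi_\Delta(z)$ as essentially the exponential of the Borcherds lift $\Phi_\Delta(\pi h, z)$ and tracking its transformation behaviour via the logarithmic derivative. The key link is the relation $\frac{\partial}{\partial z}\log(\Psi_\Delta(z)) = -2\pi^2 i \sqrt{\Delta}\, F_\Delta(z)$, which I expect to verify first, since once this is established the transformation formulas follow by integration against the period-function identity~\eqref{eq transformation GDelta introduction}.

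**First I would** establish convergence of the infinite product for $y \gg 0$ and the logarithmic derivative formula. Taking the logarithm of $\Psi_\Delta(z)$ and differentiating termwise, the linear prefactor $e(-\sqrt{\Delta}\tr_1(\Delta)z)$ contributes the constant $-2\pi i \sqrt{\Delta}\,\tr_1(\Delta)$, while the product terms contribute
\[
\frac{\partial}{\partial z}\sum_{m=1}^\infty\sum_{b(\Delta)}\left(\frac{\Delta}{b}\right)\tr_J(\Delta m^2)\log[1-e(mz+b/\Delta)].
\]
Differentiating and summing the resulting geometric-type series over $b$ against the character $\left(\frac{\Delta}{b}\right)$ produces the holomorphic Fourier series in Proposition~\ref{proposition Borcherds lift derivative introduction}, after invoking the trace identity~\eqref{eq trace equation} to reorganize the coefficients into $\sum_{m\mid n}\left(\frac{\Delta}{n/m}\right)m\,\tr_J(\Delta m^2)$. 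Matching this against $\Phi'_\Delta(h,z)$ and the definition of $F_\Delta$ gives the stated logarithmic derivative, where the constant term $\frac{1}{2\pi}\tr_1(\Delta)$ lines up with the prefactor contribution. The product converges for $y \gg 0$ because the exponents grow only polynomially while $|e(mz+b/\Delta)|$ decays exponentially.

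**Next I would** extract the transformation behaviour by integrating the logarithmic derivative. For the translation $z \mapsto z+1$, only the linear prefactor is sensitive, giving immediately $\Psi_\Delta(z+1) = e(-\sqrt{\Delta}\tr_1(\Delta))\Psi_\Delta(z)$. For the inversion $z \mapsto -1/z$, I would integrate the identity
\[
\frac{\partial}{\partial z}\log\Psi_\Delta(-1/z) - \frac{\partial}{\partial z}\log\Psi_\Delta(z) = -2\pi^2 i\sqrt{\Delta}\left(z^{-2}F_\Delta(-1/z) - F_\Delta(z)\right),
\]
where I use $\frac{\partial}{\partial z}\log\Psi_\Delta(-1/z) = z^{-2}(\partial_w\log\Psi_\Delta)(-1/z)$. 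By~\eqref{eq transformation GDelta introduction} the right-hand side equals $-4\pi i\sqrt{\Delta}\sum_{Q\in\calQ_\Delta,\, c<0<a} Q(z,1)^{-1}$. Using the partial fraction decomposition $\frac{1}{Q(z,1)} = \frac{1}{a(z-w_Q)(z-w_Q')}$ with $w_Q, w_Q'$ the real endpoints, and the identity $\frac{1}{(z-w_Q)(z-w_Q')} = \frac{1}{w_Q-w_Q'}\left(\frac{1}{z-w_Q}-\frac{1}{z-w_Q'}\right)$, each summand integrates to a difference of logarithms $\log(z-w_Q) - \log(z-w_Q')$. Fixing the constant of integration by normalizing at $z = i$ yields the stated multiplier involving $\log\frac{z-w_Q}{i-w_Q} - \log\frac{z-w_Q'}{i-w_Q'}$.

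**The hard part will be** controlling the analytic continuation of $\Psi_\Delta(z)$ from the region $y \gg 0$ (where the product converges absolutely and $F_\Delta$ agrees with $\Phi'_\Delta(h,z)$) to all of $\H$, and verifying that the single-valued branches of the logarithms on the right-hand side of the inversion formula are chosen consistently so that the multiplier is genuinely well-defined. Since $a|z_Q|^2 + bx + c$ vanishes on the geodesic $c_Q$, I must check that the normalizing point $i$ and the path of integration avoid the real endpoints $w_Q, w_Q'$ and that the jump singularities of $F_\Delta$ along the geodesics (present in $\Phi'_\Delta(h,z)$ via the characteristic-function term) do not obstruct integration; the fact that $\Psi_\Delta$ is claimed holomorphic on all of $\H$ indicates these jumps cancel in the exponential, which I would confirm by noting that $F_\Delta$ and $\Phi'_\Delta(h,z)$ differ precisely by the locally-constant-across-geodesics term. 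I would assume throughout the convergence and Fourier expansion established in Proposition~\ref{proposition Borcherds lift derivative introduction}.
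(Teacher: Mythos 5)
Your proposal is correct, and its skeleton---verify the logarithmic-derivative identity from Proposition \ref{proposition Borcherds lift derivative introduction} together with the trace relation \eqref{eq trace equation}, then obtain the $S$-transformation by integrating the period relation \eqref{eq transformation GDelta introduction} via partial fractions---matches the paper's. Where you genuinely diverge is in how the integration constants are pinned down. The paper builds, for each class $\mathcal{A}$ of quadratic forms, a canonical weight-$0$ cocycle $R_{M}^{\mathcal{A}}=G\circ M-G$ (Proposition \ref{proposition weight 0 cocycle}, following \cite{ditlinking}); the functional equation $H(s,0)=-H(2-s,0)$ forces $H(1,0)=0$, which produces exactly the normalized branches $\log\frac{z-w_{Q}}{i-w_{Q}}-\log\frac{z-w_{Q}'}{i-w_{Q}'}$, and the residual constant $\chi(S)$ is removed using the cocycle property and the relations $S^{4}=(ST)^{6}=1$, $\chi(T)=1$. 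You instead evaluate at the elliptic fixed point $z=i$ of $S$, where $\Psi_{\Delta}(-1/i)=\Psi_{\Delta}(i)\neq 0$ and the proposed multiplier equals $1$; this settles both constants at once, is more elementary, and is consistent with the paper since $R_{S}^{\mathcal{A}}(i)=G(Si)-G(i)=0$. The price is that the fixed-point trick is special to elliptic elements and would not yield the paper's general Theorem \ref{theorem new borcherds products} for arbitrary $M\in\Gamma_{0}(N)$, but for the statement at hand ($N=1$, generators $T$ and $S$) it suffices. Two small corrections: your worry about jump singularities obstructing the integration is moot, because the identity you integrate involves only $F_{\Delta}$, which is holomorphic on all of $\H$; the jumps live in $\Phi_{\Delta}'(h,z)=4\pi i\sqrt{\Delta}\,(F_{\Delta}+F_{\Delta}^{*})$, and $F_{\Delta}^{*}$ is piecewise rational rather than ``locally constant across geodesics'' as you write---it simply never enters the integration. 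Also, the normalization at $z=i$ (and the claimed holomorphy) requires convergence and non-vanishing of the product on all of $\H$, not just for $y\gg 0$; this follows from the polynomial growth of $\tr_{J}(\Delta m^{2})$ (the principal part of $h^{+}$ is trivial), a point the paper likewise leaves implicit.
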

	
The work is organized as follows. We start with a section on the necessary preliminaries about the Grassmannian model of the upper half-plane, which is convenient for the study of regularized theta lifts, and vector valued harmonic Maass forms for the Weil representation. 

In Section \ref{section Borcherds lift}, we define the Borcherds lift of a harmonic Maass form of weight $1/2$ and prove its basic analytic properties.

In Section \ref{section Fourier expansion} we compute the Fourier expansion of the Borcherds lift. After that, in Section \ref{section Borcherds lift derivative} we compute the Fourier expansion of the derivative of the Borcherds lift and show that it has jump singularities along geodesics. 

In Section \ref{section modular integrals} we apply the derivative of the Borcherds lift to an interesting class of harmonic Maass forms which arise as images of regularized theta lifts of scalar valued weight $0$ harmonic Maass forms $F$. We show that the generating series of certain sums of traces of cycle integrals of $F$ transform like modular forms of weight $2$ for $\Gamma_{0}(N)$ and are holomorphic up to jump singularities along geodesics. Their non-singular parts yield holomorphic modular integrals of weight $2$ for $\Gamma_{0}(N)$ with rational period functions.

Finally, in Section \ref{section Borcherds products} we define the Borcherds product associated to a general harmonic Maass form of weight $1/2$ and prove its modularity.

\section{Preliminaries}
\label{section preliminaries}

\subsection{The Grassmannian model of the upper half-plane}
\label{section lattice}

For a positive integer $N$ we consider the quadratic space $V$ of all rational traceless $2$ by $2$ matrices, equipped with the quadratic form $Q(X) = N\det(X)$ and the associated bilinear form $(X,Y) = -N\tr(XY)$. It has signature $(1,2)$. We let $D$ be the Grassmannian of positive definite lines in $V(\R) = V\otimes \R$. We identify it with the complex upper half-plane $\H$ by associating to $z = x + iy \in \H$ the line spanned by
\begin{align*}
X(z) = \frac{1}{\sqrt{2N}y}\begin{pmatrix}-x & |z|^{2} \\ -1 & x \end{pmatrix}.
\end{align*}
The group $\SL_{2}(\R)$ acts as isometries on $V(\R)$ by conjugation and this action is compatible with the action by fractional linear transformations on $\H$ under the above identification. 

For $X \in V$ and $z \in D$ we let $X_{z}$ and $X_{z^{\perp}}$ denote the projection of $X$ to $z$ and its orthogonal complement $z^{\perp}$, respectively.

\subsection{A lattice related to $\Gamma_{0}(N)$}
In $V$ we consider the even lattice
\begin{align*}
	L = \left\{\begin{pmatrix}-b & -c/N \\ a & b\end{pmatrix}: a,b,c \in \Z \right\}.
\end{align*}
Its dual lattice is given by
\begin{align*}
	L' = \left\{\begin{pmatrix}-b/2N & -c/N \\ a & b/2N\end{pmatrix}: a,b,c \in \Z \right\}.
\end{align*}
We see that $L'/L \cong \Z/2N\Z$, and we will use this identification without further notice in the following. For $m \in \Q$ and $h \in L'/L$ we let
\begin{align*}
L_{m,h} = \{X \in L+h: Q(X) = m\}.
\end{align*}
The group $\Gamma_{0}(N)$ acts on $L_{m,h}$, with finitely many orbits if $m \neq 0$.

Let $X \in L_{m,h}$. If $m > 0$ we let 
\[
z_{X} = \spann(X) \in D
\]
 be the associated CM (or Heegner) point. If $m < 0$ we let 
 \[
 c_{X} = \{z \in D: z \perp X\}
 \]
 be the associated geodesic in $D$. We use the same symbols for the corresponding points and geodesics in $\H$. We can identify $X = [a,b,c] \in L'$ with the binary quadratic form $Q_{X} = [aN,b,c]$. Under this identification, the set $L_{m,h}$ corresponds to the set of all binary quadratic forms $[aN,b,c]$ with $a,b,c \in \Z$ of discriminant $-4Nm$ and $b \equiv h (2N)$. Furthermore, CM points and geodesics associated to vectors $X \in L_{m,h}$ correspond to the usual CM points and geodesics associated to $Q_{X}$. We define the quantity
 \[
 p_{X}(z) = -\frac{aN|z|^{2} +bx + c}{y\sqrt{N}},
\]
which vanishes exactly at the geodesic $c_{X}$.

\subsection{Vector valued harmonic Maass forms for the Weil representation}
\label{section harmonic maass forms}

Let $\tilde{\Gamma} = \Mp_{2}(\Z)$ be the integral metaplectic group, realized as the set of pairs $(M,\phi)$ with $M = \left(\begin{smallmatrix}a & b \\ c & d \end{smallmatrix}\right) \in \SL_{2}(\Z)$ and $\phi: \H \to \C$ holomorphic with $\phi^{2}(\tau) = c\tau + d$. We let $\tilde{\Gamma}_{\infty}$ be the subgroup of $\tilde{\Gamma}$ generated by $\tilde{T} =\left(\left(\begin{smallmatrix}1 & 1 \\ 0 & 1\end{smallmatrix}\right),1\right)$. Let $\C[L'/L]$ be the group ring of $L'/L$, generated by the formal basis vector $\e_{\gamma}$ for $\gamma \in L'/L$. We let $\langle \cdot,\cdot \rangle$ be the inner product on $\C[L'/L]$ which is antilinear in the second variable and satisfies $\langle \e_{\gamma},\e_{\beta}\rangle = \delta_{\gamma,\beta}$. The Weil representation $\rho_{L}$ is a unitary representation of $\tilde{\Gamma}$ on $\C[L'/L]$, see \cite{borcherds}, Section 4. We let $\rho_{L}^{*}$ be the dual Weil representation.

A harmonic Maass form of weight $1/2$ for $\rho_{L}$ is a harmonic function $f: \H \to \C[L'/L]$ which transforms like a modular form of weight $1/2$ for $\rho_{L}$ and is at most of linear exponential growth at $i\infty$. We let $H_{1/2,\rho_{L}}$ be the space of all harmonic Maass forms of weight $1/2$ for $\rho_{L}$. Every $f \in H_{1/2,\rho_{L}}$ can be written as a sum $f = f^{+} + f^{-}$ of a holomorphic and a non-holomorphic part having Fourier expansions of the form
\begin{align*}
f^{+} &= \sum_{h \in L'/L}\sum_{n \gg -\infty}c_{f}^{+}(n,h)e(n\tau)\e_{h}, \\
f^{-} &= \sum_{h \in L'/L}\bigg(c_{f}^{-}(0,h)\sqrt{v} + \sum_{n <0}c_{f}^{-}(n,h)\beta_{1/2}(-4\pi nv)e(n\tau) \\
&\qquad \qquad \qquad + \sum_{0< n \ll \infty}c_{f}^{-}(n,h)\beta_{1/2}^{c}(-4\pi nv)e(n\tau) \bigg)\e_{h},
\end{align*}
with coefficients $c_{f}^{\pm}(D) \in \C$, $e(x) = e^{2\pi i x}$ for $x \in \C$, and
\[
\beta_{1/2}(s) = \int_{1}^{\infty}e^{-st}t^{-1/2}dt, \qquad \beta_{1/2}^{c}(s) = \int_{0}^{1}e^{-st}t^{-1/2}dt.
\]
\begin{remark} For $N = 1$ the space $H_{1/2,\rho_{L}}$ of vector valued harmonic Maass forms is isomorphic to the space $H_{1/2}$ of scalar valued harmonic Maass forms from the introduction. The isomorphism is given by the map $f_{0}(\tau)\e_{0}+f_{1}(\tau)\e_{1} \mapsto f_{0}(4\tau) + f_{1}(4\tau)$, compare \cite{eichlerzagier}, Theorem 5.1. This identification can be used to translate the results from the body of the paper to the scalar valued setup in the introduction.
\end{remark}

\section{Analytic properties of the Borcherds lift}
\label{section Borcherds lift}

In this section we extend Borcherds' regularized theta lift to general harmonic Maass forms of weight $1/2$.

Let $\Delta \in \Z$ be a fundamental discriminant (possibly $1$) and let $r \in \Z/2N\Z$ such that $\Delta \equiv r^{2} \mod 4N$. We set 
\[
\tilde{\rho}_{L} = \begin{cases}
\rho_{L}, & \text{if } \Delta > 0, \\
\rho^{*}_{L}, & \text{if }\Delta < 0,
\end{cases}, \qquad 
Q_{\Delta}(X) = \frac{1}{|\Delta|}Q(X), \qquad (X,Y)_{\Delta} = \frac{1}{|\Delta|}(X,Y).
\]
We consider the twisted Siegel theta function
\begin{align}\label{eq definition theta function}
\Theta_{\Delta,r}(\tau,z) = v\sum_{h \in L'/L}\sum_{\substack{X\in L + rh \\ Q(X) \equiv \Delta Q(h) \, (\Delta)}}\chi_{\Delta}(X)e\big(\tau Q_{\Delta}(X_{z}) + \bar{\tau}Q_{\Delta}(X_{z^{\perp}})\big) \e_{h},
\end{align}
which is $\Gamma_{0}(N)$-invariant in $z$ and transforms like a modular form of weight $-1/2$ for $\tilde{\rho}_{L}$, compare \cite{bruinieronoheegnerdivisors}, Theorem 4.1. Here $\chi_{\Delta}$ is the genus character defined as in \cite{bruinieronoheegnerdivisors}, Section 4.

For $f \in H_{1/2,\tilde{\rho}^{*}_{L}}$ we let
\begin{align*}
H_{\Delta,r}^{+}(f) = \bigcup_{\substack{h \in L'/L,n < 0 \\ c_{f}^{+}(n,h) \neq 0} }\left\{z_{X}: X \in L_{-|\Delta|n,rh}\right\}, \qquad
H_{\Delta,r}^{-}(f) = \bigcup_{\substack{h \in L'/L,n > 0 \\ c_{f}^{-}(n,h) \neq 0} }\bigcup_{X \in L_{-|\Delta|n,rh}}c_{X},
\end{align*}
be the sets of Heegner points and geodesics associated to $f$.

Following Borcherds \cite{borcherds}, we define the regularized theta lift of $f \in H_{1/2,\tilde{\rho}^{*}_{L}}$ by
\begin{align}\label{eq definition theta lift}
\Phi_{\Delta,r}(f,z)  = \CT_{s = 0}\left(\lim_{T \to \infty}\int_{\mathcal{F}_{T}}\left\langle f(\tau),\overline{\Theta_{\Delta,r}(\tau,z)}\right\rangle v^{-s}\frac{du \, dv}{v^{2}}\right),
\end{align}
where 
\[
\mathcal{F}_{T} = \{\tau = u+iv \in \H:\, |\tau| \geq 1, \, |u| \leq 1/2, \, v  \leq T\}
\]
is a truncated fundamental domain for the action of $\SL_{2}(\Z)$ on $\H$, and $\CT_{s = 0}F(s)$ denotes the constant term in the Laurent expansion of the analytic continuation of $F(s)$ at $s = 0$. 

We say that a complex-valued function $f$ defined on some subset of $\R^{n}$ has a singularity of type $g$ (written $f \approx g$) at a point $z_{0}$ if there is an open neighbourhood $U$ of $z_{0}$ such that $f$ and $g$ are defined on a dense subset of $U$ and $f-g$ can be continued to a real analytic function on $U$.

\begin{theorem}\label{theorem Borcherds lift}
		For $f \in H_{1/2,\tilde{\rho}^{*}_{L}}$ the Borcherds lift $\Phi_{\Delta,r}(f,z)$ defines a $\Gamma_{0}(N)$-invariant real analytic function on $\H \setminus (H^{+}_{\Delta,r}(f) \cup H^{-}_{\Delta,r}(f))$ with
	 \[
	 \Delta_{0}\Phi_{\Delta,r}(f,z) = \begin{cases} -2c_{f}^{+}(0,0), & \text{if } \Delta = 1, \\
	 0, & \text{if } \Delta \neq 1.
	 \end{cases}
	 \]
	 At a point $z_{0} \in H^{+}_{\Delta,r}(f) \cup H^{-}_{\Delta,r}(f)$ it has a singularity of type
		\begin{align*}
		&-\sum_{h \in L'/L}\sum_{n < 0}c_{f}^{+}(n,h)\sum_{\substack{X\in L_{-|\Delta|n,rh} \\ z_{0} = z_{X}}}\chi_{\Delta}(X)\log(-Q_{\Delta}(X_{z^{\perp}}))\\
		&+
		\sum_{h \in L'/L}\sum_{n > 0}c_{f}^{-}(n,h)n^{-1/2}\sum_{\substack{X  \in L_{-|\Delta|n,rh} \\ z_{0} \in c_{X} }}\chi_{\Delta}(X)\arcsin\left(\sqrt{\frac{Q_{\Delta}(X)}{Q_{\Delta}(X_{z^{\perp}})}}\right).
		\end{align*}
	\end{theorem}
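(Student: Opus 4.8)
The plan is to unfold the theta integral against a Poincaré-series decomposition of $f$ and to analyze the resulting singularities term by term. First I would split $f = f^+ + f^-$ and observe that, since the theta function $\Theta_{\Delta,r}$ transforms with weight $-1/2$ for $\tilde\rho_L$, the pairing $\langle f,\overline{\Theta_{\Delta,r}}\rangle$ is $\SL_2(\Z)$-invariant and the regularized integral is well-defined. The $\Gamma_0(N)$-invariance in $z$ is immediate from that of $\Theta_{\Delta,r}$. To get the differential equation, I would recall that the Siegel theta function satisfies a relation of the form $\Delta_{\tau}\Theta = \Delta_{z,\text{weight }0}\Theta$ (up to the usual weight-shifting constants), so that applying the hyperbolic Laplacian $\Delta_0$ in $z$ under the integral and using $\Delta_{1/2}f = 0$ reduces everything to a boundary term at the cusp, which picks out exactly the constant coefficient $c_f^+(0,0)$ when $\Delta = 1$ and vanishes otherwise.

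For the singularities, the key is that away from the archimedean issues the theta integral converges, and the sum defining $\Theta_{\Delta,r}$ is locally uniformly convergent on the complement of the Heegner points and geodesics; hence $\Phi_{\Delta,r}(f,z)$ is real analytic there. Near a point $z_0$ lying on some $z_X$ or $c_X$, I would isolate the contribution of the individual lattice vectors $X \in L_{-|\Delta|n,rh}$ responsible for the singularity. The standard technique (following Borcherds and Bruinier) is to replace the theta kernel by its rank-one pieces and reduce the computation of the singular part to a single archimedean integral of the form $\int_0^\infty \exp(-2\pi Q_\Delta(X_{z^\perp})/v - \ldots)\,v^{c}\,dv$, coming from the $v \to 0$ boundary of the truncated domain after unfolding against a Whittaker/Poincaré representation of $f^+$ or $f^-$. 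The holomorphic principal part $c_f^+(n,h)$ with $n<0$ contributes the logarithmic term via the identity $\int_0^\infty e^{-At}\,t^{-1}\,dt \sim -\log A$, producing $-\log(-Q_\Delta(X_{z^\perp}))$ with the genus character weight $\chi_\Delta(X)$. The genuinely new input is the non-holomorphic principal part: the coefficients $c_f^-(n,h)$ with $n>0$ are paired against $\beta_{1/2}^c$, and the relevant archimedean integral evaluates to an arcsine. Concretely, I expect the singular contribution of each such $X$ to reduce to an integral like $\int \bigl(1 + Q_\Delta(X_{z^\perp})/Q_\Delta(X)\bigr)^{-1/2}$-type expression whose primitive is $\arcsin\bigl(\sqrt{Q_\Delta(X)/Q_\Delta(X_{z^\perp})}\bigr)$, matching the claimed formula and the normalization $n^{-1/2} = Q_\Delta(X)^{-1/2}\cdot|\Delta|^{-1/2}\cdot\ldots$ up to the constants absorbed into the definition of $\beta_{1/2}^c$.

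The main obstacle will be the careful treatment of the arcsine singularity: unlike the logarithmic case, where the divergence comes from a genuine pole at $Q_\Delta(X_{z^\perp}) = 0$ in the $v\to 0$ limit and the regularization via $\CT_{s=0}$ is classical, here the singularity is milder (the lift stays continuous, as the introduction emphasizes), so I must track the precise asymptotics of the incomplete-gamma-type integral $\beta_{1/2}^c(-4\pi n v)$ as $v \to 0$ and show that its contribution, after integration against the relevant component of $\Theta_{\Delta,r}$, has a non-smooth but continuous behavior of exactly arcsine type along $c_X$. In practice I would change variables so that the transverse coordinate to the geodesic becomes $p_X(z) \propto Q_\Delta(X_{z^\perp})^{1/2}$, expand the integrand near $p_X = 0$, and exhibit the $\arcsin$ as the explicit antiderivative; the difference $\Phi_{\Delta,r}(f,z) - (\text{singular terms})$ is then real analytic because the remaining integral converges absolutely and depends smoothly on $z$. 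Checking that the genus-character twisted sums are finite (as asserted) and that the orientation/sign conventions in $\chi_\Delta(X)$ are consistent across the CM-point and geodesic cases is routine but must be done with care.
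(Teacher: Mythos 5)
Your key analytic inputs are the right ones: the Laplace equation does follow by intertwining $\Delta_{0,z}$ with $\Delta_{1/2,\tau}$ through the theta kernel, applying Stokes' theorem, and observing that the only pole of the $s$-regularized integral at $s=0$ comes from the constant term $c_{f}^{+}(0,0)$ when $\Delta=1$; and the two archimedean integrals you name --- $\int_{1}^{\infty}e^{-4\pi tv}\,dv/v\approx-\log t$ as $t=-Q_{\Delta}(X_{z^{\perp}})\to 0$, and the $\beta_{1/2}^{c}$-integral evaluating to $n^{-1/2}\arcsin(\sqrt{n/t})$ as $t\to n$ --- are exactly the two lemmas the paper proves and then substitutes with $n=-Q_{\Delta}(X)$ and $t=-Q_{\Delta}(X_{z^{\perp}})$.

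However, the mechanism you propose for isolating these integrals --- unfolding the theta integral against a Poincar\'e-series decomposition and reading the singularity off the $v\to 0$ boundary --- would fail. Unfolding (which the paper does use, but only later, for the Fourier expansion) is justified only for $y\gg 0$, so it cannot detect the local singularity structure at an arbitrary CM point or geodesic in $\H$. The paper instead splits $\mathcal{F}_{T}$ into the compact piece $v\le 1$ (harmless) and the region $v\ge 1$, inserts the Fourier expansions of $f$ and $\Theta_{\Delta,r}$ there, carries out the $u$-integral, and obtains for each coefficient a sum over lattice vectors of integrals of the form $\int_{1}^{\infty}(\cdots)\exp(4\pi Q_{\Delta}(X_{z^{\perp}})v)v^{-s-\ast}dv$; the singularity lives at the $v\to\infty$ end, where the exponential fails to decay for the finitely many $X$ with $Q_{\Delta}(X_{z^{\perp}})$ near $0$ (resp.\ near $Q_{\Delta}(X)$). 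Your picture of an integral containing $\exp(-cQ_{\Delta}(X_{z^{\perp}})/v)$ near $v=0$ belongs to the unfolded Fourier-expansion computation, not to the singularity analysis. A second point you pass over is convergence of the genuinely new term: for the coefficients $c_{f}^{-}(n,h)$ with $n>0$ paired against $\beta_{1/2}^{c}$, absolute convergence away from the geodesics is not automatic and requires the estimate $\beta_{1/2}^{c}(4\pi Q_{\Delta}(X)v)\le 2\exp(-4\pi Q_{\Delta}(X)v)$ together with $Q_{\Delta}(X)=Q_{\Delta}(X_{z})+Q_{\Delta}(X_{z^{\perp}})$, reducing matters to the majorant $\sum_{X}\exp(-4\pi Q_{\Delta}(X_{z})v)$, which is controlled precisely because $Q_{\Delta}(X_{z})$ is bounded below by a positive constant on compacta off $H^{-}_{\Delta,r}(f)$.
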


	\begin{remark}~
		\begin{enumerate}
			\item For $X =  \begin{pmatrix}-b/2N & -c/N \\ a & b/2N\end{pmatrix} \in L'$ we have 
			\[
			-Q_{\Delta}(X_{z^{\perp}}) = \frac{1}{4N|\Delta|y^{2}}|Q_{X}(z)|^{2} = \frac{1}{4N|\Delta|y^{2}}|aNz^{2} + bz + c|^{2},
			\]
			which yields a more explicit formula for the singularities. Since $	0 < \frac{Q_{\Delta}(X)}{Q_{\Delta}(X_{z^{\perp}})} \leq 1$ for all $X$ with $Q_{\Delta}(X) < 0$, and $Q_{\Delta}(X)/Q_{\Delta}(X_{z^{\perp}}) = 1$ exactly for $z \in c_{X}$, we see that the Borcherds lift extends to a continuous function on $\H \setminus H_{\Delta,r}^{+}(f)$, which is not differentiable along the geodesics in $H_{\Delta,r}^{-}(f)$. 
			Note that we can also write the singularities in the form
			\[
			\arcsin\left(\sqrt{\frac{Q_{\Delta}(X)}{Q_{\Delta}(X_{z^{\perp}})}}\right) = \arctan\left(\sqrt{\frac{Q_{\Delta}(X)}{-Q_{\Delta}(X_{z})}}\right).
			\]
			\item For every exact divisor $d \mid \mid N$ (i.e., $d \mid N$ and $(d,N/d) = 1$) the Atkin-Lehner involution $W_{d}$ acts on the Siegel theta function by
			\[
			\Theta_{\Delta,r}(\tau,W_{d}z)  = \Theta_{\Delta,r}(\tau,z)^{w_{d}},
			\]
			where $w_{d}$ is the orthogonal map on $L'/L$ defined by $w_{d}(h) \equiv -h (2d)$ and $w_{d}(h) \equiv h (2N/d)$, which acts on $f = \sum_{h}f_{h}\e_{h}$ by $f^{w_{d}} = \sum_{h}f_{h}\e_{w_{d}(h)}$. This implies that the Borcherds lift satisfies
			\[
			\Phi_{\Delta,r}(f,W_{d}z) = \Phi_{\Delta,r}(f^{w_{d}},z).
			\]		
			\end{enumerate}
	\end{remark}
	
	\begin{proof}[Proof of Theorem \ref{theorem Borcherds lift}]
		We first show that for $z \in \H \setminus (H_{\Delta,r}^{+}(f)\cup H_{\Delta,r}^{-}(f))$ the integral in \eqref{eq definition theta lift} converges absolutely and locally uniformly for $\Re(s) > 1/2$ and has a meromorphic continuation to $s = 0$. The proof follows the arguments of \cite{bruinierhabil}, Proposition~2.8. 
		
		The integral over the compact set $\mathcal{F}_{1}= \{\tau \in \H:\, |\tau| \geq 1, \, |u| \leq 1/2, \, v  \leq 1\}$ converges absolutely and locally uniformly for all $s \in \C$ and $z \in \H$. We consider the remaining integral
		\begin{align*}
		\varphi(z,s) &= \int_{v = 1}^{\infty}\int_{u = 0}^{1}\left\langle f(\tau),\overline{\Theta_{\Delta,r}(\tau,z)}\right\rangle v^{-s} \frac{du \, dv}{v^{2}}.
		\end{align*}
		Inserting the Fourier expansions of $f(\tau)$ and $\Theta_{\Delta,r}(\tau,z)$ and carrying out the integral over $u$, we obtain
		\begin{align*}
		&\varphi(z,s) = \chi_{\Delta}(0)\, \left( c_{f}^{+}(0,0)\int_{v=1}^{\infty}v^{-1-s}dv + c_{f}^{-}(0,0)\int_{v=1}^{\infty}v^{-1/2-s}dv\right) \\
		&\quad+ \int_{v = 1}^{\infty}\sum_{h,X}\chi_{\Delta}(X)c_{f}^{+}(-Q_{\Delta}(X),h)\exp\left(4\pi  Q_{\Delta}(X_{z^{\perp}})v\right)v^{-s-1}dv \\
		&\quad + \int_{v = 1}^{\infty}\sum_{Q(X) = 0}\chi_{\Delta}(X)c_{f}^{-}(-Q_{\Delta}(X),h)\exp(4\pi  Q_{\Delta}(X_{z^{\perp}})v)v^{-s-1/2}dv\\
		&\quad + \int_{v = 1}^{\infty}\sum_{Q(X) > 0}\chi_{\Delta}(X)c_{f}^{-}(-Q_{\Delta}(X),h)\beta_{1/2}(4\pi Q_{\Delta}(X)v)\exp(4\pi  Q_{\Delta}(X_{z^{\perp}})v)v^{-s-1/2}dv \\
		&\quad + \int_{v = 1}^{\infty}\sum_{Q(X) < 0}\chi_{\Delta}(X)c_{f}^{-}(-Q_{\Delta}(X),h)\beta_{1/2}^{c}(4\pi Q_{\Delta}(X)v)\exp(4\pi  Q_{\Delta}(X_{z^{\perp}})v)v^{-s-1/2}dv
		\end{align*}
		where the sums run over $h \in L'/L$ and $X \in (L+rh)\setminus\{0\}$ with $Q(X) \equiv \Delta Q(h) \mod \Delta$.
		
		Since $\chi_{\Delta}(0) =0$ for $\Delta \neq 1$ the integrals in the first line only appear if $\Delta = 1$. They can be evaluated for $\Re(s) > 1/2$ by
		\[
		\int_{v=1}^{\infty}v^{-1-s}dv = \frac{1}{s}, \qquad \int_{v=1}^{\infty}v^{-1/2-s}dv = \frac{1}{s-1/2},
		\]
		giving their meromorphic continuations to $s = 0$. Note that this shows that for $\Delta = 1$ the regularization involving the extra parameter $s$ is really necessary.
		
		The integral in the second line involving the coefficients $c_{f}^{+}(n,h)$ converges locally uniformly and absolutely for $s \in \C$ and $z \in \H \setminus H_{\Delta,r}^{+}(f)$ by the same arguments as in the proof of \cite{bruinierhabil}, Proposition~2.8. The integrals over the sums corresponding to $Q(X) = 0$ and $Q(X) > 0$ in the third and fourth line can be treated in the same way, and they converge locally uniformly and absolutely for $s \in \C$ and $z \in \H$.
				
		The remaining integral in the fifth line can be written as
		\begin{align*}
		&\sum_{h \in L'/L}\sum_{n > 0}c_{f}^{-}(n,h)\int_{v = 1}^{\infty}\sum_{X\in L_{-|\Delta|n,rh} }\chi_{\Delta}(X)\beta_{1/2}^{c}(4\pi Q_{\Delta}(X)v)\exp\left(4\pi Q_{\Delta}(X_{z^{\perp}})v\right)v^{-s-1/2}dv, \notag
		\end{align*} 
		where the first two sums are finite. Hence, estimating 
		\[
		\beta_{1/2}^{c}(4\pi Q_{\Delta}(X) v) \leq 2 \exp(-4\pi Q_{\Delta}(X) v)
		\]
		and using $Q_{\Delta}(X) = Q_{\Delta}(X_{z}) + Q_{\Delta}(X_{z^{\perp}})$, it suffices to consider the integral
		\begin{align}\label{eq integral nonholomorphic principal part}
		\int_{v = 1}^{\infty}\sum_{X \in L_{-|\Delta|n,rh} }\exp\left(-4\pi Q_{\Delta}(X_{z})v\right)v^{-\Re(s)-1/2}dv.
		\end{align}
		For any $C \geq 0$ and any compact subset $K \subset \H$ the set
		\[
		\left\{X \in L_{-|\Delta|n,rh}: \exists z \in K \text{ with } |Q_{\Delta}(X_{z})| \leq C\right\}
		\]
		is finite, so if $z \in K \subset \H \setminus H_{\Delta,r}^{-}(f)$ then there is some $\varepsilon > 0$ such that $Q_{\Delta}(X_{z}) > \varepsilon$ for all $X \in L_{-|\Delta|n,rh}$. We can now estimate
		\[
		\sum_{X \in L_{-|\Delta|n,rh} }\exp\left(-4\pi Q_{\Delta}(X_{z})v\right)\leq e^{-2\pi \varepsilon v}e^{\pi n}\sum_{X \in L_{-|\Delta|n,rh} }\exp\left(-\pi (Q_{\Delta}(X_{z}) - Q_{\Delta}(X_{z^{\perp}}))\right)
		\]
		for $v \geq 1$. The series on the right-hand side converges since $X \mapsto Q_{\Delta}(X_{z}) - Q_{\Delta}(X_{z^{\perp}})$ is a positive definite quadratic form. In particular, the integral in \eqref{eq integral nonholomorphic principal part} converges absolutely and locally uniformly for $s \in \C$ and $z \in \H \setminus H_{\Delta,r}^{-}(f)$. This shows that the regularized theta integral exists. 
		
		By similar arguments as above we see that all iterated partial derivatives of $\Phi_{\Delta,r}(f,z)$ converge absolutely and locally uniformly on $\H \setminus (H^{+}_{\Delta,r}(f) \cup H^{-}_{\Delta,r}(f))$, so the Borcherds lift is a smooth function. The statement concerning the Laplacian can now be proven by interchanging $\Delta_{0} = \Delta_{0,z}$ with the integral, using the differential equation
		\[
		\Delta_{0,z}\Theta_{\Delta,r}(\tau,z) = 4v^{1/2}\overline{\Delta_{1/2,\tau}v^{-1/2}\overline{\Theta_{\Delta,r}(\tau,z)}},
		\]
		(which can be checked by a direct calculation) and then applying Stokes' theorem to move $\Delta_{1/2,\tau}$ from the theta function to $f(\tau)v^{-s}$ in the integral (compare \cite{bruinierhabil}, Lemma 4.3). It is easy to verify that the appearing boundary integrals vanish. By computing $\Delta_{1/2}(f(\tau)v^{-s})$ explicitly and using that $f$ is harmonic, we obtain 
		\[
		\Delta_{0}\Phi_{\Delta,r}(f,z) = -2\Res_{s = 0}\lim_{T \to \infty}\int_{\mathcal{F}_{T}}\langle f(\tau),\overline{\Theta_{\Delta,r}(\tau,z)}\rangle v^{-s}\frac{du \, dv}{v^{2}}.
		\]
		We have seen above that the integral on the right-hand side is holomorphic at $s = 0$ if $\Delta \neq 1$, and has a simple pole with residue $a_{f}^{+}(0,0)$ if $\Delta = 1$, coming from the first integral in the first line of $\varphi(z,s)$. This shows the Laplace equation for $\Phi_{\Delta,r}(f,z)$, which also implies that the Borcherds lift is real analytic by a standard regularity result for elliptic differential equations.
		
		The singularities of $\Phi_{\Delta,r}(f,z)$ can be determined using the following lemma with $n = -Q_{\Delta}(X)$ and $t = -Q_{\Delta}(X_{z^{\perp}})$.
	\end{proof}
	
	\begin{lemma}
		\begin{enumerate}
			\item The function
			\[
			I^{+}(t) = \int_{v=1}^{\infty}e^{-4\pi tv }\frac{dv}{v}
			\]
			is real analytic for $t > 0$ and has a singularity of type $-\log(t)$ at $t = 0$.
			\item For $n > 0$ the function
			\[
			I^{-}_{n}(t) = \int_{v=1}^{\infty}\sqrt{v}\beta_{1/2}^{c}(-4\pi nv)e^{-4\pi tv}\frac{dv}{v}
			\]
			is real analytic for $t > n$ and has a singularity of type $n^{-1/2}\arcsin\left(\sqrt{\frac{n}{t}}\right)$ at $t = n$.
		\end{enumerate}
	\end{lemma}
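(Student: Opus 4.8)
The plan is to handle both parts in the same way: exhibit the claimed singular function explicitly and reduce everything to the elementary fact that a function whose derivative extends real-analytically across the singular point itself extends real-analytically there (an additive constant being harmless). For part (1), the integral $I^{+}(t)=\int_{1}^{\infty}e^{-4\pi tv}\,\frac{dv}{v}$ converges locally uniformly and holomorphically for $\Re(t)>0$, so $I^{+}$ is real analytic on $(0,\infty)$. Differentiating under the integral gives $\frac{d}{dt}I^{+}(t)=-\frac{e^{-4\pi t}}{t}$, hence
\[
\frac{d}{dt}\big(I^{+}(t)+\log t\big)=\frac{1-e^{-4\pi t}}{t},
\]
whose right-hand side is entire in $t$. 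Integrating, $I^{+}(t)+\log t$ extends real-analytically across $t=0$, which is exactly the assertion $I^{+}\approx-\log t$.

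For part (2) the same strategy applies, but computing $\frac{d}{dt}I^{-}_{n}$ is the heart of the matter. The integral defining $I^{-}_{n}$ converges locally uniformly and holomorphically for $\Re(t)>n$, so $I^{-}_{n}$ is real analytic on $(n,\infty)$ and only the behaviour at $t=n$ is at issue. Since $\frac{d}{dt}\arcsin(\sqrt{n/t})=-\frac{\sqrt n}{2t\sqrt{t-n}}$, we have $\frac{d}{dt}\big(n^{-1/2}\arcsin(\sqrt{n/t}\,)\big)=-\frac{1}{2t\sqrt{t-n}}$, so it suffices to prove that $\frac{d}{dt}I^{-}_{n}(t)+\frac{1}{2t\sqrt{t-n}}$ is real analytic near $t=n$. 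First I would rewrite the factor $\beta_{1/2}^{c}$ through the imaginary error function, via the substitution $\sigma=\rho^{2}$ in its defining integral, as $\beta_{1/2}^{c}(-4\pi nv)=\frac{1}{2\sqrt{nv}}\operatorname{erfi}(2\sqrt{\pi nv})$. Then $I^{-}_{n}(t)=\frac{1}{2\sqrt n}\int_{1}^{\infty}v^{-1}\operatorname{erfi}(2\sqrt{\pi nv})e^{-4\pi tv}\,dv$, and differentiating in $t$ cancels the factor $v^{-1}$:
\[
\frac{d}{dt}I^{-}_{n}(t)=-\frac{2\pi}{\sqrt n}\int_{1}^{\infty}\operatorname{erfi}(2\sqrt{\pi nv})\,e^{-4\pi tv}\,dv.
\]

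The decisive step is an integration by parts in $v$. Because $\frac{d}{dv}\operatorname{erfi}(2\sqrt{\pi nv})=2\sqrt n\,v^{-1/2}e^{4\pi nv}$, the factor $e^{4\pi nv}$ combines with $e^{-4\pi tv}$ to produce $e^{-4\pi(t-n)v}$, collapsing the double exponential growth into a single $\beta_{1/2}$; together with the boundary contribution at $v=1$ this yields the closed form
\[
\frac{d}{dt}I^{-}_{n}(t)=-\frac{\operatorname{erfi}(2\sqrt{\pi n})}{2\sqrt n}\cdot\frac{e^{-4\pi t}}{t}-\frac{\beta_{1/2}(4\pi(t-n))}{t}.
\]
The first summand is manifestly real analytic for $t$ near $n>0$. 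For the second I would use the elementary identity $\beta_{1/2}(x)=\sqrt{\pi/x}\,\erfc(\sqrt x)$ (from the substitution $w=r^{2}$), so that $\beta_{1/2}(4\pi(t-n))=\frac{1}{2\sqrt{t-n}}\erfc(2\sqrt{\pi(t-n)})$. Subtracting the matching $\arcsin$-derivative replaces $\erfc$ by $\erf$, and since $\erf(y)/y$ is an even entire function, $\erf(2\sqrt{\pi(t-n)})=\sqrt{t-n}\cdot(\text{real analytic in }t-n)$; hence the $\sqrt{t-n}$ in the denominator cancels and $\frac{d}{dt}I^{-}_{n}(t)+\frac{1}{2t\sqrt{t-n}}$ is real analytic near $t=n$. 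Integrating as in part (1) gives the claim.

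I expect the main obstacle to be discovering this closed form: one has to spot that passing to $\operatorname{erfi}$ and then integrating by parts is precisely what reduces everything to $\beta_{1/2}(4\pi(t-n))$, whose $\erfc$-shape differs from the $\arcsin$ singularity only by the entire function $\erf$. The remaining points are routine: justifying differentiation under the integral sign and the integration by parts (all boundary terms as $v\to\infty$ vanish because $t>n$), and observing that the additive constant lost upon integrating a derivative is itself real analytic and so does not affect the conclusion.
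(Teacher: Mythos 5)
Your proof is correct, but it takes a genuinely different route from the paper's. The paper (following Borcherds' Lemma 6.1) replaces the lower limit $1$ of integration by $0$ --- after one integration by parts in case (1), and directly in case (2) since the integrand is $O(v^{-1/2})$ near $v=0$ --- the difference being an integral over $[0,1]$ that is real analytic in $t$; it then evaluates the resulting integral over $(0,\infty)$ \emph{exactly} by writing $\beta_{1/2}^{c}$ as an integral over $w\in[0,1]$, interchanging the order of integration, and computing $\int_{0}^{1}(t-nw^{2})^{-1/2}dw=n^{-1/2}\arcsin(\sqrt{n/t})$, so the arcsin appears in closed form in a single computation. You instead differentiate in $t$, find a closed form for $\frac{d}{dt}I_{n}^{-}$ by integrating by parts in $v$ after rewriting $\beta_{1/2}^{c}$ via $\operatorname{erfi}$, match the singular part against $\frac{d}{dt}\bigl(n^{-1/2}\arcsin\sqrt{n/t}\bigr)=-\tfrac{1}{2t\sqrt{t-n}}$ using $\beta_{1/2}(x)=\sqrt{\pi/x}\,\erfc(\sqrt{x})$, and integrate back; all the intermediate identities check out (in particular the closed form $\frac{d}{dt}I_{n}^{-}(t)=-\frac{\operatorname{erfi}(2\sqrt{\pi n})}{2\sqrt{n}}\frac{e^{-4\pi t}}{t}-\frac{\beta_{1/2}(4\pi(t-n))}{t}$ is right, and the leftover $\erf(2\sqrt{\pi(t-n)})/(2t\sqrt{t-n})$ is indeed an entire function of $t-n$ divided by $t$). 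The paper's method is shorter and produces the singularity type with no guessing; yours requires knowing the answer in advance but makes the real-analytic error term completely explicit, avoids the Fubini interchange over an unbounded domain, and handles both parts by the same uniform ``differentiate, extend, integrate back'' scheme.
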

	
	\begin{proof}
		We follow the proof of \cite{borcherds}, Lemma 6.1. Using partial integration and the fact that $\log(v)$ is integrable near $v = 0$, we see that 
		\[
		I^{+}(t) \approx  4\pi t \int_{v = 0}^{\infty}e^{-4\pi tv }\log(v)dv = \int_{v =0}^{\infty}e^{-v}\log\left(\frac{v}{4\pi t}\right)dv \approx -\log(t).
		\]
		For $n > 0$, we use that $\sqrt{v}\beta_{1/2}^{c}(-4\pi n v) = O(\sqrt{v})$ as $v \to 0$ and compute
		\begin{align*}
		I_{n}^{-}(t) &\approx \int_{v = 0}^{\infty}\left(2\sqrt{v}\int_{w = 0}^{1}e^{4\pi nv w^{2}}dw \right)e^{-4\pi t v }\frac{dv}{v} \\
		 &= \int_{w=0}^{1}\frac{1}{\sqrt{t-nw^{2}}}dw \\
		 &= n^{-1/2}\arcsin\left( \sqrt{\frac{n}{t}}\right).
		\end{align*}
		This finishes the proof of the lemma and of Theorem \ref{theorem Borcherds lift}.
	\end{proof}

\section{The Fourier expansion of the Borcherds lift} \label{section Fourier expansion}

Next, we compute the Fourier expansion of the Borcherds lift. To this end, we first need to introduce a special function which captures the arcsin singularities of $\Phi_{\Delta,r}(f,z)$ along vertical geodesics.

	For $a \geq 1$ and $\Re(s) > -1$ we define
	\begin{align}\label{eq arcsin}
	\arcsin_{s}\left(\frac{1}{\sqrt{a}}\right) = \int_{0}^{1}\frac{1}{\sqrt{a-t^{2}}} \left(\frac{1-t^{2}}{a - t^{2}} \right)^{s}dt.
	\end{align}
	The function $\arcsin_{s}$ is holomophic in $s$ and satisfies 
	\[
	\arcsin_{0}(1/\sqrt{a}) = \arcsin(1/\sqrt{a}).
	\]
	The factor $(1-t^{2})^{s}$ ensures that the integral converges at $a = 1$ if $\Re(s) \geq 1/2$, and the factor $(a-t^{2})^{s}$ in the denominator was added to make the estimate 
	\begin{align}\label{eq arcsin estimate}
	|\arcsin_{s}(1/\sqrt{a})| \leq (a-1)^{-\Re(s)-1/2}
	\end{align} for $a > 1$ and $\Re(s) > 0$ hold. Note that for $\Re(s) > -1$ we can write
	\begin{align}\label{eq arcsin by incomplete beta}
	\arcsin_{s}\left(\frac{1}{\sqrt{a}}\right) = \frac{\sqrt{\pi}\,\Gamma(s+1)}{2\Gamma(s+1/2)}B(1/a;s+1/2,1/2),
	\end{align}
	where 
	\[
	B(z;\alpha,\beta) = \int_{0}^{z}u^{\alpha-1}(1-u)^{\beta-1}du
	\]
	is the incomplete beta function.	
	\begin{lemma}\label{lemma Fourier expansion arcsin series}
	For $z = x + iy \in \mathbb{H}$ and $\Re(s) > 0$ we have the Fourier expansion
	\begin{align*}
	&\sum_{\ell \in \Z}\arcsin_{s}\left(\frac{y}{\sqrt{(x+\ell)^{2} + y^{2}} }\right)= y\frac{\sqrt{\pi}\,\Gamma(s)}{\Gamma(s+1/2)}  \\
	& \qquad+ 2y\frac{\sqrt{\pi}}{\Gamma(s+1/2)}\sum_{n \neq 0}(\pi |n|y)^{s}\left(\int_{0}^{1}(1-t^{2})^{s/2}K_{s}\left(2\pi |n| y\sqrt{1-t^{2}}\right)dt\right) \cos(2\pi n x),
	\end{align*}
	where $K_{s}$ denotes the $K$-Bessel function of order $s$. For $\Re(s) > -1$ the series on the right-hand side converges absolutely and locally uniformly in $s$. In particular, the left-hand side has a meromorphic continuation to $\Re(s) > -1$ with a simple pole at $s = 0$.
	\end{lemma}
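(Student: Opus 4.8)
The plan is to compute the Fourier expansion of the left-hand side directly, by viewing it as the periodization in $x$ (with period $1$) of the single function $g(x,y) = \arcsin_s\!\left(y/\sqrt{x^2+y^2}\right)$ and then unfolding. Since $g$ is even in $x$, the estimate \eqref{eq arcsin estimate}, applied with $a = 1 + x^2/y^2$ so that $g(x,y)\ll (y/|x|)^{2\Re(s)+1}$ as $|x|\to\infty$, shows that for $\Re(s)>0$ the series $\sum_{\ell}g(x+\ell,y)$ converges absolutely and locally uniformly, hence defines a smooth $1$-periodic function whose $n$-th Fourier coefficient, by the usual unfolding argument, equals
\[
c_n(y) = \int_{-\infty}^{\infty} \arcsin_s\!\left(\frac{y}{\sqrt{x^2+y^2}}\right) e(-nx)\,dx.
\]

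First I would insert the defining integral \eqref{eq arcsin}. Writing $a = (x^2+y^2)/y^2$ and simplifying $a-t^2 = (x^2 + y^2(1-t^2))/y^2$, the integrand collapses to
\[
\arcsin_s\!\left(\frac{y}{\sqrt{x^2+y^2}}\right) = y\int_0^1 \frac{\big(y^2(1-t^2)\big)^{s}}{\big(x^2 + y^2(1-t^2)\big)^{s+1/2}}\,dt.
\]
Setting $b = y\sqrt{1-t^2}$ and exchanging the $t$- and $x$-integrations (legitimate by absolute convergence for $\Re(s)>0$), the computation of $c_n(y)$ reduces to the classical Fourier transform of $(x^2+b^2)^{-s-1/2}$.

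For $n\neq 0$ I would evaluate this transform via the Gamma-integral $(x^2+b^2)^{-\nu} = \Gamma(\nu)^{-1}\int_0^\infty u^{\nu-1}e^{-(x^2+b^2)u}\,du$, carry out the resulting Gaussian integral in $x$, and recognize the remaining $u$-integral as the standard Bessel representation $\int_0^\infty u^{\mu-1}e^{-\alpha u-\gamma/u}\,du = 2(\gamma/\alpha)^{\mu/2}K_\mu(2\sqrt{\alpha\gamma})$. With $\nu = s+1/2$ this gives
\[
\int_{-\infty}^{\infty}\frac{e(-nx)}{(x^2+b^2)^{s+1/2}}\,dx = \frac{2\sqrt{\pi}}{\Gamma(s+1/2)}\left(\frac{\pi|n|}{b}\right)^{s}K_s(2\pi|n|b).
\]
Substituting $b = y\sqrt{1-t^2}$ and simplifying the powers of $y$ and $(1-t^2)$ produces exactly the coefficient $\frac{2y\sqrt\pi}{\Gamma(s+1/2)}(\pi|n|y)^s\int_0^1(1-t^2)^{s/2}K_s(2\pi|n|y\sqrt{1-t^2})\,dt$, and pairing $n$ with $-n$ turns $e(-nx)$ into $\cos(2\pi nx)$. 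For $n=0$ the inner integral is instead the elementary beta integral $\int_{-\infty}^\infty(x^2+b^2)^{-s-1/2}\,dx = b^{-2s}\sqrt\pi\,\Gamma(s)/\Gamma(s+1/2)$, whose $b$-dependence cancels the factor $b^{2s}$, leaving $c_0(y) = y\sqrt\pi\,\Gamma(s)/\Gamma(s+1/2)$, the claimed constant term.

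Finally, for the analytic continuation I would argue that the series on the right converges for $\Re(s)>-1$: the exponential decay $K_s(w) = O(e^{-w})$ for large $w$ makes the sum over $n$ converge rapidly for fixed $y>0$, while near $t=1$ the small-argument asymptotics of $K_s$ show that $(1-t^2)^{s/2}K_s(c\sqrt{1-t^2})$ behaves like a constant for $\Re(s)>0$ and like $(1-t^2)^s$ for $\Re(s)<0$, which is integrable precisely when $\Re(s)>-1$. Hence the right-hand side is holomorphic for $\Re(s)>-1$ apart from the simple pole of $\Gamma(s)$ at $s=0$ carried by the constant term, which yields the asserted meromorphic continuation of the left-hand side. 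I expect the main obstacle to be the bookkeeping that justifies the two interchanges of integration and summation and pins down the exact half-plane of convergence; the estimate \eqref{eq arcsin estimate} and the incomplete-beta form \eqref{eq arcsin by incomplete beta} are the tools that make this part routine.
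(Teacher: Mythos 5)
Your proposal is correct and follows essentially the same route as the paper: unfold the periodization to a single Fourier integral, insert the defining integral of $\arcsin_{s}$, interchange the order of integration, identify the inner integral as a $K$-Bessel function (the $n=0$ term giving the beta/Gamma evaluation), and use the small-argument asymptotics of $K_{s}$ for the continuation to $\Re(s)>-1$ with the pole at $s=0$ coming from $\Gamma(s)$ in the constant term. The only cosmetic difference is that you re-derive the Fourier transform of $(x^{2}+b^{2})^{-s-1/2}$ via the Gamma--Gaussian--Bessel chain, whereas the paper rescales and quotes the integral representation of $K_{s}$ directly.
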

	
	\begin{proof}
		The estimate \eqref{eq arcsin estimate} shows that the series on the left-hand side converges absolutely for $\Re(s) > 0$. It is $1$-periodic and even in $x$ and hence has a Fourier expansion of the form $\sum_{n \in \Z}a(n,y)\cos(2\pi nx)$ with coefficients
		\begin{align*}
		a(n,y) &= 
		\int_{-\infty}^{\infty}\arcsin_{s}\left(\frac{1}{\sqrt{(u/y)^{2} + 1} }\right)\cos(2\pi nu)du.
		\end{align*}
		We plug in the definition of $\arcsin_{s}$ and interchange the order of integration to find
		\begin{align*}
		a(n,y) &= \int_{0}^{1}\left(\int_{-\infty}^{\infty}\frac{\cos(2\pi n u)}{((u/y)^{2}+1-t^{2})^{s+1/2}} du \right) (1-t^{2})^{s} dt \\
		&=  y\int_{0}^{1}\left(\int_{-\infty}^{\infty}\frac{\cos\left(2\pi n uy\sqrt{1-t^{2}}\right)}{(u^{2}+1)^{s+1/2}} du \right) dt.
		\end{align*}
		For $n = 0$ the inner integral can be evaluated as
		\[
		\int_{-\infty}^{\infty}\frac{1}{(u^{2}+1)^{s+1/2}}du = \frac{\sqrt{\pi}\, \Gamma(s)}{\Gamma(s+1/2)},
		\]
		by a direct calculation using the definition of the Gamma function. For $n \neq 0$ we can replace $n$ by $|n|$, and then the inner integral can be computed using the representation
		\[
		K_{s}(x) = \frac{2^{s-1}\Gamma(s+1/2)}{\sqrt{\pi}\, x^{s}}\int_{-\infty}^{\infty}\frac{\cos(xu)}{(u^{2}+1)^{s+1/2}}du,
		\]
		which is valid for $\Re(s) > -1/2$ and $x > 0$ (see \cite[9.6.25]{abramowitz}). Note that $K_{s} = K_{-s}$.
		
		The asymptotics $K_{0}(z) \sim -\log(z)$ and $K_{s}(z) \sim \frac{1}{2}\Gamma(s)(\frac{1}{2}z)^{-s}$ for $\Re(s) > 0$ fixed as $z \to 0$ (see \cite[9.6.8, 9.6.9]{abramowitz}) show that the integral in the series is holomorphic for $\Re(s) > -1$. This completes the proof.
	\end{proof}

	\begin{proposition}	\label{proposition Fourier expansion}
		Let $f \in H_{1/2,\tilde{\rho}^{*}_{L}}$. For $y \gg 0$ sufficiently large, the Borcherds lift of $f$ has the Fourier expansion
		\begin{align*}
		\Phi_{\Delta,r}(f,z) &=
			-4\sum_{m=1}^{\infty}c_{f}^{+}(|\Delta|m^{2}/4N,rm)\sum_{b (\Delta)}\left( \frac{\Delta}{b}\right)\log|1-e(mz + b/\Delta)| \\
			&\quad+ 2\sum_{m = 1}^{\infty}c_{f}^{-}(|\Delta|m^{2}/4N,rm)\left( \frac{|\Delta|m^{2}}{4N}\right)^{-1/2}\sum_{b (\Delta)}\left(\frac{\Delta}{b}\right)\mathcal{F}(mz + b/\Delta) \\
			&\quad+ \begin{dcases} 
				\begin{aligned}
					&\sqrt{N}y(f,\theta_{1/2})^{\reg} - c_{f}^{+}(0,0)\left(\log(4\pi Ny^{2})+\Gamma'(1) \right) \\
					&\quad - \sqrt{N}y\, c_{f}^{-}(0,0)\left(\log(4\pi)-\log(Ny^{2})+\Gamma'(1)\right)
				\end{aligned}
				& \text{if }\Delta = 1, \\
			2\sqrt{\Delta}\, L_{\Delta}(1) \big(c_{f}^{+}(0,0) + \sqrt{N}y\, c_{f}^{-}(0,0)\big)  & \text{if }\Delta > 1, \\
			0 & \text{if } \Delta < 0,
			\end{dcases}
		\end{align*}
		where $\theta_{1/2}(\tau) = \sum_{h \in L'/L}\sum_{n \in h + 2N\Z}e(n^{2}\tau/4N)\e_{h}$ is the Jacobi theta function and $L_{\Delta}(s) = \sum_{n \geq 1}\left( \frac{\Delta}{n}\right)n^{-s}$ for $\Re(s)>1$ is a Dirichlet $L$-function. Here the function $\mathcal{F}(z): \H \to \R$ is defined by
		\begin{align*}
		\mathcal{F}(z) = \lim_{s \to 0}\left(\sum_{\ell \in \Z}\arcsin_{s}\left( \frac{y}{\sqrt{(x+\ell)^{2} + y^{2}}}\right)-y\frac{\sqrt{\pi}\,\Gamma(s)}{\Gamma(s+1/2)}\right),
		\end{align*}
		compare Lemma \ref{lemma Fourier expansion arcsin series}.
	\end{proposition}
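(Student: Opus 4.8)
The plan is to compute the Fourier expansion by unfolding the regularized theta integral \eqref{eq definition theta lift}, following the method of Borcherds \cite{borcherds} and Bruinier \cite{bruinierhabil} in the twisted form worked out by Bruinier and Ono \cite{bruinieronoheegnerdivisors}. The contributions of the holomorphic part and of the constant coefficients will be essentially as in \cite{bruinieronoheegnerdivisors}; the genuinely new ingredient is the non-holomorphic part, whose unfolded $v$-integrals I would match against the special function $\arcsin_{s}$ from \eqref{eq arcsin} and evaluate by means of Lemma \ref{lemma Fourier expansion arcsin series}. Throughout I would keep $\Re(s)$ large, where everything converges absolutely by the estimates from the proof of Theorem \ref{theorem Borcherds lift}, and pass to $\CT_{s=0}$ only at the end.

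First I would rewrite the Siegel theta function $\Theta_{\Delta,r}(\tau,z)$ of \eqref{eq definition theta function} by Borcherds' partial Poisson summation with respect to the primitive isotropic vector $z_{0} \in L$ spanning the cusp $\infty$ of $\Gamma_{0}(N)$ (the direction towards which the line $z$ tends as $y \to \infty$), as in \cite{borcherds}, Section~5. This replaces the sum over $X \in L+rh$ by a sum over the rank-one sublattice $K = (L \cap z_{0}^{\perp})/\Z z_{0}$, weighted by a Gaussian factor and an additive phase depending on $z$; the hypothesis $y \gg 0$ guarantees that $z$ lies in a fixed Weyl chamber, so that the sign conventions in this rewriting stay constant. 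Since the resulting expression is assembled from $\tilde{\Gamma}_{\infty} = \langle \tilde{T}\rangle$-translates, the integral over the truncated fundamental domain $\mathcal{F}_{T}$ unfolds, in the limit $T \to \infty$, into an integral of $\langle f(\tau),\cdots\rangle v^{-s}$ over the strip $\{0 \le u \le 1,\ 0 < v < \infty\}$; the compact piece $\mathcal{F}_{1}$ and the boundary terms at $v = T$ contribute nothing to $\CT_{s=0}$, by the same estimates used in the proof of Theorem \ref{theorem Borcherds lift}.

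Next I would carry out the $u$-integral, which by orthogonality of the characters $e(nu)$ selects a single Fourier coefficient of $f$ for each vector $\lambda \in K$ in the sublattice sum, leaving $v$-integrals that fall into three families. The positive holomorphic coefficients $c_{f}^{+}(|\Delta|m^{2}/4N,rm)$ give exponential $v$-integrals whose logarithmic values, summed over $m \ge 1$ and over the residues $b \ (\Delta)$ with the weight $\left(\frac{\Delta}{b}\right)$ produced by the genus character $\chi_{\Delta}$, reproduce the first line $-4\sum_{m \ge 1}c_{f}^{+}(|\Delta|m^{2}/4N,rm)\sum_{b(\Delta)}\left(\frac{\Delta}{b}\right)\log|1-e(mz+b/\Delta)|$, exactly as in \cite{bruinieronoheegnerdivisors}. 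The coefficients $c_{f}^{\pm}(0,0)$, arising from the $\lambda = 0$ part of the sublattice sum, give the $\Delta$-dependent constant and linear-in-$y$ terms: for $\Delta > 1$ the isotropic-direction sum converges and resums to the special value $2\sqrt{\Delta}\,L_{\Delta}(1)$ (and the relevant Gauss sums vanish for $\Delta < 0$), while for $\Delta = 1$ the divergent integrals $\int_{1}^{\infty}v^{-1-s}dv$ and $\int_{1}^{\infty}v^{-1/2-s}dv$ isolated in the proof of Theorem \ref{theorem Borcherds lift} must be combined with a Kronecker-limit-type evaluation of the $\lambda = 0$ term, producing the regularized pairing $\sqrt{N}\,y\,(f,\theta_{1/2})^{\reg}$ together with the logarithmic and $\Gamma'(1)$ terms.

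The new and most delicate family comes from the positive non-holomorphic coefficients $c_{f}^{-}(|\Delta|m^{2}/4N,rm)$, whose $v$-integrals involve $\sqrt{v}\,\beta_{1/2}^{c}$. Retaining the factor $v^{-s}$ and applying the substitution $\sqrt{v}\,\beta_{1/2}^{c}(-4\pi n v) = 2\sqrt{v}\int_{0}^{1}e^{4\pi n v w^{2}}dw$ from the proof of the unlabelled lemma at the end of Section \ref{section Borcherds lift} turns each such integral, for fixed $m$ and $b$, into the $s$-deformed integral defining $\arcsin_{s}$ in \eqref{eq arcsin}; summing over the remaining lattice direction $\ell \in \Z$ then produces the series $\sum_{\ell}\arcsin_{s}(y/\sqrt{(x+\ell)^{2}+y^{2}})$ of Lemma \ref{lemma Fourier expansion arcsin series}, evaluated at $mz + b/\Delta$. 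The main obstacle is the passage to $s = 0$: since the non-holomorphic principal part grows exponentially, both the unfolding and the interchange of the $v$-integration with the lattice sum are legitimate only for $\Re(s)$ large, and each of these series carries a simple pole at $s = 0$ with polar part $y\,\sqrt{\pi}\,\Gamma(s)/\Gamma(s+1/2)$ by Lemma \ref{lemma Fourier expansion arcsin series}. For $\Delta > 1$ this polar part vanishes after summing over $b \pmod{\Delta}$, because $\sum_{b(\Delta)}\left(\frac{\Delta}{b}\right) = 0$, leaving exactly the finite combination $2(|\Delta|m^{2}/4N)^{-1/2}\sum_{b}\left(\frac{\Delta}{b}\right)\mathcal{F}(mz+b/\Delta)$; for $\Delta = 1$ the leftover pole must be absorbed into $\CT_{s=0}$ together with the divergent constant integrals from the previous step. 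Verifying this pole cancellation and justifying the interchange of the limit $s \to 0$, the lattice sum, and the $v$-integration, using the estimate \eqref{eq arcsin estimate} and the continuation provided by Lemma \ref{lemma Fourier expansion arcsin series}, is the technical heart of the argument.
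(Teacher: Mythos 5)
Your overall strategy coincides with the paper's: Borcherds-style partial Poisson summation against the rank-one sublattice $K$, unfolding onto the strip for $y \gg 0$, evaluation of the $u$-integral, and the three families of $v$-integrals you describe, with the holomorphic and zero contributions handled as in \cite{bruinieronoheegnerdivisors} and the new contribution ultimately matched to $\mathcal{F}$ via Lemma \ref{lemma Fourier expansion arcsin series}. (One small point: the vanishing of the $c_{f}^{\pm}(0,0)$-terms for $\Delta<0$ comes from the fact that $f$ then transforms under $\rho_{L}$, whose zero component is trivial, not from a vanishing Gauss sum.) Where your account diverges from how the computation actually runs is the treatment of the positive non-holomorphic coefficients. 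After the partial Poisson summation the surviving lattice sum is over $K'$ (indexed by $m$) together with the dual variable $n\geq 1$ produced by the unfolding; for fixed $m$, $n$ and $w$ the $v$-integral is not the $s$-deformed integral \eqref{eq arcsin} but a two-sided exponential integral which converges at $s=0$ and evaluates to $2K_{0}\bigl(2\pi y|n|\sqrt{-4NQ(X)(1-w^{2})}\bigr)$. Summing over $n\neq 0$ and over the residues $b\,(\Delta)$ coming from the Gauss sum \eqref{eq Gauss sum} produces exactly the \emph{non-constant} part of the Fourier expansion in Lemma \ref{lemma Fourier expansion arcsin series}, which is by definition $\mathcal{F}(mz+b/\Delta)$. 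The configuration-space sum $\sum_{\ell}\arcsin_{s}$ never appears as such; the lemma is used in the direction opposite to the one you describe.

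This is not merely cosmetic, because it undercuts your proposed pole analysis. The polar term $y\sqrt{\pi}\,\Gamma(s)/\Gamma(s+1/2)$ of $\sum_{\ell}\arcsin_{s}$ is its constant Fourier mode, and that mode simply does not occur in the unfolded integral attached to $X\neq 0$ (only $n\geq 1$ enters there; the constant modes are entirely accounted for by the $X=0$ term and the $\theta_{1/2}$-term). Hence there is no pole to cancel via $\sum_{b(\Delta)}\left(\frac{\Delta}{b}\right)=0$, and your claim that for $\Delta=1$ a leftover pole coming from the coefficients $c_{f}^{-}(|\Delta|m^{2}/4N,rm)$ must be absorbed into $\CT_{s=0}$ is spurious: such a pole could not be cancelled by the $c_{f}^{\pm}(0,0)$-terms in the third line, and if it were genuinely present the stated formula for $\Delta=1$ would be false. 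If you insist on organizing the computation through the configuration-space sum you must first justify that grouping of lattice vectors and separate off its constant mode by hand; the direct route (Bessel evaluation of the $v$-integral as in \eqref{eq contribution positive nonholomorphic}, then identification with $\mathcal{F}$ by Lemma \ref{lemma Fourier expansion arcsin series}) avoids the difficulty entirely.
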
 
	
	\begin{remark}
		\begin{enumerate}
			\item The singularities of $\Phi_{\Delta,r}(f,z)$ at Heegner points and geodesics given by semi-circles centered at the real line are not reproduced in the Fourier expansion above, but the part involving the function $\mathcal{F}$ captures the singularities along vertical geodesics.
			\item By Dirichlet's class number formula we have
				\[
				L_{\Delta}(1) = \frac{1}{\sqrt{\Delta}}h(\Delta)\log(\epsilon_{\Delta}) = \frac{1}{2}\tr_{1}(\Delta)
				\]
				for $\Delta > 1$, where $h(\Delta)$ is the narrow class number of $\Q(\sqrt{\Delta})$, $\epsilon_{\Delta}$ is the smallest unit $>1$ of norm $1$, and $\tr_{1}(\Delta)$ is the $\Delta$-th trace of the constant $1$ function as defined in the introduction.
		\end{enumerate}
	\end{remark}
	
	\begin{proof}[Proof of Proposition \ref{proposition Fourier expansion}]
		The proof follows the arguments of \cite{bruinieronoheegnerdivisors}, Theorem 5.3. First, by \cite{bruinieronoheegnerdivisors}, Theorem 4.8, we can write
		\begin{align*}
		v^{-1/2}\overline{\Theta_{\Delta,r}(\tau,z)} &= \delta_{\Delta = 1}\frac{\sqrt{N}y}{\sqrt{|\Delta|}}\theta_{1/2}(\tau)  \\
		& \quad + \frac{\sqrt{N}y}{\sqrt{|\Delta|}}\sum_{n \geq 1}\sum_{M \in \tilde{\Gamma}_{\infty}\setminus \tilde{\Gamma}}\left[\exp\left( -\frac{\pi n^{2}Ny^{2}}{|\Delta|v}\right)\Xi(\tau,\mu,n,0)\right]\bigg|_{1/2,\tilde{\rho}_{L}^{*}}M,
		\end{align*}
		where $\mu = \big(\begin{smallmatrix}x & -x^{2} \\ -1 & -x \end{smallmatrix}\big)$ and
		\[
		\Xi(\tau,\mu,n,0) = \left(\frac{\Delta}{n}\right)\overline{\varepsilon}\sqrt{|\Delta|}\sum_{h \in K'/K}\sum_{\substack{X \in K+rh \\ Q(X) \equiv \Delta Q(h) \, (\Delta)}} e\left( -Q_{\Delta}(X)\tau + n(X,\mu)_{\Delta}\right)\e_{h},
		\]
		with $\varepsilon = 1$ if $\Delta > 0$ and $\varepsilon = i$ if $\Delta < 0$. Further, $K$ denotes the one-dimensional negative definite sublattice 
	\[
	K = \left\{\begin{pmatrix}b & 0 \\ 0 & -b \end{pmatrix}: b \in \Z\right\}
	\]
	of $L$. Its dual lattice is given by
	\[
	K' = \left\{\begin{pmatrix}b/2N & 0 \\ 0 & -b/2N \end{pmatrix}: b \in \Z\right\}.
	\]
  Inserting this into the definition of the theta lift, the unfolding argument yields
		\begin{align*}
		\Phi_{\Delta,r}(f,z) &= \delta_{\Delta = 1}\frac{\sqrt{N}y}{\sqrt{|\Delta|}}(f,\theta_{1/2})^{\reg} +\CT_{s = 0}\Phi^{0}_{\Delta,r}(f,z,s),
		\end{align*}
		where
		\[
		\Phi^{0}_{\Delta,r}(f,z,s) = \frac{2\sqrt{N}y}{\sqrt{|\Delta|}}\sum_{n\geq 1}\int_{v=0}^{\infty}\int_{u=0}^{1}\exp\left(-\frac{\pi n^{2}Ny^{2}}{|\Delta|v}\right)\langle f,\Xi(\tau,\mu,n,0)\rangle du \frac{dv}{v^{s+3/2}}.
		\]
		The unfolding is justified for $y \gg 0$ by the same arguments as in \cite{borcherds}, Theorem 7.1. Let us write
		\[
		f(\tau) = \sum_{h \in L'/L}\sum_{n \in \Q}c_{f}(n,h,v)e(n\tau)\e_{h}
		\]
		for the Fourier expansion of $f$ for the moment. Since $\Delta$ is fundamental, the conditions $X \in K + rh$ and $Q(X)\equiv \Delta Q(h) \mod \Delta$ are equivalent to $X =\Delta X'$ and $rX' \in K + h$ for some $X' \in K'$. Plugging in the definition of $\Xi(\tau,n,\mu,0)$, and evaluating the integral over $u$, we obtain
		\begin{align*}
			\Phi_{\Delta,r}^{0}(f,z,s) &= 2\sqrt{N}y\varepsilon\sum_{X \in K'}\sum_{n\geq 1}\left( \frac{\Delta}{n}\right)e\left(-\sgn(\Delta)n(X,\mu)\right) \\
			&\quad \times \int_{v = 0}^{\infty}c_{f}(-|\Delta|Q(X),rX,v)\exp\left( -\frac{\pi n^{2}Ny^{2}}{|\Delta|v} + 4\pi |\Delta|Q(X)v\right)\frac{dv}{v^{s+3/2}}.
		\end{align*}
		Now we use the explicit form of the Fourier coefficients of $f$. The summand for $X = 0$ in $\Phi^{0}_{\Delta,r}(f,z,s)$ is given by
		\begin{align*}
			&2\sqrt{N}y\varepsilon\sum_{n\geq 1}\left( \frac{\Delta}{n}\right)\int_{v = 0}^{\infty}\left(c_{f}^{+}(0,0) + c_{f}^{-}(0,0)v^{1/2} \right)\exp\left( -\frac{\pi n^{2}Ny^{2}}{|\Delta|v} \right) \frac{dv}{v^{s+3/2}} \\
			&= 2\varepsilon \left( Ny^{2}\right)^{-s}\bigg(c_{f}^{+}(0,0)\left(\frac{\pi}{|\Delta|}\right)^{-s-1/2}\Gamma(s+1/2)L_{\Delta}(2s+1) \\
			&\qquad \qquad \qquad \quad  + \sqrt{N}y \, c_{f}^{-}(0,0)\left(\frac{\pi}{|\Delta|}\right)^{-s}\Gamma(s)L_{\Delta}(2s)\bigg).
		\end{align*}
		For $\Delta < 0$ the harmonic Maass form $f$ transforms with $\rho_{L}$, which implies that its zero component vanishes, so $c_{f}^{\pm}(0,0) = 0$. For $\Delta > 0$ the completed Dirichlet $L$-function 
		\[
		\Lambda_{\Delta}(s) = (\pi/\Delta)^{-s/2}\Gamma(s/2)L_{\Delta}(s)
		\]
		satisfies the functional equation $\Lambda_{\Delta}(1-s) = \Lambda_{\Delta}(s)$. It is holomorphic at $s = 1$ if $\Delta > 1$. Taking the constant term at $s = 0$, we get the contribution in the large bracket in the proposition.

		For $X \in K'$ with $X \neq 0$ we have $-|\Delta|Q(X) > 0$. We can write
		\[
		c_{f}(n,h,v) = c_{f}^{+}(n,h) + c_{f}^{-}(n,h)\sqrt{v}\beta_{1/2}^{c}(-4\pi n v)
		\]
		for $n > 0$. The contribution coming from the coefficients $c_{f}^{+}(n,h)$ can be computed as in \cite{bruinieronoheegnerdivisors}, Theorem 5.2, and yields the first line of the Fourier expansion. Plugging in the definition of $\beta_{1/2}^{c}(s)$, it remains to compute
		\begin{align}\label{eq contribution positive nonholomorphic}
		\begin{split}
			&4\sqrt{N}y\varepsilon\sum_{\substack{X \in K' \\ X \neq 0}}c_{f}^{-}(-|\Delta| Q(X),rX)\sum_{n \geq 1}\left( \frac{\Delta}{n}\right)e\left(-\sgn(\Delta)n(X,\mu)\right) \\
			&\quad \times \int_{v = 0}^{\infty}\left(\int_{w =0}^{1}\exp(-4\pi |\Delta|Q(X) w^{2}v)dw\right)\exp\left( -\frac{\pi n^{2}Ny^{2}}{|\Delta|v} + 4\pi |\Delta|Q(X)v\right)\frac{dv}{v}. 
			\end{split}
		\end{align}
		If we change the order of integration, the inner integral can be computed in terms of the $K$-Bessel function by \cite[(3.471.9)]{tablesofintegrals}, giving
		\[
		\int_{v = 0}^{\infty}\exp\left(4\pi |\Delta|Q(X)(1-w^{2})v -\frac{\pi n^{2}Ny^{2}}{|\Delta|v}\right)\frac{dv}{v} = 2K_{0}\left(2\pi y |n|\sqrt{-4NQ(X)(1-w^{2})}\right).
		\]
		Write $X = \left(\begin{smallmatrix}m/2N & 0 \\ 0 & -m/2N \end{smallmatrix}\right) \in K' \setminus \{0\}$ with $m \in \Z, m \neq 0$. Then $-Q(X) = m^{2}/4N$ and $-(X,\mu) = mx$. We use the evaluation of the Gauss sum 
		\begin{align}\label{eq Gauss sum}
		\sum_{b(\Delta)}\left( \frac{\Delta}{b}\right)e(bn/|\Delta|) = \varepsilon \left(\frac{\Delta}{n}\right)\sqrt{|\Delta|}.
		\end{align}
		Then the expression in \eqref{eq contribution positive nonholomorphic} becomes
		\begin{align*}
		& 2\sum_{m = 1}^{\infty}a_{f}(|\Delta|m^{2}/4N,rm)\left( \frac{|\Delta|m^{2}}{4N}\right)^{-1/2}\sum_{b(\Delta)}\left(\frac{\Delta}{b}\right) \\
		&\quad \times 2my\sum_{n \neq 0}e\left(n(mx+b/\Delta)\right)\int_{0}^{1}K_{0}\left(2\pi my|n|\sqrt{1-w^{2}} \right)dw.
		\end{align*}
		By Lemma \ref{lemma Fourier expansion arcsin series} the second line agrees with $\mathcal{F}(mz + b/\Delta)$, which finishes the proof.
		\end{proof}
		
		If the coefficients $c_{f}^{+}(n,h)$ vanish for $n < 0$, then $\Phi_{\Delta,r}(f,z)$ does not have singularities at Heegner points, and extends to a continuous function on $\H$ which is not differentiable along the geodesics in $H_{\Delta,r}^{-}(f)$. In this case, we can derive the Fourier expansion of $\Phi_{\Delta,r}(f,z)$ on $\H$, without assuming $y \gg 0$ to be large enough.
	
		\begin{corollary}\label{corollary Borcherds lift Fourier expansion additional part}
			Let $f \in H_{1/2,\tilde{\rho}^{*}_{L}}$, and suppose that $c_{f}^{+}(n,h) = 0$ for all $n <0$ and $h \in L'/L$. Then the Fourier expansion of the Borcherds lift $\Phi_{\Delta,r}(f,z)$ on $\H$ is given by the formula from Proposition \ref{proposition Fourier expansion} plus the expression
			\begin{align}\label{eq Borcherds lift Fourier expansion additional part}
			-2\sum_{h \in L'/L}\sum_{n > 0}c_{f}^{-}(n,h)n^{-1/2}\sum_{\substack{X \in L_{-|\Delta|n,rh} \\ a \neq 0}}\chi_{\Delta}(X)\mathbf{1}_{X}(z)\left(\arctan\left( \frac{\sqrt{4|\Delta|n}}{-\sgn(a)p_{X}(z)}\right) + \frac{\pi}{2}\right),
			\end{align}
			where $\mathbf{1}_{X}(z)$ denotes the characteristic function of the bounded component of $\H \setminus c_{X}$.
		\end{corollary}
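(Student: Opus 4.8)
The plan is to prove the identity by comparison, using the identity theorem for real-analytic functions. Write the asserted right-hand side as $\Psi(z) = P(z) + A(z)$, where $P$ is the Fourier expansion of Proposition \ref{proposition Fourier expansion} and $A$ is the sum \eqref{eq Borcherds lift Fourier expansion additional part}. Since $c_f^+(n,h) = 0$ for $n < 0$, the Heegner-point set $H^+_{\Delta,r}(f)$ is empty, so by Theorem \ref{theorem Borcherds lift} the lift $\Phi_{\Delta,r}(f,z)$ is a $\Gamma_0(N)$-invariant function that is continuous on all of $\H$ (its arcsin singularities are continuous) and real-analytic off the geodesics in $H^-_{\Delta,r}(f)$. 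First I would check that $A$ is well defined: each $z$ lies in the bounded component of only finitely many $c_X$ and of none once $y$ exceeds the imaginary parts occurring on the finitely many geodesics with $c_f^-(n,h) \neq 0$, so $A$ is a locally finite sum, continuous on $\H$, real-analytic off the semicircular geodesics, and identically zero for $y \gg 0$. Together with Proposition \ref{proposition Fourier expansion} (which gives $\Phi_{\Delta,r}(f,z) = P(z)$ for $y \gg 0$) this shows $\Phi_{\Delta,r}(f,z) = \Psi(z)$ on the open set where $y$ is large.

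The main point is to show that $g := \Phi_{\Delta,r}(f,z) - \Psi(z)$ continues real-analytically across every geodesic, away from the points lying on two of them. The essential case is a semicircular geodesic $c_X$ with $a \neq 0$; there $P$ is real-analytic, so I must match the singularity of $\Phi_{\Delta,r}(f,z)$ from Theorem \ref{theorem Borcherds lift}, namely $c_f^-(n,h)n^{-1/2}\chi_\Delta(X)\arcsin(\sqrt{Q_\Delta(X)/Q_\Delta(X_{z^{\perp}})})$, against that of the corresponding summand of $A$. The geometric input is the identity $Q_\Delta(X_z) = \tfrac{1}{4|\Delta|}p_X(z)^2$ on $L_{-|\Delta|n,rh}$, which follows from the explicit formula for $-Q_\Delta(X_{z^{\perp}})$ in the remark after Theorem \ref{theorem Borcherds lift} together with $\mathrm{disc}(Q_X) = 4N|\Delta|n$. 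Writing $\beta = \sqrt{4|\Delta|n}$ and $\rho = -\sgn(a)p_X(z)$, it gives $-Q_\Delta(X_{z^{\perp}}) = n(1 + \rho^2/\beta^2)$, hence $\sqrt{Q_\Delta(X)/Q_\Delta(X_{z^{\perp}})} = \beta/\sqrt{\beta^2+\rho^2}$ and therefore $\arcsin(\sqrt{Q_\Delta(X)/Q_\Delta(X_{z^{\perp}})}) = \arctan(\beta/|\rho|)$. Now $\arctan(\beta/|\rho|) = \tfrac\pi2 - \arctan(|\rho|/\beta)$, while a short case distinction (using $\arctan(w) + \arctan(1/w) = -\tfrac\pi2$ for $w < 0$) shows $\mathbf{1}_X(z)(\arctan(\beta/\rho) + \tfrac\pi2) = \tfrac12\arctan(|\rho|/\beta) - \tfrac12\arctan(\rho/\beta)$, whose second term is real-analytic. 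Hence both the arcsin of $\Phi_{\Delta,r}(f,z)$ and the summand $-2\,\mathbf{1}_X(z)(\arctan(\beta/\rho)+\tfrac\pi2)$ of $A$ equal $-\arctan(|\rho|/\beta)$ modulo real-analytic functions, so after multiplying by the common prefactor $c_f^-(n,h)n^{-1/2}\chi_\Delta(X)$ their singularities cancel in $g$. Along a vertical geodesic ($a = 0$) the summand $A$ is real-analytic, and the $\mathcal{F}$-part of $P$ reproduces the arcsin singularity of $\Phi_{\Delta,r}(f,z)$ by construction (the remark after Proposition \ref{proposition Fourier expansion}); I would verify this by the analogous corner computation for $\mathcal{F}$, which near $\Re(w) \in \Z$ behaves like $\tfrac\pi2 - |\arctan(\tfrac{\Re(w)+\ell}{\imag(w)})|$.

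Finally I would assemble the local statements. Because the geodesics carrying a nonzero singularity are locally finite, their pairwise intersections form a discrete set $S$, and removing it leaves $\H \setminus S$ connected. On this connected open set $g$ is real-analytic (real-analytic off the geodesics, and across each of them by the singularity matching) and vanishes on the nonempty open subset where $y$ is large, so the identity theorem forces $g \equiv 0$ on $\H \setminus S$; continuity of $\Phi_{\Delta,r}(f,z)$ and of $\Psi$ then gives $g \equiv 0$ on $\H$. The step I expect to be most delicate is the singularity bookkeeping: verifying that all constants, the sign convention $-\sgn(a)$, the shift $+\tfrac\pi2$ and the indicator $\mathbf{1}_X$ combine to exactly the arcsin singularity (including the consistent behaviour of the pair $\pm X$, which share the geodesic $c_X$), and carrying out the parallel but more involved matching for the function $\mathcal{F}$ along vertical geodesics.
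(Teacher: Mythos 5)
Your proof is correct and follows essentially the same route as the paper. The heart of both arguments is the identity $-Q_{\Delta}(X_{z^{\perp}}) = n + p_{X}(z)^{2}/(4|\Delta|)$, which converts each summand of \eqref{eq Borcherds lift Fourier expansion additional part} into the $\arcsin$-singularity from Theorem \ref{theorem Borcherds lift} plus a real-analytic remainder; the paper writes that remainder as $\arccot\bigl(-\sgn(a)p_{X}(z)/\sqrt{4|\Delta|n}\bigr)$, which is real analytic at the origin, while you write the equivalent $\arctan$ of the reciprocal --- the bookkeeping with $\mathbf{1}_{X}$, $\sgn(a)$ and the shift $\pi/2$ checks out in both versions. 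The one place where your formulation is logically weaker is the final assembly: applying the identity theorem to $g = \Phi_{\Delta,r}(f,z) - P(z) - A(z)$ presupposes that $P$ (the expansion of Proposition \ref{proposition Fourier expansion}) already converges and defines a function on all of $\H$, which is not obvious for small $y$, since the coefficients $c_{f}^{+}(|\Delta|m^{2}/4N,rm)$ of a harmonic Maass form with nontrivial non-holomorphic principal part may grow exponentially in $m$. The paper sidesteps this by working with $\tilde{\Phi} = \Phi_{\Delta,r}(f,z) - A(z)$, showing that it extends real analytically (off the vertical geodesics), and then \emph{deducing} that its Fourier expansion is given by the formula of Proposition \ref{proposition Fourier expansion} and hence converges on all of $\H$; convergence is a conclusion there, not a hypothesis. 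Apart from this reordering, your explicit matching of the vertical-geodesic singularities against the $\mathcal{F}$-terms is a point the paper leaves implicit, and is a useful addition rather than a deviation.
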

		
		\begin{remark}
			\begin{enumerate}
				\item Recall that for $X =  \begin{pmatrix}-b/2N & -c/N \\ a & b/2N\end{pmatrix} \in L'$ we defined 
			\[
			p_{X}(z) = -\frac{aN|z|^{2} +bx + c}{y\sqrt{N}},
			\]
			which vanishes exactly along the geodesic $c_{X}$. Further, if $a \neq 0$ then a point $z$ lies inside the bounded component of $\H \setminus c_{X}$ if and only if $\sgn(a)p_{X}(z) > 0$. In particular, we see that if $z \in \H \setminus c_{X}$ approaches $c_{X}$, then the expression
			\[
			\mathbf{1}_{X}(z)\left(\arctan\left( \frac{\sqrt{4|\Delta|n}}{-\sgn(a)p_{X}(z)}\right) + \frac{\pi}{2}\right)
			\] 
			goes to $0$. In this sense, the above Fourier expansion is defined on all of $\H$.
				\item The sum in \eqref{eq Borcherds lift Fourier expansion additional part} is locally finite since for fixed $n$ each point $z$ lies in the bounded component of $\H\setminus c_{X}$ for only finitely many $X \in L_{-|\Delta| n,rh}$ with $a \neq 0$.
			\end{enumerate}
		\end{remark}
		
		\begin{proof}
			Let $\tilde{\Phi}_{\Delta,r}(f,z)$ denote $\Phi_{\Delta,r}(f,z)$ minus the expression in \eqref{eq Borcherds lift Fourier expansion additional part}. Then we have $\tilde{\Phi}_{\Delta,r}(f,z) = \Phi_{\Delta,r}(f,z)$ for $y \gg 0$ large enough since the imaginary parts of points lying on geodesics $c_{X}$ for $X \in L_{-|\Delta|n,rh}$ with $a \neq 0$ are bounded by a constant depending on $n$, and the sum over $n$ is finite.
			
			Further, for $a \neq 0$ and $z \notin c_{X}$ we can write
			\begin{align*}
			&-2\cdot \mathbf{1}_{X}(z)\left(\arctan\left( \frac{\sqrt{4|\Delta|n}}{-\sgn(a)p_{X}(z)}\right) + \frac{\pi}{2}\right) \\
			&= \arctan\left( \frac{\sqrt{4|\Delta|n}}{|p_{X}(z)|}\right) - \left( \arctan\left( \frac{\sqrt{4|\Delta|n}}{-\sgn(a)p_{X}(z)}\right) + \mathbf{1}_{X}(z) \pi\right) \\
			&= \arctan\left( \sqrt{\frac{Q_{\Delta}(X)}{-Q_{\Delta}(X_{z})}}\right) - \arccot\left( \frac{-\sgn(a)p_{X}(z)}{\sqrt{4|\Delta|n}} \right).
			\end{align*}
			Using that the function $\arccot$ is real analytic at the origin, and the shape of the singularities of $\Phi_{\Delta,r}(f,z)$ determined in Theorem \ref{theorem Borcherds lift}, we see that $\tilde{\Phi}_{\Delta,r}(f,z)$ extends to a real analytic on all of $\H$. In particular, the Fourier expansion of $\Phi_{\Delta,r}(f,z)$ given in Proposition \ref{proposition Fourier expansion}, which a priori only converges for $y \gg 0$ sufficiently large, is also the Fourier expansion of the real analytic function $\tilde{\Phi}_{\Delta,r}(f,z)$ on all of $\H$, and hence converges on all of $\H$. We obtain the stated Fourier expansion.
		\end{proof}

		\section{The derivative of the Borcherds lift}\label{section Borcherds lift derivative}
		
		We consider the derivative 
		\[
		\Phi'_{\Delta,r}(f,z) = \frac{\partial}{\partial z}\Phi_{\Delta,r}(f,z)
		\]
		of the Borcherds lift. 
		
		\begin{theorem}\label{theorem Borcherds lift derivative}
			Let $f \in H_{1/2,\tilde{\rho}^{*}_{L}}$. The derivative $\Phi'_{\Delta,r}(f,z)$ of the Borcherds lift is harmonic on $\H \setminus (H_{\Delta,r}^{+}(f) \cup H_{\Delta,r}^{-}(f))$ and transforms like a modular form of weight $2$ under $\Gamma_{0}(N)$. If $\Delta \neq 1$ or if $c_{f}^{+}(0,0) = 0$, then $\Phi'_{\Delta,r}(f,z)$ is holomorphic on its domain. 
			
			At a point $z_{0} \in H^{+}_{\Delta,r}(f) \cup H^{-}_{\Delta,r}(f)$ it has a singularity of type
		\begin{align*}
		&i\sqrt{N}\sum_{h \in L'/L}\sum_{n < 0}c_{f}^{+}(n,h)\sum_{\substack{X \in L_{-|\Delta|n,rh} \\  z_{0} = z_{X}}}\chi_{\Delta}(X)\frac{p_{X}(z)}{Q_{X}(z)}\\
		&+i\sqrt{N|\Delta|}\sum_{h \in L'/L}\sum_{n> 0}c_{f}^{-}(n,h)\sum_{\substack{X \in L_{-|\Delta|n,rh} \\ z_{0} \in c_{X}}}\chi_{\Delta}(X)\frac{\sgn(p_{X}(z))}{Q_{X}(z)}.
		\end{align*}
		\end{theorem}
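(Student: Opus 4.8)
The plan is to deduce every assertion from Theorem~\ref{theorem Borcherds lift} by applying the operator $\frac{\partial}{\partial z} = \frac{1}{2}\left(\frac{\partial}{\partial x}-i\frac{\partial}{\partial y}\right)$ to $\Phi_{\Delta,r}(f,z)$, which is legitimate since the lift is real analytic on $\H\setminus(H^{+}_{\Delta,r}(f)\cup H^{-}_{\Delta,r}(f))$. The weight $2$ modularity is immediate: differentiating $\Phi_{\Delta,r}(f,\gamma z) = \Phi_{\Delta,r}(f,z)$ for $\gamma = \left(\begin{smallmatrix}a & b\\ c& d\end{smallmatrix}\right)\in\Gamma_{0}(N)$ and using $\frac{\partial}{\partial z}(\gamma z) = (cz+d)^{-2}$ together with the holomorphy of $\gamma z$ in $z$ gives $\Phi'_{\Delta,r}(f,\gamma z) = (cz+d)^{2}\Phi'_{\Delta,r}(f,z)$.

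For harmonicity and holomorphy I would write the relevant Laplacians as $\Delta_{0} = -4y^{2}\frac{\partial}{\partial z}\frac{\partial}{\partial\bar z}$ and $\Delta_{2} = -4y^{2}\frac{\partial}{\partial z}\frac{\partial}{\partial\bar z} + 4iy\frac{\partial}{\partial\bar z}$. The Laplace equation in Theorem~\ref{theorem Borcherds lift} then translates into $\frac{\partial}{\partial\bar z}\Phi'_{\Delta,r}(f,z) = \frac{\partial}{\partial\bar z}\frac{\partial}{\partial z}\Phi_{\Delta,r}(f,z) = \frac{c}{2y^{2}}$, where $c = c_{f}^{+}(0,0)$ if $\Delta = 1$ and $c = 0$ otherwise. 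When $c = 0$ this says $\Phi'_{\Delta,r}(f,z)$ is annihilated by $\frac{\partial}{\partial\bar z}$, hence holomorphic. In general, using $\frac{\partial}{\partial z}y^{-2} = iy^{-3}$ one finds $\frac{\partial}{\partial z}\frac{\partial}{\partial\bar z}\Phi'_{\Delta,r}(f,z) = \frac{ic}{2y^{3}}$, so that $\Delta_{2}\Phi'_{\Delta,r}(f,z) = -\frac{2ic}{y}+\frac{2ic}{y} = 0$ and $\Phi'_{\Delta,r}(f,z)$ is harmonic of weight $2$.

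The singularities are obtained by differentiating the singularity type of $\Phi_{\Delta,r}(f,z)$: if $\Phi_{\Delta,r}(f,z)-g$ extends real analytically near $z_{0}$, then so does $\Phi'_{\Delta,r}(f,z) - \frac{\partial}{\partial z}g$, so it suffices to apply $\frac{\partial}{\partial z}$ to the two singular sums. For the logarithmic sum I would use $-Q_{\Delta}(X_{z^{\perp}}) = \frac{1}{4N|\Delta|y^{2}}|Q_{X}(z)|^{2}$ from the remark and split $\log|Q_{X}(z)|^{2} = \log Q_{X}(z) + \log\overline{Q_{X}(z)}$; only the holomorphic logarithm and the $\log(y^{2})$ term survive under $\frac{\partial}{\partial z}$, and a direct computation gives
\[
-\frac{\partial}{\partial z}\log\big(-Q_{\Delta}(X_{z^{\perp}})\big) = -\frac{2aNz+b}{Q_{X}(z)}-\frac{i}{y} = i\sqrt{N}\,\frac{p_{X}(z)}{Q_{X}(z)},
\]
which yields the first line of the claimed singularity after inserting the coefficients $c_{f}^{+}(n,h)\chi_{\Delta}(X)$.

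The delicate part, and the main obstacle, is the arcsin sum. Writing $t = -Q_{\Delta}(X_{z^{\perp}})$ and $n = -Q_{\Delta}(X)>0$, the argument of the arcsin equals $\sqrt{n/t}$, and I would first record two auxiliary identities: from the elementary relation $|Q_{X}(z)|^{2} = (aN|z|^{2}+bx+c)^{2} + (b^{2}-4aNc)y^{2}$ together with $b^{2}-4aNc = 4N|\Delta|n$ one gets $t-n = Q_{\Delta}(X_{z}) = \frac{1}{4|\Delta|}p_{X}(z)^{2}$, while a direct calculation gives $\frac{\partial}{\partial z}p_{X}(z) = -\frac{i\,\overline{Q_{X}(z)}}{2\sqrt{N}\,y^{2}}$. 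Then $\frac{\partial}{\partial z}\arcsin(\sqrt{n/t}) = -\frac{n^{1/2}}{2t\sqrt{t-n}}\frac{\partial}{\partial z}t$, and substituting $\sqrt{t-n} = \frac{|p_{X}(z)|}{2\sqrt{|\Delta|}}$ and $\frac{\partial}{\partial z}t = \frac{p_{X}(z)}{2|\Delta|}\frac{\partial}{\partial z}p_{X}(z)$ produces the factor $p_{X}(z)/|p_{X}(z)| = \sgn(p_{X}(z))$, the jump across $c_{X}$, while the remaining smooth factor collapses, using $t = \frac{|Q_{X}(z)|^{2}}{4N|\Delta|y^{2}}$, exactly to $n^{1/2}\,i\sqrt{N|\Delta|}/Q_{X}(z)$. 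Hence $n^{-1/2}\frac{\partial}{\partial z}\arcsin(\sqrt{n/t}) = i\sqrt{N|\Delta|}\,\frac{\sgn(p_{X}(z))}{Q_{X}(z)}$ on $\H\setminus c_{X}$, and summing over $X$ with the coefficients $c_{f}^{-}(n,h)\chi_{\Delta}(X)$ gives the second line. The point to be careful about is that this singular behaviour is genuinely a \emph{jump} carried by $\sgn(p_{X}(z))$ rather than a blow-up, which is exactly what is consistent with $\Phi'_{\Delta,r}(f,z)$ being holomorphic away from the geodesics.
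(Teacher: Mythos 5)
Your proposal is correct and follows exactly the route the paper takes: the paper's proof consists of the two remarks that the analytic properties follow from the Laplace equation of Theorem~\ref{theorem Borcherds lift} together with $\Delta_{0}=-4y^{2}\frac{\partial}{\partial\bar z}\frac{\partial}{\partial z}$, and that the singularity types are obtained by differentiating those of $\Phi_{\Delta,r}(f,z)$. You have simply carried out these differentiations explicitly (and your identities $-\frac{\partial}{\partial z}\log(-Q_{\Delta}(X_{z^{\perp}}))=i\sqrt{N}\,p_{X}(z)/Q_{X}(z)$ and $n^{-1/2}\frac{\partial}{\partial z}\arcsin(\sqrt{n/t})=i\sqrt{N|\Delta|}\,\sgn(p_{X}(z))/Q_{X}(z)$ check out), so the argument matches the paper's proof in substance.
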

		
		\begin{proof}
			The analytic properties of $\Phi'_{\Delta,r}(f,z)$ follow from the Laplace equation in Theorem \ref{theorem Borcherds lift} and the formula $\Delta_{0} = -4y^{2}\frac{\partial}{\partial \bar{z}}\frac{\partial}{\partial z}$. The types of singularities of $\Phi'_{\Delta,r}(f,z)$ are obtained as the derivatives of the types of singularities of $\Phi_{\Delta,r}(f,z)$.
		\end{proof}
		
		\begin{remark} Let $X = \begin{pmatrix}-b/2N & -c/N \\ a & b/2N \end{pmatrix} \in L_{-|\Delta|n,rh}$. For $n < 0$ we have 
		\[
		Q_{X}(z) = aNz^{2} + bz + c = 0
		\]
		exactly for the Heegner point $z = z_{X}$. Hence $\Phi'_{\Delta,r}(f,z)$ has simple poles at the Heegner points in $H_{\Delta,r}^{+}(f)$. For $n > 0$ the sign of 
		\[
		p_{X}(z) = -\frac{aN|z|^{2} + bx + c}{y\sqrt{N}}
		\]
		 changes if $z$ crosses the geodesic $c_{X}$. This means that $\Phi'_{\Delta,r}(f,z)$ has jump singularities along the geodesics in $H_{\Delta,r}^{-}(f)$.
		\end{remark}

		\begin{proposition}\label{proposition Fourier expansion derivative}
			Let $f \in H_{1/2,\tilde{\rho}^{*}_{L}}$. For $y \gg 0$ sufficiently large we have the Fourier expansion
			\begin{align*}
			\Phi'_{\Delta,r}(f,z) &= 4 \pi i \sqrt{|\Delta|}\bar{\varepsilon} \sum_{n = 1}^{\infty}\left(\sum_{d \mid n}\left( \frac{\Delta}{n/d}\right)d \, c_{f}^{+}\left(|\Delta|d^{2}/4N,rd\right)\right)e(nz) \\
			&\quad + 2\sum_{m=1}^{\infty}c_{f}^{-}(|\Delta|m^{2}/4N,rm)\left(\frac{|\Delta|}{4N} \right)^{-1/2}\sum_{b(\Delta)}\left( \frac{\Delta}{b}\right)\mathcal{F}'(mz+b/\Delta) \\
			&\quad+ \begin{cases} 
				\begin{aligned}
					&-\frac{i\sqrt{N}}{2}(f,\theta_{1/2})^{\reg} + \frac{i}{y}c_{f}^{+}(0,0) \\
					&\quad + \frac{i}{2}\sqrt{N}\, c_{f}^{-}(0,0)\left(\log(4\pi)-\log(Ny^{2})-2+\Gamma'(1)\right)
				\end{aligned}
				& \text{if }\Delta = 1, \\
			-i \sqrt{N\Delta}\, L_{\Delta}(1) c_{f}^{-}(0,0)  & \text{if }\Delta > 1, \\
			0 & \text{if } \Delta < 0.
			\end{cases}
			\end{align*}
			where $\varepsilon = 1$ if $\Delta > 0$ if $\varepsilon = i$ for $\Delta < 0$, and
			\[
			\mathcal{F}'(z) = -\frac{i}{2}\lim_{s \to 0}\left( y^{2s}\Gamma(s+1)\sum_{\ell \in \Z}\frac{\sgn(x+\ell)(\bar{z}+\ell)}{|z+\ell|^{2s+2}} - \Gamma(s)\right).
			\]
		\end{proposition}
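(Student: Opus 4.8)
The plan is to obtain the Fourier expansion of $\Phi'_{\Delta,r}(f,z)$ by applying the Wirtinger operator $\frac{\partial}{\partial z} = \frac{1}{2}\big(\frac{\partial}{\partial x} - i\frac{\partial}{\partial y}\big)$ termwise to the Fourier expansion of $\Phi_{\Delta,r}(f,z)$ from Proposition \ref{proposition Fourier expansion}, which holds for $y \gg 0$. Term-by-term differentiation is legitimate because the estimates in the proof of Theorem \ref{theorem Borcherds lift} show that all iterated partial derivatives of the lift converge absolutely and locally uniformly, so $\Phi_{\Delta,r}(f,z)$ is smooth and its $z$-derivative is computed by differentiating the series. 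I would treat the three parts of the expansion separately and set $\mathcal{F}' := \frac{\partial}{\partial z}\mathcal{F}$ by definition.

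For the holomorphic logarithmic terms in the first line I use $\frac{\partial}{\partial z}\log|g(z)| = \frac{1}{2}g'(z)/g(z)$ for holomorphic $g$, which gives $\frac{\partial}{\partial z}\log|1-e(mz+b/\Delta)| = -\pi i m\, e(mz+b/\Delta)/(1-e(mz+b/\Delta))$. Expanding the geometric series $w/(1-w)=\sum_{k\geq 1}w^{k}$ and applying the Gauss sum \eqref{eq Gauss sum} in the form $\sum_{b(\Delta)}\left(\frac{\Delta}{b}\right)e(kb/\Delta) = \bar\varepsilon\left(\frac{\Delta}{k}\right)\sqrt{|\Delta|}$, and then reindexing by $n=km$, $d=m$, yields exactly the holomorphic Fourier series in the first line of the claim. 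The complex conjugate $\bar\varepsilon$ arises precisely because for $\Delta<0$ one has $e(kb/\Delta)=\overline{e(kb/|\Delta|)}$, so the Gauss sum gets conjugated. The constant and growth terms in the third part are handled by the elementary identities $\frac{\partial}{\partial z}y = -\frac{i}{2}$ and $\frac{\partial}{\partial z}\log y = -\frac{i}{2y}$; a short computation reproduces each of the cases $\Delta=1$, $\Delta>1$, $\Delta<0$, with the extra constant $-2$ in the $\Delta=1$, $c_f^-(0,0)$ term coming from the contribution $y\,\frac{\partial}{\partial z}\log(Ny^2) = 2\,\frac{\partial}{\partial z}y = -i$.

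The substantial computation is the explicit evaluation of $\mathcal{F}'$. Using the incomplete beta representation \eqref{eq arcsin by incomplete beta} together with $\frac{d}{dw}B(w;s+\tfrac12,\tfrac12)=w^{s-1/2}(1-w)^{-1/2}$, I differentiate each summand $\arcsin_s\big(y/\sqrt{(x+\ell)^2+y^2}\big)$ with respect to $z$, where the argument corresponds to $w = y^2/|z+\ell|^2$. Inserting $\frac{\partial}{\partial z}w = \big(-iy|z+\ell|^2 - y^2(\bar z+\ell)\big)/|z+\ell|^4$ and simplifying, the key algebraic cancellation $i|z+\ell|^2 + y(\bar z+\ell) = i(x+\ell)(\bar z+\ell)$ collapses each term to $-\frac{i}{2}\frac{\sqrt\pi\,\Gamma(s+1)}{\Gamma(s+1/2)}y^{2s}\sgn(x+\ell)(\bar z+\ell)|z+\ell|^{-2s-2}$. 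Differentiating the regularizing term $-y\sqrt\pi\,\Gamma(s)/\Gamma(s+1/2)$ produces $\frac{i}{2}\sqrt\pi\,\Gamma(s)/\Gamma(s+1/2)$; factoring out $-\frac{i}{2}\sqrt\pi/\Gamma(s+1/2)$ and using $\sqrt\pi/\Gamma(s+1/2)\to 1$ as $s\to 0$ yields the stated closed form for $\mathcal{F}'$. Finally, the chain-rule factor $m$ from $\frac{\partial}{\partial z}\mathcal{F}(mz+b/\Delta)=m\,\mathcal{F}'(mz+b/\Delta)$ cancels the $m^{-1}$ hidden inside $(|\Delta|m^2/4N)^{-1/2}$, producing the second line of the claim.

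The main obstacle I anticipate is the rigorous treatment of the regularization in $\mathcal{F}'$: one must justify interchanging $\frac{\partial}{\partial z}$ with both the limit $s\to 0$ and the sum over $\ell$, which is only conditionally convergent at $s=0$, and verify that replacing $\sqrt\pi/\Gamma(s+1/2)$ by its limiting value $1$ does not alter the regularized quantity. Both points are controlled by the locally uniform convergence in $s$ for $\Re(s)>-1$ established in Lemma \ref{lemma Fourier expansion arcsin series}, which legitimizes differentiation under the limit and guarantees that the $O(s)$ discrepancy between $\sqrt\pi/\Gamma(s+1/2)$ and $1$ vanishes against the finite regularized bracket as $s\to 0$.
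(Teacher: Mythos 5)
Your proposal is correct and follows essentially the same route as the paper: termwise differentiation of the expansion from Proposition \ref{proposition Fourier expansion}, evaluation of the logarithmic terms via the geometric series and the Gauss sum \eqref{eq Gauss sum} (with the conjugation $\bar\varepsilon$ arising for $\Delta<0$ exactly as you explain), and computation of $\mathcal{F}'$ through the incomplete beta representation \eqref{eq arcsin by incomplete beta}. Your verification that the factor $\sqrt{\pi}/\Gamma(s+1/2)$ may be replaced by $1$ because it multiplies the already-regularized (finite-limit) bracket is a point the paper leaves implicit, and your computation is sound.
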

		
		\begin{proof}
			The derivative of $\mathcal{F}(z)$ can be computed most easily using the representation \eqref{eq arcsin by incomplete beta} of $\arcsin_{s}$ as an incomplete beta function. Using the formula \eqref{eq Gauss sum} for the Gauss sum, the calculation of the remaining derivatives is straightforward.
		\end{proof}

		Again, we consider the special case that $c_{f}^{+}(n,h) = 0$ for all $n < 0$.
		
		\begin{corollary}\label{corollary derivative additional part}
			Let $f \in H_{1/2,\tilde{\rho}^{*}_{L}}$, and suppose that $c_{f}^{+}(n,h) = 0$ for all $n <0$ and $h \in L'/L$. Then the Fourier expansion of the derivative $\Phi_{\Delta,r}'(f,z)$ of the Borcherds lift on $\H \setminus H_{\Delta,r}^{-}(f)$ is given by the formula from Proposition \ref{proposition Fourier expansion derivative} plus the expression
			\[
			-2i\sqrt{|\Delta|N}\sum_{h \in L'/L}\sum_{n > 0}c_{f}^{-}(n,h)\sum_{\substack{X \in L_{-|\Delta|n,rh} \\ a \neq 0}}\chi_{\Delta}(X)\frac{\mathbf{1}_{X}(z)\sgn(a)}{Q_{X}(z)},
			\]
			where $\mathbf{1}_{X}(z)$ denotes the characteristic function of the bounded component of $\H \setminus c_{X}$.
		\end{corollary}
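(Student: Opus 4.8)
The plan is to obtain the Fourier expansion of $\Phi_{\Delta,r}'(f,z)$ by differentiating term by term, with respect to $z$, the Fourier expansion of $\Phi_{\Delta,r}(f,z)$ on $\H$ furnished by Corollary~\ref{corollary Borcherds lift Fourier expansion additional part}. This is legitimate on $\H\setminus H_{\Delta,r}^{-}(f)$: the coefficients $c_{f}^{-}(n,h)$ vanish for all but finitely many $n>0$, the $h$-sum is finite, and for each fixed $n$ the sum over $X\in L_{-|\Delta|n,rh}$ with $a\neq 0$ is locally finite, so both $\Phi_{\Delta,r}(f,z)$ and its expansion are smooth there and $\frac{\partial}{\partial z}$ commutes with the sums. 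Differentiating the part coming from Proposition~\ref{proposition Fourier expansion} reproduces, by the computation underlying Proposition~\ref{proposition Fourier expansion derivative}, precisely the Fourier expansion stated there (the holomorphic $e(nz)$-series, the terms $\mathcal{F}'(mz+b/\Delta)$, and the constant contribution). It therefore remains only to differentiate the additional expression \eqref{eq Borcherds lift Fourier expansion additional part}.

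For this I would fix $X=\left(\begin{smallmatrix}-b/2N & -c/N\\ a & b/2N\end{smallmatrix}\right)\in L_{-|\Delta|n,rh}$ with $a\neq 0$ and a point $z\notin c_{X}$, where $\mathbf{1}_{X}$ is locally constant, so that only the $\arctan$ in \eqref{eq Borcherds lift Fourier expansion additional part} needs to be differentiated. The two key inputs are the identities
\begin{align*}
\frac{\partial}{\partial z}p_{X}(z) = -\frac{i\,\overline{Q_{X}(z)}}{2\sqrt{N}\,y^{2}}, \qquad p_{X}(z)^{2}+4|\Delta|n = \frac{|Q_{X}(z)|^{2}}{N y^{2}}.
\end{align*}
The first is a direct computation from $p_{X}(z)=-(aN|z|^{2}+bx+c)/(y\sqrt{N})$. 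The second follows from the relations $Q(X_{z})=\tfrac14 p_{X}(z)^{2}$ and $-Q_{\Delta}(X_{z^{\perp}})=|Q_{X}(z)|^{2}/(4N|\Delta|y^{2})$ together with $Q_{\Delta}(X)=-n$, via $Q_{\Delta}(X)=Q_{\Delta}(X_{z})+Q_{\Delta}(X_{z^{\perp}})$. Feeding these into the chain rule for $\frac{\partial}{\partial z}\arctan\big(\sqrt{4|\Delta|n}\,/(-\sgn(a)p_{X}(z))\big)$, the denominator $|Q_{X}(z)|^{2}=Q_{X}(z)\overline{Q_{X}(z)}$ cancels the $\overline{Q_{X}(z)}$ produced by $\frac{\partial}{\partial z}p_{X}$, leaving a single term proportional to $\sgn(a)/Q_{X}(z)$; the square roots collapse via $n^{-1/2}\sqrt{|\Delta|nN}=\sqrt{|\Delta|N}$, so that the prefactor $-2c_{f}^{-}(n,h)n^{-1/2}\chi_{\Delta}(X)\mathbf{1}_{X}(z)$ produces exactly the additional summand in the statement.

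The main point that needs care is the bookkeeping of the overall constant and sign, which I would pin down by an independent consistency check against Theorem~\ref{theorem Borcherds lift derivative}: since the Proposition~\ref{proposition Fourier expansion derivative} part is real analytic across each semicircle $c_{X}$ (it is the analytic continuation of the expansion valid for $y\gg0$), the entire jump of $\Phi_{\Delta,r}'(f,z)$ across $c_{X}$ must come from the additional term, and equating this jump with that of the singularity $i\sqrt{N|\Delta|}\,\chi_{\Delta}(X)\sgn(p_{X}(z))/Q_{X}(z)$ predicted by Theorem~\ref{theorem Borcherds lift derivative} (note that $\mathbf{1}_{X}$ jumps by $1$ exactly where $\sgn(p_{X})$ jumps by $2\sgn(a)$) fixes the normalization of the additional term. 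A secondary subtlety is that $\frac{\partial}{\partial z}$ of the Proposition~\ref{proposition Fourier expansion} part agrees with the Proposition~\ref{proposition Fourier expansion derivative} expansion only away from the vertical geodesics, where the $\mathcal{F}$-terms fail to be real analytic; but these are contained in $H_{\Delta,r}^{-}(f)$, so this is already accounted for by restricting to $\H\setminus H_{\Delta,r}^{-}(f)$.
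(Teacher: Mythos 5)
Your approach is exactly the one the paper takes: its entire proof of this corollary is the remark that one can either argue as for Corollary \ref{corollary Borcherds lift Fourier expansion additional part} or differentiate the expression \eqref{eq Borcherds lift Fourier expansion additional part} directly, and your two identities $\partial_z p_X(z)=-i\overline{Q_X(z)}/(2\sqrt{N}y^2)$ and $p_X(z)^2+4|\Delta|n=|Q_X(z)|^2/(Ny^2)$ are correct and make that second route work. One caveat on the sign bookkeeping that you explicitly deferred: actually carrying out either your chain-rule computation or your proposed consistency check against Theorem \ref{theorem Borcherds lift derivative} (where $\sgn(p_X)$ jumps by $2\sgn(a)$ while $\mathbf{1}_X$ jumps by $1$ across $c_X$) yields the additional term with prefactor $+2i\sqrt{|\Delta|N}$ rather than the $-2i\sqrt{|\Delta|N}$ printed in the corollary --- the $+$ sign is also what Proposition \ref{proposition Borcherds lift derivative introduction} displays in the special case of $h$ --- so the discrepancy lies in the printed statement rather than in your method, but you should not claim the computation ``produces exactly the additional summand in the statement'' without doing this final check, since that is precisely where it bites.
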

		
		\begin{proof}
			This can either be proved by similar arguments as in the proof of Corollary \ref{corollary Borcherds lift Fourier expansion additional part}, or by computing the derivative of the expression \eqref{eq Borcherds lift Fourier expansion additional part}.
		\end{proof}
		
		\section{Applications: Modular integrals with rational period functions and Borcherds products of harmonic Maass forms}
		
		For simplicity, we assume in this section that $N$ is square free. Then the cusps of $\Gamma_{0}(N)$ can be represented by the fractions $1/c$ with $c \mid N$. Note that $\infty$ corresponds to $1/N$. The width of $1/c$ is given by $\alpha_{1/c} = N/c$. We choose the matrix $\sigma_{1/c} \in \SL_{2}(\Z)$ sending $\infty$ to $1/c$ in the form
		\[
		\sigma_{1/c} = \begin{pmatrix}1 & \beta \\ c & N\gamma/c\end{pmatrix}
		\]
		where $\beta,\gamma \in \Z$ are such that $N\gamma/c - c \beta = 1$. Then we can take the Atkin-Lehner involution corresponding to $N/c$ as
		\[
		W_{N/c} = \sigma_{1/c}\begin{pmatrix}N/c & 0 \\ 0 & 1 \end{pmatrix}.
		\]
		We see that $W_{N/c}\infty = 1/c$, so the Atkin-Lehner involutions act transitively on the cusps. Further, the expansion at the cusp $1/c$ of a function $F$, which is modular of weight $k \in \Z$, is given by
		\[
		(F|_{k}\sigma_{1/c})(z) = (c/N)^{k/2} \cdot (F|_{k}W_{N/c})(cz/N).
		\]
		Since 
		\[
		\Phi_{\Delta,r}(f,z)|_{0}W_{N/c} = \Phi_{\Delta,r}(f^{w_{N/c}},z)
		\]
		and consequently
		\[
		\Phi'_{\Delta,r}(f,z)|_{2}W_{N/c} = \Phi'_{\Delta,r}(f^{w_{N/c}},z),
		\]
		the expansion of $\Phi'_{\Delta,r}(f,z)$ at the cusp $1/c$ is essentially given by $\Phi'_{\Delta,r}(f^{w_{N/c}},z)$.
			
		\subsection{Modular integrals with rational period functions}	\label{section modular integrals}
				
		As an application of our extension of the Borcherds lift, we construct modular integrals of weight $2$ for $\Gamma_{0}(N)$ with rational period functions from harmonic Maass forms of weight $1/2$. Following Knopp \cite{knoppeichler}, we call a holomorphic function $F : \H \to \C$ a modular integral of weight $k \in \Z$ for $\Gamma_{0}(N)$ with rational period functions if
		\[
		q_{M}(z) = F(z) - (F|_{k}M)(z)
		\]
		is a rational function of $z$ for each $M \in \Gamma_{0}(N)$, and if $F$ is holomorphic at the cusps of $\Gamma_{0}(N)$, in the sense that $\lim_{y \to \infty}(F|_{k}M)(z)$ exists for every $M \in \SL_{2}(\Z)$. Then the map $M \mapsto q_{M}$ defines a weight $k$ cocycle for $\Gamma_{0}(N)$ with values in the rational functions which are holomorphic on $\H$, i.e., it satisfies
		\[
		q_{MM'} = q_{M}|_{k}M' + q_{M'}
		\]
		for all $M,M' \in \Gamma_{0}(N)$. Conversely, it follows from a more general result of Knopp \cite{knoppeichler} that every such cocycle admits a holomorphic modular integral. Knopp's modular integrals are Poincar\'e series built from the cocycles. It was shown in \cite{ditrational} and \cite{dit} that certain generating series of (traces of) cycle integrals of weakly holomorphic modular functions for $\SL_{2}(\Z)$ are modular integrals of weight $2$ with rational period functions. Using the Borcherds lift we generalize their construction to higher level.

		\begin{proposition}\label{proposition weight 2 modular integral}
			Let $\Delta \neq 1$ be a fundamental discriminant. Let $f \in H_{1/2,\tilde{\rho}^{*}_{L}}$ with $c_{f}^{+}(n,h) = 0$ for all $n <0$ and $h \in L'/L$. Further, assume that $c_{f}^{-}(|\Delta| m^{2}/4N,rm) = 0$ for all $m \in \Z, m > 0$. Then the function
			\[
			F_{\Delta,r}(f,z) = -\frac{1}{4\pi}L_{\Delta}(1)c_{f}^{-}(0,0) + \frac{\bar{\varepsilon}}{\sqrt{N}}\sum_{n = 1}^{\infty}\left(\sum_{d \mid n}\left( \frac{\Delta}{n/d}\right)d \, c_{f}^{+}\left(|\Delta| d^{2}/4N,rd\right)\right)e(nz)
			\]
			is holomorphic on $\H$ and at the cusps of $\Gamma_{0}(N)$, and satisfies the transformation rule
			\[
			 F_{\Delta,r}(f,z)|_{2}M-F_{\Delta,r}(f,z)= -\frac{1}{\pi}\sum_{h \in L'/L}\sum_{n > 0}c_{f}^{-}(n,h)\sum_{\substack{X \in L_{-|\Delta| n,rh} \\ a_{MX} <0 < a_{X}}}\frac{\chi_{\Delta}(X)}{Q_{X}(z)}
			\]
			for all $M \in \Gamma_{0}(N)$, where $a_{X}$ denotes the $a$ entry of $X$. In particular, $F_{\Delta,r}(f,z)$ is a modular integral of weight $2$ for $\Gamma_{0}(N)$.
		\end{proposition}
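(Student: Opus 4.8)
The plan is to express $F_{\Delta,r}(f,z)$ in terms of the derivative of the Borcherds lift and to exploit the exact modular invariance of the latter. Under the two hypotheses $c_f^+(n,h)=0$ for $n<0$ and $c_f^-(|\Delta|m^2/4N,rm)=0$ for $m>0$, the non-holomorphic $\mathcal{F}'$-contribution in Proposition \ref{proposition Fourier expansion derivative} vanishes, and comparing coefficients there (equivalently, differentiating the expansion of Proposition \ref{proposition Fourier expansion} term by term) gives
\[
4\pi i\sqrt{|\Delta|N}\,F_{\Delta,r}(f,z) = \frac{\partial}{\partial z}\tilde{\Phi}_{\Delta,r}(f,z),
\]
where $\tilde{\Phi}_{\Delta,r}(f,z)$ denotes the function from Corollary \ref{corollary Borcherds lift Fourier expansion additional part}, whose Fourier expansion on all of $\H$ consists only of $\log|1-e(\,\cdot\,)|$-terms and a term linear in $y$. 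Since the $z$-derivative of each $\log|1-e(mz+b/\Delta)|$ and of $y$ is holomorphic on $\H$, I conclude that $F_{\Delta,r}(f,z)$ is holomorphic on all of $\H$. Equivalently, writing $G=\frac{1}{4\pi i\sqrt{|\Delta|N}}\Phi'_{\Delta,r}(f,z)$ for the genuinely weight-$2$ invariant (but geodesic-singular) function and $S$ for the explicit sum of Corollary \ref{corollary derivative additional part}, one has $F_{\Delta,r}(f,z)=G+S$ on $\H\setminus H^-_{\Delta,r}(f)$, and the jumps of $S$ across the geodesics cancel those of $G$ recorded in Theorem \ref{theorem Borcherds lift derivative}.

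Next I would record holomorphy at the cusps. At $\infty$ this is immediate, since the Fourier expansion of $F_{\Delta,r}(f,z)$ has constant term $-\frac{1}{4\pi}L_\Delta(1)c_f^-(0,0)$ and is otherwise a power series in $e(z)$, so it is bounded as $y\to\infty$. For a cusp $1/c$ with $c\mid N$ I would use the Atkin-Lehner relation $\Phi'_{\Delta,r}(f,z)|_2 W_{N/c}=\Phi'_{\Delta,r}(f^{w_{N/c}},z)$ together with $(F|_2\sigma_{1/c})(z)=(c/N)(F|_2 W_{N/c})(cz/N)$, reducing the expansion at $1/c$ to the expansion at $\infty$ of the analogous object attached to $f^{w_{N/c}}\in H_{1/2,\tilde\rho_L^*}$; boundedness at $\infty$ then yields holomorphy of $F_{\Delta,r}(f,z)$ at every cusp.

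For the transformation behaviour, the key point is that $G$ is $\Gamma_0(N)$-invariant of weight $2$ by Theorem \ref{theorem Borcherds lift derivative}, so on the dense open set where everything is defined (and hence, by analytic continuation, as an identity of rational functions)
\[
F_{\Delta,r}(f,z)|_2 M - F_{\Delta,r}(f,z) = (S|_2 M - S)(z), \qquad M\in\Gamma_0(N).
\]
I would compute $S|_2M-S$ by reindexing the conjugation action $X\mapsto M^{-1}.X$ of $M$ on each $L_{-|\Delta|n,rh}$. Using the weight-$2$ identity $\frac{1}{Q_X}\big|_2 M=\frac{1}{Q_{M^{-1}.X}}$, the invariance $\chi_\Delta(M.X)=\chi_\Delta(X)$, the isometry-invariance $p_{M.X}(Mz)=p_X(z)$ (so the side of each geodesic is preserved), and the identity $\mathbf{1}_X(z)\sgn(a_X)=\tfrac12\big(\sgn(a_X)+\sgn(p_X(z))\big)$, the $\sgn(p_X)$-parts cancel in the difference and one is left with the finite sum
\[
(S|_2 M - S)(z)=\frac{1}{2\pi}\sum_{h,\,n>0}c_f^-(n,h)\sum_{X\in L_{-|\Delta|n,rh}}\chi_\Delta(X)\,\frac{\tfrac12\big(\sgn(a_{M.X})-\sgn(a_X)\big)}{Q_X(z)},
\]
supported on the forms whose $a$-entry changes sign under $M$. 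The hard part is the final bookkeeping: splitting into the two sign patterns $a_{M.X}<0<a_X$ and $a_X<0<a_{M.X}$ and applying the involution $X\mapsto-X$, under which $Q_{-X}=-Q_X$, $c_{-X}=c_X$, and $\chi_\Delta(-X)=\chi_\Delta(X)$ for $\Delta>0$, identifies the second sum with the first with the same sign, doubling it and producing the stated factor $-\frac1\pi$ together with the single condition $a_{MX}<0<a_X$. I expect the genuine obstacles to be (i) pinning down the sign conventions in $p_{M.X}(Mz)=p_X(z)$ and in the behaviour of $\chi_\Delta$ under $X\mapsto-X$ (which is where the cases $\Delta>1$ and $\Delta<0$ diverge, the period function possibly degenerating in the latter), and (ii) controlling the boundary contributions where $a_{M.X}=0$, i.e.\ the vertical geodesics produced by conjugation, showing that they do not enter the strict inequalities of the final formula, along with the finiteness of the resulting sum, which together then identify $q_M$ as a rational function holomorphic on $\H$ and establish that $F_{\Delta,r}(f,z)$ is a modular integral of weight $2$.
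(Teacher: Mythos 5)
Your proposal follows essentially the same route as the paper: write $\Phi'_{\Delta,r}(f,z)=4\pi i\sqrt{N|\Delta|}\,(F_{\Delta,r}(f,z)+F^{*}_{\Delta,r}(f,z))$ via Corollary \ref{corollary derivative additional part}, use the weight $2$ invariance of $\Phi'_{\Delta,r}$ to reduce the period function to $F^{*}_{\Delta,r}|_{2}M-F^{*}_{\Delta,r}$, compute it by reindexing with $Q_{X}|_{-2}M=Q_{M^{-1}X}$ and the $X\mapsto -X$ doubling, and handle the cusps by Atkin--Lehner; your identity $\mathbf{1}_{X}(z)\sgn(a_{X})=\tfrac12(\sgn(a_{X})+\sgn(p_{X}(z)))$ is just a repackaging of the paper's case analysis of $\mathbf{1}_{MX}(Mz)$ versus $\mathbf{1}_{X}(z)$. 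The two "obstacles" you flag are genuine but are resolved exactly by the hypothesis $c_{f}^{-}(|\Delta|m^{2}/4N,rm)=0$ (which forces $4N|\Delta|n$ to be a non-square whenever $c_{f}^{-}(n,h)\neq 0$, so $a_{MX}\neq 0$ never occurs) and by the simultaneous flip $(X,h)\mapsto(-X,-h)$, under which $\chi_{\Delta}(-X)=\sgn(\Delta)\chi_{\Delta}(X)$ is compensated by the symmetry of $c_{f}^{-}(n,\pm h)$ dictated by $\tilde{\rho}_{L}^{*}$.
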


		\begin{remark}
			\begin{enumerate}
				\item The requirement $c_{f}^{-}(|\Delta| m^{2}/4N,rm) = 0$ for all $m \in \Z, m > 0,$ ensures that $\Phi'_{\Delta,r}(f,z)$ does not have singularities along vertical geodesics, and implies that the second line of the Fourier expansion in Proposition \ref{proposition Fourier expansion derivative} vanishes.
				\item The proof of the transformation behaviour works for arbitrary positive integers $N$, but the assumption that $N$ is square free is used to obtain the Fourier expansions of $\Phi'_{\Delta,r}(f,z)$ at different cusps via Atkin-Lehner operators. One could compute the expansion at a cusp $\ell$ by choosing an appropriate sublattice $K_{\ell}$ instead of $K$ in Proposition \ref{proposition Fourier expansion} and modify the computation of the expansion at $\infty$ correspondingly. However, the above result is certainly true without the assumption that $N$ is square free, but the computations become much more technical.
			\end{enumerate}
		\end{remark}
		
		\begin{proof}
		[Proof of Proposition \ref{proposition weight 2 modular integral}]
			Let $z \in \H \setminus H_{\Delta,r}^{-}(h)$, and let
			\[
			F^{*}_{\Delta,r}(f,z) =  -\frac{1}{2\pi}\sum_{h \in L'/L}\sum_{n > 0}c_{f}^{-}(n,h)\sum_{\substack{X \in L_{-|\Delta| n,rh} \\ a \neq 0}}\chi_{\Delta}(X)\frac{\mathbf{1}_{X}(z)\sgn(a)}{Q_{X}(z)}.
			\]
			By Corollary \ref{corollary derivative additional part} we have 
			\[
			\Phi'_{\Delta,r}(f,z) = 4\pi i \sqrt{N|\Delta|}(F_{\Delta,r}(f,z) + F^{*}_{\Delta,r}(f,z)).
			\]
			Since $\Phi'_{\Delta,r}(f,z)$ transforms like a modular form of weight $2$ for $\Gamma_{0}(N)$, we obtain
			\begin{align*}
			F_{\Delta,r}(f,z)|_{2}M - F_{\Delta,r}(f,z)| = -F^{*}_{\Delta,r}(f,z)|_{2}M + F^{*}_{\Delta,r}(f,z).
			\end{align*}
			Using $Q_{X}(z)|_{-2}M = Q_{M^{-1}X}(z)$, we obtain that the right-hand side of the last formula equals
			\begin{align*}
			 &-\frac{1}{2\pi}\sum_{h \in L'/L}\sum_{n > 0}c_{f}^{-}(n,h)\sum_{\substack{X \in L_{-|\Delta| n,rh} \\ a \neq 0}}\chi_{\Delta}(X)\frac{\mathbf{1}_{X}(z)\sgn(a_{X})-\mathbf{1}_{MX}(Mz)\sgn(a_{MX})}{Q_{X}(z)}.
			\end{align*}
			The characteristic functions $\mathbf{1}_{X}$ and $\mathbf{1}_{MX}$ are related by 
			\[
			\mathbf{1}_{MX}(Mz) = \begin{cases}
			\mathbf{1}_{X}(z), & \text{if } a_{X}\cdot a_{MX} > 0, \\
			1- \mathbf{1}_{X}(z),& \text{if } a_{X}\cdot a_{MX} < 0.
			\end{cases}
			\]
			In particular, all summands with $a_{X}\cdot a_{MX} > 0$ cancel out. In the remaining sum over $X$ with $a_{X}\cdot a_{MX} < 0$, we replace $X$ with $-X$ if $a_{X} < 0$, giving a factor $2$. This proves the transformation behaviour of $F_{\Delta,r}(f,z)$ for $z \in \H \setminus H_{\Delta,r}^{-}(f)$. Since all the functions appearing in the transformation formula are holomorphic on $\H$, we obtain the transformation law by analytic continuation.
			
			Using $\Phi'_{\Delta,r}(f,z)|_{2}W_{d} = \Phi'_{\Delta,r}(f^{w_{d}},z)$ we obtain
			\[
			F_{\Delta,r}(f,z)|_{2}W_{d} = F_{\Delta,r}(f^{w_{d}},z)+F^{*}_{\Delta,r}(f^{w_{d}},z)+F^{*}_{\Delta,r}(f,z)|W_{d}.
			\]
			Since $F_{\Delta,r}(f^{w_{d}},z)$ is holomorphic at $\infty$, and $F^{*}_{\Delta,r}(f^{w_{d}},z)$ and $F^{*}_{\Delta,r}(f,z)|_{2}W_{d}$ vanish as $y \to \infty$, we see that $F_{\Delta,r}(f,z)$ is holomorphic at the cusps.
		\end{proof}
		
		\begin{example}\label{example weight 2 modular integral}
			Let $\Delta > 1$. We apply Proposition \ref{proposition weight 2 modular integral} to a harmonic Maass form $f \in H_{1/2,\rho^{*}_{L}}$ arising as the image of the regularized theta lift studied by Bruinier, Funke and Imamoglu in \cite{bif} of a harmonic Maass form $F \in H_{0}^{+}(\Gamma_{0}(N))$. We assume that the constant coefficients $a_{\ell}^{+}(0)$ of $F$ vanish at all cusps. By Theorem 4.1 in \cite{bif} the Fourier expansion of the $h$-th component of $f$ is given by
			\begin{align*}
			f_{h}(\tau) &= -2\tr_{F}(0,h)\sqrt{v} \\
			&\quad + \sum_{\substack{n < 0}}\tr_{F}(-n,h)\sqrt{v}\beta_{1/2}(4\pi|n|v)e(n\tau) 
 \\
 			&\quad + \sum_{\substack{n > 0}}\frac{\sqrt{N}}{\pi}\tr_{F}(-n,h)e(n\tau) \\
			&\quad + \sum_{\substack{n > 0}}\tr_{F}^{c}(-n^{2}/4N,h) \sqrt{v}\beta_{1/2}^{c}(-4\pi n^{2}v/4N)e(n^{2}\tau/4N),
			\end{align*}
			with the traces
			\[
			\tr_{F}(-n,h) = \begin{dcases}
			\sum_{X \in \Gamma_{0}(N)\backslash L_{-n,h}}\frac{1}{|\overline{\Gamma}_{X}|}F(z_{X}),  &n < 0, \\
			-\delta_{0,h}\frac{1}{2\pi}\int_{\Gamma_{0}(N)\setminus \H}^{\reg}F(z) \frac{dx \, dy}{y^{2}}, & n = 0, \\
			\sum_{X \in \Gamma_{0}(N)\backslash L_{-n,h}}\int_{\Gamma_{X}\backslash c_{X}}^{\reg}F(z)\frac{dz}{Q(z,1)},  &n > 0,
			\end{dcases}
			\]
			and the so-called complementary trace $\tr_{F}^{c}(-n^{2}/4N,h)$, which is defined in \cite{bif}, Section 3. Our definition of the traces of cycle integrals equals $\pi/\sqrt{N}$ times the traces of cycle integrals defined in \cite{bif}, and the traces for $|n|/N$ being a square need to be regularized as explained in \cite{bif}, Section 3. Note that the trace of index $0$ and the complementary trace can be evaluated explicitly in terms of the principal parts of $F$ at the cusps of $\Gamma_{0}(N)$, see \cite{bruinierfunke06}, Remark 4.9, and that the complementary trace is nonzero only for finitely many $n$, see \cite{bruinierfunke06}, Proposition 4.7. Observe that $c_{f}^{+}(n,h) = 0$ for $n < 0$ and $c_{f}^{-}(\Delta m^{2}/4N,rm) = 0$ for $m \in \Z, m > 0,$ if $\Delta > 1$. The Duke-Imamoglu-T\'oth harmonic Maass form $h(\tau)=h_{1}(\tau)$ from the introduction can be constructed as the Bruinier-Funke-Imamoglu lift of $\frac{1}{2}J$.
			
			By Proposition \ref{proposition weight 2 modular integral}, for $\Delta > 1$ a fundamental discriminant the function 
			\[
			F_{\Delta,r}(f,z) = \frac{1}{2\pi}L_{\Delta}(1)\tr_{F}(0,0) + \frac{1}{\pi}\sum_{n = 1}^{\infty}\left(\sum_{d \mid n}\left( \frac{\Delta}{n/d}\right)d\tr_{F}\left(-\Delta d^{2}/4N,rd\right)\right)e(nz) 
			\]
			is a holomorphic function on $\H$, which transforms under the weight $2$ slash operation of $M \in \Gamma_{0}(N)$ by
			\begin{align*}
			F_{\Delta,r}(f,z)|_{2}M - F_{\Delta,r}(f,z) = -\frac{1}{\pi}\sum_{h \in L'/L}\sum_{n > 0}\tr_{F}^{c}(-n^{2}/4N,h)\sum_{\substack{X \in L_{-\Delta n^{2}/4N,rh} \\ a_{MX} < 0 < a_{X}}}\frac{1}{Q_{X}(z)}.
			\end{align*}
			Since $\chi_{\Delta}(X) = 1$ for $X \in L_{-\Delta n^{2}/4N,rh}$ we dropped it from the notation.
			
			In the special case $N = 1$ and $F = \frac{1}{2}J$ (with $\tr_{J}(0,0) = 4$ and $\tr_{J}^{c}(-1/4,1) = 2$) we recover the transformation behaviour of the modular integral $F_{\Delta}(z)$ of Duke, Imamoglu and T\'oth \cite{dit} stated in the introduction.
		\end{example}

	\subsection{Borcherds products}\label{section Borcherds products}

	In this section we construct twisted Borcherds products of harmonic Maass forms $f \in H_{1/2,\tilde{\rho}^{*}_{L}}$. For simplicity we assume $\Delta \neq 1$.

In order to generalize the Borcherds product to the full space $H_{1/2,\tilde{\rho}^{*}_{L}}$ we first recall the construction of certain weight $0$ and weight $2$ cocycles from \cite{ditlinking}, which will appear in the transformation rule of the Borcherds product.

\begin{lemma}
	Let $n > 0$ such that $N|\Delta|n$ is not a square, and let $\mathcal{A} \in \Gamma_{0}(N)\backslash L_{-|\Delta|n,rh}$. Then the function
	\[
	q_{M}^{\mathcal{A}}(z) = \sum_{\substack{X \in \mathcal{A} \\ a_{MX} < 0 < a_{X}}}\frac{1}{Q_{X}(z)}
	\]
	defines a weight $2$ cocycle with values in the rational functions which are holomorphic on $\H$.
\end{lemma}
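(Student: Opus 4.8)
The plan is to establish two things: that $q_M^{\mathcal{A}}$ is a \emph{finite} sum, so that it is a rational function holomorphic on $\H$, and that $M \mapsto q_M^{\mathcal{A}}$ satisfies the cocycle relation $q_{MM'}^{\mathcal{A}} = q_M^{\mathcal{A}}|_2 M' + q_{M'}^{\mathcal{A}}$.

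\emph{Finiteness and holomorphy.} First I would fix $M = \left(\begin{smallmatrix} a & b \\ c & d \end{smallmatrix}\right)$ and show that $\{X \in \mathcal{A} : a_{MX} < 0 < a_X\}$ is finite. A direct computation gives that, up to the fixed positive factor $c^2/N$, the leading coefficient of $Q_{MX}$ equals $Q_X(M^{-1}\infty)$, so the condition $a_{MX} < 0 < a_X$ says that $Q_X$ has positive leading coefficient and $Q_X(M^{-1}\infty) < 0$, i.e.\ the cusp $M^{-1}\infty$ lies strictly under the arc of the semicircle $c_X$. Here the hypothesis that $N|\Delta|n$ is not a square enters: the discriminant $4N|\Delta|n$ of $Q_X$ is then not a perfect square, the endpoints of $c_X$ are irrational, and $Q_X(M^{-1}\infty)$ is a nonzero rational of bounded denominator. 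Comparing it with the minimum value $-|\Delta|n/a_X$ of $Q_X$ on $\R$ bounds $a_X$ from above, and with the discriminant fixed only finitely many integral forms remain (if $c = 0$ the set is empty and $q_M^{\mathcal{A}} = 0$). Since each $Q_X(z) = a_X N z^2 + bz + c$ has its two zeros on $\partial\H$, every summand $1/Q_X(z)$ is holomorphic on $\H$, so $q_M^{\mathcal{A}}$ is a rational function holomorphic on $\H$.

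\emph{Cocycle relation.} The conceptual route is to realise $q_M^{\mathcal{A}}$ as a coboundary, which makes the cocycle property automatic. I would introduce the locally finite function $G^{\mathcal{A}}(z) = \sum_{X \in \mathcal{A}} \sgn(a_X)\,\mathbf{1}_X(z)/Q_X(z)$, smooth away from the geodesics $c_X$. Using $Q_X(z)|_{-2}M = Q_{M^{-1}X}(z)$, hence $Q_X(z)^{-1}|_2 M = Q_{M^{-1}X}(z)^{-1}$, together with the transformation rule
\[
\mathbf{1}_{MX}(Mz) = \begin{cases} \mathbf{1}_X(z), & a_X a_{MX} > 0, \\ 1 - \mathbf{1}_X(z), & a_X a_{MX} < 0, \end{cases}
\]
already established in the proof of Proposition \ref{proposition weight 2 modular integral}, I would compute $G^{\mathcal{A}} - G^{\mathcal{A}}|_2 M$ after reindexing $X \mapsto MX$. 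The decisive point is that all $z$-dependent indicator terms cancel: in both cases the numerator collapses to the constant $\sgn(a_X)\,\mathbf{1}[a_X a_{MX} < 0]$, so $G^{\mathcal{A}} - G^{\mathcal{A}}|_2 M$ is the jump-free rational function $\sum_X \tfrac12(\sgn(a_X) - \sgn(a_{MX}))/Q_X(z)$. As a coboundary of a $0$-cochain this signed sum is automatically a weight $2$ cocycle with values in rational functions holomorphic on $\H$, and the underlying pointwise identity telescopes trivially, $\tfrac12(\sgn(a_Y) - \sgn(a_{MM'Y})) = \tfrac12(\sgn(a_Y) - \sgn(a_{M'Y})) + \tfrac12(\sgn(a_{M'Y}) - \sgn(a_{MM'Y}))$ after substituting $Y = (M')^{-1}X$.

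\emph{Main obstacle.} The genuinely delicate step is matching this coboundary with the stated one-sided sum $q_M^{\mathcal{A}}$. Splitting $\tfrac12(\sgn(a_X) - \sgn(a_{MX}))$ into its two halves produces the sum over $a_{MX} < 0 < a_X$ \emph{minus} the sum over $a_X < 0 < a_{MX}$, i.e.\ the coboundary records the \emph{signed} crossing count while $q_M^{\mathcal{A}}$ records only the positively oriented crossings. Reindexing the negatively oriented part by $X \mapsto -X$ (which fixes $c_X$ and $\mathbf{1}_X$, reverses $\sgn(a_X)$, and sends $Q_X \mapsto -Q_X$) turns it into the corresponding positively oriented count for the negated orbit, so that the coboundary coincides with $q_M^{\mathcal{A}}$ precisely on the negation-closed sum carrying the genus character. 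Controlling this orientation bookkeeping — ensuring the two sign-conventions agree on the objects actually used — is the only non-formal part of the argument and is where I expect the real work to lie; once it is settled, the cocycle property is inherited from the coboundary.
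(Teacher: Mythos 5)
Your strategy coincides with the paper's: the paper's own proof also realises $q_M^{\mathcal{A}}$ as the coboundary of the indicator-weighted sum $\sum_{X}\sgn(a_X)\mathbf{1}_X(z)/Q_X(z)$, quoting the identity $\mathbf{1}_{MX}(Mz)=\mathbf{1}_X(z)$ resp.\ $1-\mathbf{1}_X(z)$ from the proof of Proposition \ref{proposition weight 2 modular integral}, and your finiteness argument (which the paper omits, and which is where the non-square hypothesis enters) is correct and a genuine addition. However, the ``orientation bookkeeping'' you flag at the end is not a deferrable verification: it is a real gap, and for a single orbit it cannot be closed. The coboundary computation yields the two-sided signed sum $\sum_{a_{MX}<0<a_X}1/Q_X-\sum_{a_X<0<a_{MX}}1/Q_X$, so the discrepancy with the one-sided $q_M^{\mathcal{A}}$ is $\sum_{X\in\mathcal{A},\,a_Xa_{MX}<0}1/Q_X(z)$. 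This vanishes when the index set is stable under $X\mapsto-X$ (pair $X$ with $-X$ and use $Q_{-X}=-Q_X$), but $-X$ lies in $L_{-|\Delta|n,-rh}$, so $-\mathcal{A}$ is in general a \emph{different} orbit and your reindexing leaves $\mathcal{A}$.

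Concretely, take $N=1$ and discriminant $4N|\Delta|n=12$, and let $\mathcal{A}$ be the narrow class of $[1,0,-3]$, which is not negation-closed (its negative $[-1,0,3]$ lies in the other class, since the two reduced cycles are $\{[1,2,-2],[-2,2,1]\}$ and $\{[2,2,-1],[-1,2,2]\}$). Then $q_{S^2}^{\mathcal{A}}=q_{-I}^{\mathcal{A}}=0$, while $q_S^{\mathcal{A}}|_2S+q_S^{\mathcal{A}}=\sum_{Q\in\mathcal{A},\,ac<0}1/Q(z)$ runs over the six forms $[1,0,-3]$, $[1,\pm2,-2]$, $[-3,0,1]$, $[-2,\pm2,1]$ and evaluates at $z=2i$ to $-\tfrac{1}{7}-\tfrac{2}{13}+\tfrac{18}{97}\neq 0$; the cocycle relation fails. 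So the single-orbit statement is false, and the paper's own proof performs the same silent $X\mapsto-X$ step. The lemma becomes correct if $\mathcal{A}$ is replaced by a negation-stable set such as $\mathcal{A}\cup(-\mathcal{A})$, which is what actually occurs in Proposition \ref{proposition weight 2 modular integral} and Theorem \ref{theorem new borcherds products}, where one sums over all $h$ and all orbits with weights symmetric under $(X,h)\mapsto(-X,-h)$. You should therefore either prove the statement for $\mathcal{A}\cup(-\mathcal{A})$ or add the hypothesis $-\mathcal{A}=\mathcal{A}$; no amount of bookkeeping will rescue the claim as stated.
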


\begin{proof}
	As in the proof of Proposition \ref{proposition weight 2 modular integral} we compute
	\[
	\sum_{\substack{X \in \mathcal{A} \\ a_{MX} < 0 < a_{X}}}\frac{1}{Q_{X}(z)} =  \sum_{\substack{X \in \mathcal{A} \\ a > 0}}\frac{\mathbf{1}_{X}(z)\sgn(a)}{Q_{X}(z)}-\sum_{\substack{X \in \mathcal{A} \\ a > 0}}\frac{\mathbf{1}_{X}(z)\sgn(a)}{Q_{X}(z)}\bigg|_{2} M
	\]
	for $z$ not lying on any geodesic $c_{X}$ with $X \in \mathcal{A}$. This easily implies that the map $M \mapsto q_{M}^{\mathcal{A}}$ is a weight $2$ cocycle.
\end{proof}

Next, we would like to construct a weight $0$ cocycle $R_{M}^{\mathcal{A}}(z)$ with values in the holomorphic functions on $\H$ such that $\frac{\partial}{\partial z}R_{M}^{\mathcal{A}}(z) = q_{M}^{\mathcal{A}}(z)$. The following proposition gives such a construction for general cocycles with values in rational functions which are holomorphic on $\H$.

\begin{proposition}[\cite{ditlinking}, Theorem 2.1]\label{proposition weight 0 cocycle}
	Let $F(z) = \sum_{n\geq 0}a(n)e(nz)$ be a holomorphic modular integral of weight $2$ for $\Gamma_{0}(N)$ with rational period functions $q_{M} = F|_{2}M-F$. Assume that $a(n) \ll n^{\alpha}$ for some $\alpha > 0$. For $M = \left( \begin{smallmatrix}a & b \\c & d\end{smallmatrix}\right) \in \Gamma_{0}(N)$ with $c \neq 0$ we let
	\begin{align*}
	\Lambda\left(s,\frac{a}{c}\right) = \left(\frac{2\pi}{c}\right)^{-s}\Gamma(s)\sum_{n \geq 1}a(n)e\left(\frac{an}{c}\right)n^{-s}
	\end{align*} 
	and
	\begin{align*}
	H\left(s,\frac{a}{c}\right) &= \Lambda\left(s,\frac{a}{c}\right) + \int_{1}^{\infty}q_{M}(-d/c+it/c)t^{1-s}dt + \frac{a(0)}{s}-\frac{a(0)}{2-s}.
	\end{align*}
	Then $H\left(s,\frac{a}{c}\right)$ is entire and satisfies the functional equation $H\left(s,\frac{a}{c}\right) = -H\left(2-s,-\frac{d}{c}\right)$. Further, for $c \neq 0$ we set
	\[
	R_{M}(z) = -\frac{i}{c}H\left(1,\frac{a}{c}\right) + \int_{-\frac{d}{c} + \frac{i}{c}}^{z}q_{M}(w)dw + a(0) \frac{a+d}{c},
	\]
	and for $M = \pm \left(\begin{smallmatrix}1 & n \\ 0 &1 \end{smallmatrix}\right)$ we let $R_{M}(z) = na(0)$. Then $R_{M}(z)$	defines a weight $0$ cocycle for $\Gamma_{0}(N)$ with values in the holomorphic functions on $\H$, and which satisfies $\frac{\partial}{\partial z}R_{M}(z) = q_{M}(z)$ for every $M \in \Gamma_{0}(N)$. 
\end{proposition}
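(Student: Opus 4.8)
\emph{Strategy.} The plan is to split the statement into an analytic part, the continuation and functional equation of $H(s,a/c)$, and an algebraic part, the cocycle property of $R_{M}$, and to tie the two together through the weight $0$ Eichler integral of $F$. The organizing observation I would use is that
\begin{align*}
\mathcal{E}(z) = a(0)z + \sum_{n \geq 1}\frac{a(n)}{2\pi i n}e(nz)
\end{align*}
is holomorphic on $\H$ (the series converges by $a(n) \ll n^{\alpha}$) and satisfies $\mathcal{E}' = F$. For any such primitive the assignment $M \mapsto \mathcal{E}|_{0}M - \mathcal{E}$ is automatically a weight $0$ cocycle, and its $z$-derivative is $F|_{2}M - F = q_{M}$. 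Thus the whole statement about $R_{M}$ reduces to the identity $R_{M} = \mathcal{E}|_{0}M - \mathcal{E}$, i.e. to checking that the explicit constant $-\frac{i}{c}H(1,a/c) + a(0)\frac{a+d}{c}$ is exactly the constant of integration produced by $\mathcal{E}$.

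For the analytic part I would run the Riemann--Hecke method. Writing $\phi(t) = F(a/c + it/c) - a(0)$ and unfolding the Fourier expansion gives, for $\Re(s)$ large, $\Lambda(s,a/c) = \int_{0}^{\infty}\phi(t)\,t^{s-1}\,dt$, and I split this at $t = 1$. The low range is transformed using modularity: from $M^{-1}(a/c + it/c) = -d/c + i/(ct)$ and $c\,M^{-1}(a/c+it/c) + d = i/t$, evaluating $F|_{2}M = F + q_{M}$ at $M^{-1}(a/c+it/c)$ yields
\begin{align*}
F\left(\frac{a}{c} + \frac{it}{c}\right) = -t^{-2}\left[F\left(-\frac{d}{c} + \frac{i}{ct}\right) + q_{M}\left(-\frac{d}{c} + \frac{i}{ct}\right)\right].
\end{align*}
Substituting this and changing variables $t \mapsto 1/t$ turns $\int_{0}^{1}$ into a convergent integral over the dual cusp $-d/c$, plus the integral $\int_{1}^{\infty}q_{M}(-d/c + it/c)t^{1-s}\,dt$ and two elementary poles from the constant term $a(0)$. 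These are precisely the correction terms in the definition of $H$, which cancel to leave
\begin{align*}
H\left(s,\frac{a}{c}\right) = \int_{1}^{\infty}\phi(t)\,t^{s-1}\,dt - \int_{1}^{\infty}\psi(t)\,t^{1-s}\,dt, \qquad \psi(t) = F\left(-\frac{d}{c} + \frac{it}{c}\right) - a(0).
\end{align*}
Both integrals converge for every $s$ by the exponential decay of $\phi$ and $\psi$, so $H$ is entire; and the antisymmetric shape of this expression, together with the fact that running the same construction for $M^{-1}\in\Gamma_{0}(N)$ (whose cusp is $-d/c$ and whose dual cusp is $a/c$) interchanges $\phi$ and $\psi$, gives the functional equation $H(s,a/c) = -H(2-s,-d/c)$.

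For the cocycle I would first note that $\frac{\partial}{\partial z}R_{M} = q_{M}$ is immediate, as only the term $\int_{-d/c+i/c}^{z}q_{M}(w)\,dw$ depends on $z$, and that the parabolic generators $\pm\left(\begin{smallmatrix}1&n\\0&1\end{smallmatrix}\right)$ have $q_{M} = 0$ (since $F$ has period $1$) and $R_{M} = na(0) = \mathcal{E}(z+n)-\mathcal{E}(z)$. For $c \neq 0$, writing $z_{M} = -d/c + i/c$ and using $Mz_{M} = a/c + i/c$ together with the substitution $w \mapsto Mw$ in $\int_{z_{M}}^{z}q_{M} = \int_{z_{M}}^{z}(F|_{2}M - F)$, a short manipulation collapses $\mathcal{E}(Mz) - \mathcal{E}(z) = \int_{z}^{Mz}F$ to $\int_{z}^{Mz}F = \int_{z_{M}}^{z}q_{M} + \int_{z_{M}}^{Mz_{M}}F$. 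Since the constant term contributes $a(0)(Mz_{M} - z_{M}) = a(0)\frac{a+d}{c}$, everything reduces to the single identity
\begin{align*}
\int_{z_{M}}^{Mz_{M}}(F(w) - a(0))\,dw = -\frac{i}{c}H\left(1,\frac{a}{c}\right).
\end{align*}
This I would verify by deforming the horizontal segment from $-d/c + i/c$ to $a/c + i/c$ (legitimate since $F$ is holomorphic on $\H$) into the two vertical rays over the cusps $-d/c$ and $a/c$; the connecting arc near $i\infty$ contributes nothing because $F - a(0) \to 0$, and parametrising the rays by $t$ produces exactly $\frac{i}{c}\int_{1}^{\infty}\psi\,dt - \frac{i}{c}\int_{1}^{\infty}\phi\,dt = -\frac{i}{c}H(1,a/c)$ by the integral representation of $H$ at $s = 1$ from the previous step. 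This gives $R_{M} = \mathcal{E}|_{0}M - \mathcal{E}$, and hence both $\frac{\partial}{\partial z}R_{M} = q_{M}$ and the cocycle relation simultaneously.

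I expect the main obstacle to be the analytic continuation in the second step: justifying the termwise Mellin transform in its range of absolute convergence, the interchange of summation and integration after the $t \mapsto 1/t$ folding, and the exact cancellation of the $a(0)$-poles and the $q_{M}$-integral against the correction terms in $H$. Once this is secured, the value $H(1,a/c)$ is well defined and the cocycle statement follows essentially formally from the Eichler integral identification; the only remaining care is the contour deformation in the bridge identity and the normalization for $-M$ ensuring that $R_{M}$ descends to $\Gamma_{0}(N)$.
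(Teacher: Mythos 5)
Your proposal is correct and follows essentially the same route as the paper's (sketched) proof: your Eichler integral $\mathcal{E}$ is the paper's primitive $G$, your Riemann--Hecke computation yields exactly the paper's integral representation $H(s,a/c)=\int_{1}^{\infty}\phi(t)t^{s-1}dt-\int_{1}^{\infty}\psi(t)t^{1-s}dt$, and your bridge identity is precisely the ``short calculation'' establishing $R_{M}=G\circ M-G$, from which the cocycle property and $\frac{\partial}{\partial z}R_{M}=q_{M}$ follow. You merely make explicit (via the contour deformation identifying $-\tfrac{i}{c}H(1,a/c)$ with $\int_{z_{M}}^{Mz_{M}}(F-a(0))\,dw$) the steps the paper leaves to the reader.
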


\begin{proof}
	The proof is exactly the same as that of \cite{ditlinking}, Theorem 2.1, so we only give a sketch. By a standard computation we obtain for $c \neq 0$ the integral representation
	\[
	H\left(s,\frac{a}{c} \right) = -\int_{1}^{\infty}(F(z_{1/t})-a(0))t^{1-s}dt + \int_{1}^{\infty}(F(M z_{t})-a(0))t^{s-1}dt,
	\]
	where $z_{t} = -\frac{d}{c}+\frac{i}{ct}$. Since $z_{1/t} = -\frac{d}{c}+\frac{it}{c}$ and $M z_{t} = \frac{a}{c}+\frac{it}{c}$, we see that $H\left(s,\frac{a}{c} \right)$ is entire and satisfies the claimed functional equation. Further, we let 
	\[
	G(z) = a(0)z + \sum_{n \geq 1}\frac{a(n)}{2\pi i n}e(nz)
	\]
	be a primitive of $F(z)$. By taking the limit $s \to 1$ in $H\left(s,\frac{a}{c} \right)$ we obtain after a short calculation
	\[
	R_{M}(z) = G(M z) - G(z),
	\]
	which is valid for all $M \in \Gamma_{0}(N)$ and defines a weight $0$ cocycle with values in the holomorphic functions on $\H$, and $\frac{\partial}{\partial z}R_{M}(z) = q_{M}(z)$.
\end{proof}

\begin{lemma}
	Let $q_{M}^{\mathcal{A}}$ be the weight $2$ cocycle associated to $\mathcal{A} \in \Gamma_{0}(N) \setminus L_{-|\Delta|n,rh}$ as above. For $X \in \mathcal{A}$ let $w_{X} > w_{X}'$ denote the two real endpoints of the geodesic $c_{X}$. Let $F(z) = \sum_{n \geq 0}a(n)q^{n}$ be a modular integral for $q_{M}^{\mathcal{A}}$ with $a(n) \ll n^{\alpha}$ for some $\alpha > 0$ and let $M = \left(\begin{smallmatrix}a & b \\ c & d \end{smallmatrix} \right) \in \Gamma_{0}(N)$. Further, for $c \neq 0$ let
	\begin{align*}
	L_{F}\left(s,\frac{a}{c}\right) = \sum_{n \geq 1}a(n)e\left(\frac{an}{c}\right)n^{-s}.
	\end{align*}
	and
	\[
	R_{M}^{\mathcal{A}}(z) = \frac{1}{\sqrt{4N|\Delta|n}}\sum_{\substack{X \in \mathcal{A} \\ a_{MX} < 0 < a_{X}}}\left(\log(z-w_{X}) - \log(z-w_{X}')\right) + \frac{1}{2\pi i }L_{F}\left(1,\frac{a}{c}\right) + a(0)\frac{a+d}{c},
	\]
	and for $M = \pm \left(\begin{smallmatrix}1 & n \\ 0 &1 \end{smallmatrix}\right)$ we let $R_{M}^{\mathcal{A}}(z) = na(0)$. Then $R_{M}^{\mathcal{A}}(z)$ is a weight $0$ cocycle with values in the holomorphic functions on $\H$ which satisfies $\frac{\partial}{\partial z}R_{M}^{\mathcal{A}}(z) = q_{M}^{\mathcal{A}}(z)$.
\end{lemma}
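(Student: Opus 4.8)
The plan is to deduce the statement directly from Proposition~\ref{proposition weight 0 cocycle}, applied to the weight $2$ cocycle $q_M^{\mathcal{A}}$ and the modular integral $F$. Since $a(n) \ll n^{\alpha}$ by hypothesis, that proposition produces a weight $0$ cocycle $R_M$ with values in the holomorphic functions on $\H$ satisfying $\frac{\partial}{\partial z}R_M(z) = q_M^{\mathcal{A}}(z)$, and its proof moreover identifies $R_M(z) = G(Mz) - G(z)$ for a primitive $G$ of $F$. It therefore suffices to show that the explicit expression for $R_M^{\mathcal{A}}(z)$ coincides with this $R_M(z)$; the cocycle property and the differential equation $\frac{\partial}{\partial z}R_M^{\mathcal{A}}(z) = q_M^{\mathcal{A}}(z)$ then follow for free. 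For $M = \pm\left(\begin{smallmatrix}1 & n \\ 0 & 1\end{smallmatrix}\right)$ both definitions give $na(0)$, so we may assume $c \neq 0$.

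The key computation is to evaluate the integral $\int_{-d/c+i/c}^{z}q_M^{\mathcal{A}}(w)\,dw$ appearing in Proposition~\ref{proposition weight 0 cocycle} in closed form. For each $X \in \mathcal{A}$ with $a_X > 0$ occurring in the finite sum defining $q_M^{\mathcal{A}}$, the quadratic $Q_X(z) = a_X N z^{2} + b z + c$ has discriminant $b^{2} - 4 a_X N c = 4N|\Delta|n$, because $Q(X) = -|\Delta|n$, so its two real roots are exactly the endpoints $w_X > w_X'$ of the geodesic $c_X$, and $a_X N (w_X - w_X') = \sqrt{4N|\Delta|n}$. Partial fractions then give
\[
\frac{1}{Q_X(z)} = \frac{1}{\sqrt{4N|\Delta|n}}\left(\frac{1}{z - w_X} - \frac{1}{z - w_X'}\right),
\]
whence
\[
\int^{z}\frac{dw}{Q_X(w)} = \frac{1}{\sqrt{4N|\Delta|n}}\bigl(\log(z - w_X) - \log(z - w_X')\bigr) + \text{const}.
\]
Since $w_X, w_X' \in \R$ and $\imag(z) > 0$, the principal branches are holomorphic on all of $\H$, so summing over $X$ reproduces the first term of $R_M^{\mathcal{A}}(z)$ up to an additive constant, and exhibits $R_M^{\mathcal{A}}$ as a holomorphic function on $\H$.

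It remains to match the constants. Writing $\Lambda(s,a/c) = (2\pi/c)^{-s}\Gamma(s)L_F(s,a/c)$ we have $\Lambda(1,a/c) = \frac{c}{2\pi}L_F(1,a/c)$, so the contribution of $\Lambda$ to $-\frac{i}{c}H(1,a/c)$ is exactly $\frac{1}{2\pi i}L_F(1,a/c)$. At $s = 1$ the polar terms $\frac{a(0)}{s} - \frac{a(0)}{2-s}$ in $H(s,a/c)$ cancel, and the remaining integral $\int_1^\infty q_M^{\mathcal{A}}(-d/c+it/c)\,dt$ combines with the additive constant coming from the lower limit $-d/c + i/c$; using that the antiderivative from the previous paragraph vanishes as $z \to i\infty$, one checks these two leftover pieces cancel, while the term $a(0)\frac{a+d}{c}$ is common to both expressions. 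The main obstacle is precisely this last bookkeeping: one must track the branch of the logarithm and the base point of integration carefully to confirm the cancellation, so that $R_M^{\mathcal{A}}(z)$ agrees with the $R_M(z) = G(Mz) - G(z)$ furnished by Proposition~\ref{proposition weight 0 cocycle}. Once this identification is made, all asserted properties are inherited from that proposition.
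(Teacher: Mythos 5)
Your proposal is correct and follows essentially the same route as the paper: write $q_{M}^{\mathcal{A}}$ in partial fractions using the real roots $w_{X}, w_{X}'$ of $Q_{X}$, take the logarithmic sum as the primitive (which vanishes at $i\infty$), and read off the formula from Proposition \ref{proposition weight 0 cocycle}. The paper's proof is terser, simply asserting that the formula "follows" from that proposition once the primitive is chosen, whereas you carry out the constant-matching ($\Lambda(1,a/c)=\tfrac{c}{2\pi}L_{F}(1,a/c)$, cancellation of the polar terms at $s=1$, and the cancellation of the base-point contribution against $-\tfrac{i}{c}\int_{1}^{\infty}q_{M}(-d/c+it/c)\,dt$) explicitly and correctly.
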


\begin{proof}
	Note that
	\[
	q_{M}^{\mathcal{A}}(z) = \frac{1}{\sqrt{4N|\Delta|n}}\sum_{\substack{X \in \mathcal{A} \\ a_{MX} < 0 < a_{X}}}\left(\frac{1}{z-w_{X}} - \frac{1}{z-w_{X}'}\right).
	\]
	Thus if we choose
	\[
	\frac{1}{\sqrt{4N|\Delta|n}}\sum_{\substack{X \in \mathcal{A} \\ a_{MX} < 0 < a_{X}}}\left(\log(z-w_{X}) - \log(z-w_{X}')\right)
	\]
	as a primitive for $q_{M}^{\mathcal{A}}(z)$, the formula for $R_{M}^{\mathcal{A}}(z)$ follows from Proposition \ref{proposition weight 0 cocycle}.
\end{proof}

\begin{example}
	Let $N = 1, \Delta > 1,$ and $M = S = \left(\begin{smallmatrix}0 & -1 \\ 1 & 0 \end{smallmatrix}\right)$. We have
	\[
	q_{S}^{\mathcal{A}}(z) = \sum_{\substack{X \in \mathcal{A} \\ c < 0 < a}}\frac{1}{Q_{X}(z)}.
	\]
	It easily follows from the definition and the functional equation of $H(s,0)$ given in Proposition \ref{proposition weight 0 cocycle} that 
	\[
	L_{F}(1,0) = -\frac{2\pi i }{\sqrt{4\Delta n}} \sum_{\substack{X \in \mathcal{A} \\ c < 0 < a}}\left(\log(i-w_{X})-\log(i-w_{X}')\right)
	\]
	independently of the modular integral $F$ for $q^{\mathcal{A}}$. In particular, we obtain
	\[
	R_{S}^{\mathcal{A}}(z) = \frac{1}{\sqrt{4\Delta n}}\sum_{\substack{X \in \mathcal{A} \\ c < 0 < a}}\left(\log\left(\frac{z-w_{X}}{i-w_{X}}\right)-\log\left(\frac{z-w_{X}'}{i-w_{X}'}\right)\right).
	\]
\end{example}

We can now state the transformation behaviour of the Borcherds product associated to $f \in H_{1/2,\tilde{\rho}^{*}_{L}}$.

\begin{theorem}\label{theorem new borcherds products}
	Let $\Delta \neq 1$ be a fundamental discriminant. Let $f \in H_{1/2,\tilde{\rho}^{*}_{L}}$ and suppose that $c_{f}^{+}(|\Delta|m^{2}/4N,rm) \in \R$ for all $m \in \Z, m > 0$. Further, assume that $c_{f}^{+}(n,h) = 0$ for all $n < 0, h \in L'/L,$ and that $c_{f}^{-}(|\Delta|m^{2}/4N,rm) = 0$ for all $m \in \Z,m > 0$. Then the infinite product
	\begin{align*}
	\Psi_{\Delta,r}(f,z) &= e\left( \frac{\sqrt{|\Delta| N}}{4\pi}L_{\Delta}(1)c_{f}^{-}(0,0)z\right)\prod_{m=1}^{\infty}\prod_{b (\Delta)}[1-e(mz+b/\Delta)]^{\left(\frac{\Delta}{b}\right)c_{f}^{+}(|\Delta|m^{2}/4N,rm)}
	\end{align*}
	converges to a holomorphic function on $\H$ transforming as
	\begin{align*}\label{eq Borcherds product transformation}
	\Psi_{\Delta,r}(f,Mz) = \chi(M)\mu_{\Delta,r}(f,M,z)\Psi_{\Delta,r}(f,z)
	\end{align*}
	for all $M \in \Gamma_{0}(N)$, where $\chi$ is a character of $\Gamma_{0}(N)$ and
	\[
	\mu_{\Delta,r}(f,M,z) = \prod_{h \in L'/L}\prod_{n > 0}\prod_{\mathcal{A} \in \Gamma_{0}(N)\backslash L_{-|\Delta|n,rh}}e\left(-\frac{\sqrt{|\Delta|N}}{\pi}c_{f}^{-}(n,h)\chi_{\Delta}(\mathcal{A})R_{M}^{\mathcal{A}}(z)\right),
	\]
	where $R_{M}^{\mathcal{A}}(z)$ is the weight $0$ cocycle with $\frac{\partial}{\partial z}R_{M}^{\mathcal{A}}(z) = q_{M}^{\mathcal{A}}(z)$. Further, its logarithmic derivative is given by
	\[
	\frac{\partial}{\partial z}\log\left(\Psi_{\Delta,r}(f,z)\right) = -2\pi i\sqrt{|\Delta|N} F_{\Delta,r}(f,z),
	\]
	where $F_{\Delta,r}(f,z)$ is the modular integral defined in Proposition \ref{proposition weight 2 modular integral}. 
\end{theorem}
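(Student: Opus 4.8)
The plan is to route every assertion through the logarithmic derivative of the product, which ties $\Psi_{\Delta,r}(f,z)$ directly to the holomorphic modular integral $F_{\Delta,r}(f,z)$ of Proposition \ref{proposition weight 2 modular integral}. First I would establish convergence and holomorphy for $y \gg 0$. On $\H$ every factor $1 - e(mz+b/\Delta)$ is non-vanishing since $|e(mz+b/\Delta)| = e^{-2\pi m y} < 1$, so after taking logarithms it suffices to bound $\sum_{m}\sum_{b}\left(\tfrac{\Delta}{b}\right)c_f^+(|\Delta|m^2/4N,rm)\log(1-e(mz+b/\Delta))$. Using $\log(1-w) = O(|w|)$ together with the geometric decay $|e(mz+b/\Delta)| = e^{-2\pi m y}$, this series converges absolutely and locally uniformly once $y$ exceeds the (at most linear) exponential growth rate of the coefficients $c_f^+(|\Delta|m^2/4N,rm)$ in $m$; the exponential prefactor is entire. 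Hence the product defines a nowhere-vanishing holomorphic function on a half-plane $\{\imag(z) \gg 0\}$.

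Next I would compute $\tfrac{\partial}{\partial z}\log\Psi_{\Delta,r}(f,z)$ there by differentiating term by term: the prefactor contributes the constant $\tfrac{i}{2}\sqrt{|\Delta|N}L_\Delta(1)c_f^-(0,0)$, and for each factor I expand $\tfrac{e(mz+b/\Delta)}{1-e(mz+b/\Delta)}$ as a geometric series, apply the Gauss sum \eqref{eq Gauss sum} to the sum over $b$, and reindex the resulting double sum over $(m,k)$ by $n = mk$ to produce the Dirichlet-convolution coefficients of $F_{\Delta,r}$. Comparison with Proposition \ref{proposition weight 2 modular integral} yields $\tfrac{\partial}{\partial z}\log\Psi_{\Delta,r}(f,z) = -2\pi i\sqrt{|\Delta|N}F_{\Delta,r}(f,z)$ for $y \gg 0$, the $\varepsilon$ versus $\bar\varepsilon$ bookkeeping for $\Delta < 0$ being settled by the sign of $b/\Delta$ inside the Gauss sum. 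Since $F_{\Delta,r}(f,z)$ is holomorphic on all of $\H$ and $\H$ is simply connected, the function $\log\Psi$ defined for $y \gg 0$ extends to the holomorphic function $z \mapsto \log\Psi(z_0) - 2\pi i\sqrt{|\Delta|N}\int_{z_0}^{z}F_{\Delta,r}(f,w)\,dw$; exponentiating gives the holomorphic continuation of $\Psi_{\Delta,r}(f,z)$ to $\H$ and the asserted logarithmic derivative.

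For the transformation law I would fix $M \in \Gamma_0(N)$ and differentiate $\log\Psi(Mz) - \log\Psi(z)$. By the weight-$2$ transformation of $F_{\Delta,r}$ in Proposition \ref{proposition weight 2 modular integral}, rewritten through the cocycles $q_M^{\mathcal A} = \sum_{X \in \mathcal A,\, a_{MX}<0<a_X}Q_X(z)^{-1}$, and the relation $\tfrac{\partial}{\partial z}R_M^{\mathcal A}(z) = q_M^{\mathcal A}(z)$, the $z$-derivative of $\log\Psi(Mz) - \log\Psi(z) + \tfrac{\sqrt{|\Delta|N}}{\pi}\sum_{h,n,\mathcal A}c_f^-(n,h)\chi_\Delta(\mathcal A)R_M^{\mathcal A}(z)$ vanishes, so this quantity is a constant $C(M)$. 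Exponentiating recovers $\Psi(Mz) = e^{C(M)}\mu_{\Delta,r}(f,M,z)\Psi(z)$, and computing $\Psi(M_1M_2z)$ in two ways, using the cocycle property of $R_M^{\mathcal A}$ established in the preceding lemmas, shows that $M \mapsto \chi(M) := e^{C(M)}$ is a homomorphism, i.e.\ a character of $\Gamma_0(N)$.

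I expect the continuation step to be the main obstacle. The product $\Psi$ is holomorphic, whereas the Borcherds lift $\Phi_{\Delta,r}(f,z)$ has arcsin singularities along the geodesics in $H_{\Delta,r}^-(f)$, so the naive identity $\log|\Psi| = -\tfrac14\Phi_{\Delta,r}(f,z) + \mathrm{const}$ holds only for $y \gg 0$ and cannot be used to continue $\Psi$ across those geodesics. The correct route is the logarithmic-derivative identity above, whose right-hand side $F_{\Delta,r}(f,z)$ is holomorphic on all of $\H$; recognising that $F_{\Delta,r}$, and not $\Phi_{\Delta,r}$, is the object governing the continuation is the crux.
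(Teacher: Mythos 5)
Your proposal follows essentially the same route as the paper: everything is reduced to the identity $\frac{\partial}{\partial z}\log\Psi_{\Delta,r}(f,z) = -2\pi i\sqrt{|\Delta|N}\,F_{\Delta,r}(f,z)$ together with Proposition \ref{proposition weight 2 modular integral}, the transformation law is obtained by matching logarithmic derivatives via $\frac{\partial}{\partial z}R_{M}^{\mathcal{A}} = q_{M}^{\mathcal{A}}$, and the character property follows from the cocycle relation. The paper's proof is terser (it simply asserts that $\Psi_{\Delta,r}(f,Mz)$ and $\mu_{\Delta,r}(f,M,z)\Psi_{\Delta,r}(f,z)$ are holomorphic, non-vanishing, and have equal logarithmic derivatives), whereas you correctly make explicit the continuation of $\Psi_{\Delta,r}(f,z)$ from $y \gg 0$ to all of $\H$ by integrating the everywhere-holomorphic $F_{\Delta,r}$ over the simply connected $\H$; just note that the multiple of $\sum c_{f}^{-}(n,h)\chi_{\Delta}(\mathcal{A})R_{M}^{\mathcal{A}}(z)$ you add to $\log\Psi(Mz)-\log\Psi(z)$ should be $\log\mu_{\Delta,r}(f,M,z)$ itself, which carries an extra factor $2\pi i$ from the convention $e(x)=e^{2\pi i x}$ relative to what you wrote.
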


\begin{proof}
	Using Proposition \ref{proposition weight 2 modular integral} we see after a short calculation that the logarithmic derivatives of $\Psi_{\Delta,r}(f,Mz)$ and $\mu_{\Delta,r}(f,M,z)\Psi_{\Delta,r}(f,z)$ agree. Further, both functions are holomorphic and non-vanishing on $\H$. Hence they are constant multiples of each other. This proves the transformation behaviour. 
	
	The fact that $R_{M}^{\mathcal{A}}(z)$ is a weight $0$ cocycle together with the transformation formula of the Borcherds product implies that $\chi$ is a character of $\Gamma_{0}(N)$.
\end{proof}

\begin{example}
	Let $\Delta > 1$, and let $f \in H_{1/2,\rho^{*}_{L}}$ be the Bruinier-Funke-Imamoglu lift of a harmonic Maass form $F \in H_{0}^{+}(\Gamma_{0}(N))$ with vanishing constant coefficients $a_{\ell}^{+}(0)$ at all cusps as in Example~\ref{example weight 2 modular integral}. Its Borcherds lift is given by
	\begin{align*}
	\Psi_{\Delta,r}\left(\frac{\pi}{\sqrt{N}}f,z\right) &= \prod_{m=1}^{\infty}\prod_{b (\Delta)}[1-e(mz+b/\Delta)]^{\left(\frac{\Delta}{b}\right)\tr_{F}(-\Delta m^{2}/4N,rm)} \\
	&\quad\times e\left( -\frac{\sqrt{\Delta}}{2}L_{\Delta}(1)\tr_{F}(0,0)z\right).
	\end{align*}
	For $N = 1$ and $F = J =j -744$ (with $\tr_{J}(0,0) = 4$ and $\tr_{J}^{c}(-1/4,1) = 2$) we obtain the theorem in the introduction. Note that the relations $S^{4} = 1, (ST)^{6} = 1$ and $\chi(T) = 1$ imply that $\chi = 1$ for $N = 1$.
\end{example}

\bibliography{references}{}
\bibliographystyle{plain}

\end{document}